\newtheorem{Definition}{Definition}[section]
\newtheorem{Theorem}[Definition]{Theorem}
\newtheorem{Lemma}[Definition]{Lemma}
\newtheorem*{Claim*}{Claim}
\numberwithin{equation}{section}
\begin{document}

\title{$L^2$-solutions to stochastic reaction-diffusion equations with superlinear drifts  driven by space-time white noise}

\author{Shijie Shang$^{1}$, Pengyu Wang$^{1}$,
		Tusheng Zhang$^{1,2}$}
	\footnotetext[1]{\, School of Mathematics, University of Science and Technology of China, Hefei, China. Email: sjshang@ustc.edu.cn (Shijie Shang), wpy5016@mail.ustc.edu.cn (Pengyu Wang).}
	\footnotetext[2]{\, Department of Mathematics, University of Manchester, Manchester M13 9PL, United Kingdom. Email: tusheng.zhang@manchester.ac.uk }

\date{\today}

\maketitle

\begin{abstract}
\noindent
Consider the following stochastic reaction-diffusion with logarithmic superlinear coefficient $b$ driven by space-time white noise $W$,
\begin{align*}
\begin{cases}
    \partial_t u(t,x) = \frac{1}{2}\partial_{xx} u(t,x) + b(u(t,x)) + \sigma(u(t,x))W(dt,dx), \quad t >0,\ x\in [0,1].\\
    u(0,x) = u_0(x), \quad x \in [0,1],
\end{cases}
\end{align*}
where the initial value $u_0 \in L^2([0,1])$.
In this paper, we establish the existence and uniqueness of probabilistically strong solutions in $C(\mathbb{R}_{+},L^2([0,1]))$ to this equation. Our result not only resolves a recent problem posed in [Ann. Probab. 47 (2019), no. 1, 519-559], but also provides an alternative proof of the non-blowup of $L^2([0,1])$ solutions obtained in [Ann. Probab. 47 (2019), no. 1, 519–559]. Our approach crucially exploits some new Gronwall-type inequalities that we derive. Moreover, because of the nature of the nonlinearity, we are forced to work with the first order moment of the solutions, which requires precise first order moment estimates of the stochastic convolution. \\[3mm]

\noindent\textbf{Keywords:} Stochastic reaction-diffusion equations; superlinear drift; logarithmic nonlinearity; space-time white noise;
stochastic convolution
\end{abstract}
\section{Introduction}
Consider the following stochastic partial differential equation(SPDE):
\begin{align}\label{eq5}
\begin{cases}
    \partial_t u(t,x) = \frac{1}{2}\partial_{xx} u(t,x) + b(u(t,x)) + \sigma(u(t,x))W(dt,dx).\\
    u(t,0) = u(t,1) = 0.\\
    u(0,x) = u_0(x).
\end{cases}
\end{align}
where $t\geq0$, $ x\in [0,1]$ and $W$ is the space-time white noise on $\mathbb{R}_+ \times [0,1]$ defined on some filtrated probability space $(\Omega,\mathcal{F},\{\mathcal{F}_t\}_{t \geq 0},\mathbb{P})$ with $\mathcal{F}_t$ satisfying the usual conditions.

In this paper, assuming $u_0\in L^2([0, 1])$ we are concerned with the $L^2$-well-posedness of the SPDE (\ref{eq5}) with superlinear coefficient $b$ which are generally not dissipative and have growth like $|z|log|z|$ at infinity.

It is well known that equation \eqref{eq5} admits a unique global solution when the coefficients $b$ and $\sigma$ meet the usual Lipschitz condition. There are many papers in the literature that discuss stochastic partial differential equations with locally Lipschitz coefficients that have polynomial growth and satisfy certain monotonicity (or dissipative) conditions, see e.g. \cite{MR1961346,MR1207304,MR3410409}. However, to be able to use certain localization procedure, one has to consider the solution in a $L^{\infty}$-space with an initial value that is continuous and bounded.


If the drift $b(z)$ in \eqref{eq5} grows  faster than $|z|\log|z|$ as $z \to \infty$, then a blow-up can occur. In particular, if $b(z) = z^{1+\epsilon}$, and $\sigma = 0$ (the deterministic setting),
then equation (1.1) has blow-up solutions, but nontrivial global stationary solutions may also exist. There are many articles on this topic, see e.g. \cite{MR214914,MR1056055,MR1115140}. In \cite{MR2516340}, Bonder and Groisman studied the stochastic setting and proved that for \eqref{eq5} if $u_0 \geq 0$ is continuous, $b\geq 0$ is convex, and $\sigma$ is constant, then the Osgood condition $\int^{\infty}\frac{1}{b(z)}dz < \infty$ results in a blow-up in finite time with probability 1.
In particular, if $b(z)$ grows faster than $C|z|(\log|z|)^{1 + \epsilon}$ for some $\epsilon > 0$ as $|z| \to \infty$,
then the introduction of any amount of additive space–time white noise to equation (1.1) results in a blow-up.
So, the critical case is that the drift function $b$ grows as fast as $|u|\log|u|$ when $|u| \to \infty$. A typical example of the critical case is $b(u) = u\log|u|$.
Several papers also investigated the connections between the Osgood condition and the finite time blow-up,
see e.g. \cite{MR4177371, MR4797376, MR4904500, MR4607649, MR4486234, MR4491446,salins2025global}. In particular, in \cite{MR4177371}, the authors proved that if $b$ is nonnegative, increasing and locally Lipschitz, $\sigma$ is bounded and locally Lipschitz, the initial value $u_0$ is nonnegative and continuous, then the solutions to equation \eqref{eq5} blow up in finite time with positive probability implies that $b$ satisfies the Osgood condition. However, in all these papers, the initial value $u_0$ is assumed to be continuous or bounded, in contrast to the case $u_0 \in L^2([0,1])$ that we study in this paper.

In addition, logarithmic nonlinearity $b(z) = z\log|z|$ also appears widely in other fields. It has been introduced in the study of nonlinear wave mechanics and a relativistic field in physics,
and represents an important type of nonlinearity with a distinctive physical background.
The logarithmic wave mechanics and the logarithmic Schr\"{o}dinger equations have been studied by many authors. We refer the reader to \cite{MR426670,MR3813596,rosen1969dilatation} and the references therein for more details. The logarithmic parabolic equations have also been widely studied, see \cite{MR2425752,MR3745304,MR3263450,MR3327559} and references therein. In particular, in \cite{MR2425752} (see also \cite{MR2264255} for related works), the logarithmic parabolic (elliptic) equations were studied to understand the Ricci flow.

For the well-posedness of the stochastic reaction-diffusion equation \eqref{eq5} with logarithmic superlinear drift, in \cite{MR3909975} the authors obtained two main results. One is that when $|b(z)| {=} O(|z|\log|z|)$, $\sigma$ is bounded and $u_0 \in L^2([0,1])$, if $L^2$-valued solutions exist, then the solutions will not blow up, but the well-posedness of the solutions in $L^2([0,1])$ is left open. The other one is that if $b$ and $\sigma$ are locally Lipschitz, $|b(z)| {=} O(|z|\log|z|)$ and $|\sigma(z)| {=} o(|z|(\log|z|)^{\frac{1}{4}})$, then the solution is globally well-posed in $C(\mathbb{R}_+{\times}[0,1],\mathbb{R})$.  In \cite{MR4362370}, the authors considered the equation \eqref{eq5} driven by a Brownian motion on a bounded domain of $\mathbb{R}^d$, they established the global well-posedness of $L^2$-valued solutions when $b(z){=}z\log|z|$, $\sigma$ is locally-log-Lipschitz and $|\sigma(z)| {=} O(|z|(\log|z|)^{\frac{1}{2}})$. In \cite{MR4674636}, the authors considered equation \eqref{eq5} on the whole line, they established the global well-posedness of the solutions in $C(\mathbb{R}_{+},C_{tem})$ with initial data in $C_{tem}$ (tempered continuous functions) when $b$ is locally-log-Lipshitz and $|b(z)| {=} O(|z|\log|z|)$, $\sigma$ is Lipshitz and bounded. In \cite{MR4491446}, the authors studied the equation \eqref{eq5} on a bounded domain with bounded initial values. 
In \cite{MR4904500}, the authors also considered equation \eqref{eq5} on the whole line, they proved that if $u_0 \in L^{\infty}(\mathbb{R})\cap L^{p}(\mathbb{R})$ for $p \geq 6$, $b$ and $\sigma$ are locally Lipschitz with $b(0){=}\sigma(0){=}0$, and $|b(z)| \leq |z|\log|z|$, $|\sigma(z)| \leq |z|(\log|z|)^{\frac{1}{4}}(\log\log|z|)^{-\frac{1}{2}}$ for big $|z|$, then there exists a unique global solution. In \cite{salins2025global}, the author considered equation \eqref{eq5} on a bounded domain, they established the existence and uniqueness of global solutions under two sets of assumptions on the coefficients $b$ and $\sigma$ and assume that the initial value $u_0$ is continuous and periodic. In particular, they allowed $\sigma$ to be polynomially growing up to $|z|^{\frac{3}{2}}$. In \cite{chen2025stochasticheatequationnonlocally}, the authors considered equation \eqref{eq5} on the torus, they established  the well-posedness of positive solutions to \eqref{eq5} when the the initial value $ u_0$ is H\"older continuous and uniformly bounded away from below and above, $|b(z)| \leq |z|(\log(1/z))^{A_1}$ and $|\sigma(z)| \leq |z|(\log(1/z))^{A_2}$ with $A_1 \in (0,1)$ and $A_2 \in (0,\frac{1}{4})$ as $z$ approaches $0_{+}$, $b$ and $\sigma$ are Lipschitz continuous away from $0$. For other works related to \eqref{eq5} with logarithmic nonlinearity, the reader may refer to \cite{MR4607649,han2024supportsolutionsnonlinearstochastic,MR4881026,MR4866619,MR4805051} and references therein.

In this article, we consider the SPDE \eqref{eq5} in the $L^2$ setting with the initial value $u_0 \in L^2([0,1])$ and a driving space-time white noise. We assume that the coefficient $b$ is locally log-Lipschitz (see \eqref{wang4}) and $|b(z)| = O(|z|\log|z|)$ at infinity, the coefficient $\sigma$ is Lipschitz and of sublinear growth. Under these conditions, we obtain the global well-posedness of solutions to equation \eqref{eq5} in $L^2([0,1])$, thereby resolve the problem left open in \cite{MR3909975}. Moreover, our proof provides an alternative approach to demonstrate the non-blowup of solutions to \eqref{eq5} obtained in \cite{MR3909975}. Since equation \eqref{eq5} is driven by multiplicative space-time white noise, the It\^o's formula is not valid and therefore we can not apply the techniques used in \cite{MR4362370}. Also in the $L^2$-setting, the usual localization techniques of truncating the coefficients are not applicable. Furthermore, due to the nature of logarithmic nonlinearity, to establish uniqueness, we are forced to work with the first order moment estimates for the solutions, which is harder to handle. To address the aftermetioned problems, we derive some new types of Gronwall's inequalities and carry out delicate analysis for the logarithmic nonlinear terms, thereby successfully establishing a priori estimates for the solutions.  Meanwhile, to prove the pathwise uniquenss, we establish precise lower order moment estimates for the stochastic convolution with respect to space-time white noise.

The rest of the paper is organized as follows. In Section 2, we present the framework for equation \eqref{eq5}, give the hypothesis and state the main results. In Section 3, we will give several Gronwall-type inequalities and prove some estimates associated with the heat kernel of the Laplacian operator. In Section 4, we establish lower order moment estimates of the stochastic convolution with respect to space-time white noise and establish a priori estimates of the solutions to equation \eqref{eq5}. In Section 5, we prove the existence of probabilistically weak solutions to equation \eqref{eq5}. Section 6 is devoted to the proof of the pathwise uniqueness of solutions to \eqref{eq5}. In Appendix, we prove a convergence theorem for stochastic integrals with respect to space-time white noise.

Convention on constants. Throughout the paper, $C$ denotes a generic positive constant whose value may change from line to line. Other constants will be denoted by $C_1$, $C_2$, etc. They are all positive and their precise values are not important. The dependence of constants on parameters if needed will be indicated, e.g. $C_T$, $C_p$.
\newpage
\section{Statement of main results}
Recall the following definition.

\begin{Definition}
    An $L^2([0,1])$ valued random field solution to equation \eqref{eq5} is a jointly measurable and adapted space-time process $u := $\{$u(t,x) : (t,x) \in \mathbb{R}_{+} \times [0,1]$\} such that $u(t) \in L^2([0,1])$ $\mathbb{P}$-a.s. $\forall\ t \geq 0$ and
\end{Definition}
\begin{align}\label{eq112}
\begin{aligned}
    u(t,x) &= P_{t}u_{0}(x) + \int_{0}^{t}\int_{0}^{1}p_{t-s}(x,y)b(u(s,y))dyds \\&+ \int_{0}^{t}\int_{0}^{1}p_{t-s}
(x,y)\sigma(u(s,y))W(ds,dy),  \quad \text{a.s.} \ x\in [0,1],
\end{aligned}
\end{align}
where
\begin{align}\label{eq102}
p_{t}(x,y) := \sum_{n = 1}^{\infty}e^{-\frac{1}{2}n^2\pi^{2}t}e_{n}(x)e_{n}(y),
\end{align}
with $e_n(x) = \sqrt{2}\sin(n\pi x)$, which constitutes a set of orthonormal basis in $L^2([0,1])$, and \{$P_{t}$\}$_{t\geq0}$ is the corresponding heat semigroup in $[0,1]$ with the Dirichlet boundary condition defined by $P_tf(x) = \int_{0}^1p_{t}(x,y)f(y)dy$.
\vskip 0.3cm

\noindent\textbf{Remark 2.2.} The above mild formulation is equivalent to the weak (in the sense of partial differential equations) formulation of the stochastic reaction-diffusion equations.
\vskip 0.3cm
Next, we introduce the following conditions for the nonlinear term $b$ and $\sigma$. Set $\log_{+}(u) := \log(1\vee u)$ for any $u\geq 0.$
\\(H1). $b$ is continuous, and there exist two nonnegative constants $c_{1}$ and $c_{2}$ such that for any $u \in \mathbb{R},$
\begin{align}
    |b(u)|\leq c_{1}|u|\log_{+}|u| + c_{2}.
\end{align}
(H2). There exist nonnegative constants $c_{3}$, $c_{4}$, $c_{5}$ such that for any $u,v \in \mathbb{R},$
\begin{align}\label{wang4}
    |b(u) - b(v)| \leq c_{3}|u-v|\log_{+}\left(\frac{1}{|u-v|}\right) + c_{4}\log_{+}(|u|\vee |v|)|u-v| + c_{5}|u-v|.
\end{align}
(H3). $\sigma$ is continuous, and there exist two nonnegative constants $d_{1}$, $d_{2}$  and $\theta \in [0,1)$ such that for any $u \in \mathbb{R},$
\begin{align}
    |\sigma(u)| \leq d_{1}|u|^{\theta} + d_{2}.
\end{align}
(H4). There exists a nonnegative constant $d_{3}$ such that for any $u,v \in \mathbb{R},$
\begin{align}
    |\sigma(u) - \sigma(v)| \leq d_{3}|u-v|.
\end{align}
\textbf{Remark 2.3.} Note that condition (H2) implies condition (H1). A typical example that satisfies (H2) is $b(z) = z\log|z|$. For the global existence of the solution, only the condition (H1) is needed. The condition (H2) is used for the uniqueness of the solution.
\vskip 0.3cm
Now we can state the main results of this paper.
\setcounter{Definition}{3}
\setcounter{section}{2}
\begin{Theorem}
Assume that $u_0 \in L^2([0,1])$ and (H1) and (H3) are satisfied. Then for any $T  >0$ there exists a probabilistically weak solution to \eqref{eq5} with sample paths a.s. in $C([0,T],L^2([0,1]))$.
\end{Theorem}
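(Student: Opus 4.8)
The plan is to construct a probabilistically weak solution via an approximation–compactness argument. First I would truncate the drift: for each $n\ge 1$ define $b_n := b\circ \rho_n$ where $\rho_n$ is a smooth cutoff at level $n$ (or simply $b_n(z) := b(z)\mathbf{1}_{|z|\le n} + b(\mathrm{sgn}(z)n)\mathbf{1}_{|z|>n}$), so that each $b_n$ is bounded and continuous; together with $\sigma$, which already has sublinear growth by (H3), the equation
\begin{align*}
u_n(t,x) = P_t u_0(x) + \int_0^t\!\!\int_0^1 p_{t-s}(x,y)b_n(u_n(s,y))\,dy\,ds + \int_0^t\!\!\int_0^1 p_{t-s}(x,y)\sigma(u_n(s,y))\,W(ds,dy)
\end{align*}
admits, by a standard Picard/fixed-point scheme (bounded drift, sublinear diffusion), a solution $u_n$ with paths in $C([0,T],L^2([0,1]))$ on the original probability space. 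The crucial point is that the cutoff drift still satisfies the \emph{same} growth bound (H1) uniformly in $n$: $|b_n(z)| \le c_1|z|\log_+|z| + c_2$. This is what allows a uniform a priori bound.

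Next I would derive, using the a priori estimates established in Section 4 of the paper, a bound on $\sup_n \mathbb{E}\big[\sup_{t\le T}\|u_n(t)\|_{L^2([0,1])}\big]$ (or the corresponding first-order moment quantity the paper works with), which is uniform in $n$ precisely because the estimate only uses (H1) and (H3), not the cutoff. Here the mild formulation is split into the deterministic heat flow $P_tu_0$ (controlled by $\|u_0\|_{L^2}$), the drift convolution (controlled via the heat-kernel estimates of Section 3 and the new Gronwall-type inequalities, absorbing the $|u|\log_+|u|$ term), and the stochastic convolution (controlled by the lower-order moment estimates of Section 4 together with the sublinear growth of $\sigma$). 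From these moment bounds plus the regularizing action of the heat semigroup one obtains tightness: I would establish, again uniformly in $n$, a Kolmogorov-type modulus-of-continuity estimate for $u_n$ as a process with values in a space slightly weaker than $L^2$ (e.g. $H^{-\delta}$ or $L^2$ with a compact embedding argument via fractional Sobolev spaces), so that the laws $\{\mathcal{L}(u_n)\}$ are tight on $C([0,T],L^2([0,1]))$ — or at least on $C([0,T],H^{-\delta})\cap L^2([0,T]\times[0,1])$, from which one upgrades.

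Then I would invoke Prokhorov and Skorokhod: along a subsequence, $\mathcal{L}(u_n)$ converges weakly, and on a new probability space $(\tilde\Omega,\tilde{\mathcal F},\tilde{\mathbb P})$ we have copies $\tilde u_n \to \tilde u$ a.s. in the path space, together with the corresponding white noises $\tilde W_n$. One then passes to the limit in the mild equation: the linear term and the drift term converge by continuity of $b$ (using the uniform moment bound to get uniform integrability, so that $b_n(\tilde u_n)\to b(\tilde u)$ in the appropriate sense and the cutoff disappears since $|\tilde u_n|$ stays finite), while the stochastic-convolution term is handled by the convergence theorem for stochastic integrals with respect to space-time white noise proved in the Appendix. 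Identifying the limit $\tilde u$ as a solution to \eqref{eq5} with the limiting white noise $\tilde W$, and checking the adaptedness/measurability requirements of the definition, completes the construction of a probabilistically weak solution with paths a.s. in $C([0,T],L^2([0,1]))$.

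The main obstacle I expect is the uniform a priori estimate in the presence of the superlinear logarithmic drift: a naive Gronwall argument on $\mathbb{E}\|u_n(t)\|_{L^2}^2$ fails because the nonlinearity $|u|\log_+|u|$ is not dominated by $\|u\|_{L^2}^2$ in a way that closes linearly, and with multiplicative space-time white noise Itô's formula in $L^2$ is unavailable. This is exactly why the paper is "forced to work with the first order moment of the solutions" and needs the new Gronwall-type inequalities of Section 3 — the delicate point is to set up the estimate so that a quantity like $\mathbb{E}\log_+\|u_n(t)\|_{L^2}$ (or $\mathbb{E}\|u_n(t)\|_{L^2}$ against a logarithmically-corrected Gronwall lemma) satisfies a closed inequality, using the precise first-order moment estimate of the stochastic convolution to control the noise contribution. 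Once that a priori bound is in hand, tightness and the passage to the limit are comparatively routine.
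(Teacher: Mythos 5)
Your outline reproduces the paper's overall strategy (approximate the drift, prove a uniform moment bound, tightness, Prokhorov--Skorokhod, pass to the limit with the Appendix convergence theorem), but the first step fails as written. With $b_n$ merely bounded and continuous and $\sigma$ only continuous of sublinear growth under (H3), a ``standard Picard/fixed-point scheme'' does not produce $u_n$: Picard iteration needs Lipschitz coefficients, and boundedness of the drift gives no contraction; in fact existence for bounded \emph{continuous} coefficients already requires a compactness/weak-solution argument, which would make your construction circular. The paper instead defines $b_n$ as a mollification of $b$ multiplied by a smooth cutoff $\eta_n$, so that each $b_n$ is globally Lipschitz, satisfies the growth bound $|b_n(z)|\le c_1|z|\log_{+}|z|+L_b'(|z|+1)$ uniformly in $n$, and has the stability property $b_n(x_n)\to b(x)$ whenever $x_n\to x$ --- the latter being exactly what is used, together with Vitali's theorem, to remove the approximation in the limit. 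Your truncation $b(z)\mathbf{1}_{\{|z|\le n\}}+b(\mathrm{sgn}(z)n)\mathbf{1}_{\{|z|>n\}}$ is not Lipschitz unless $b$ is locally Lipschitz, which (H1) does not give.

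Two further steps are glossed over in a way that hides real work. First, the uniform a priori bound is not available ``for any $T$'' in one shot: the pathwise Gronwall-type lemma yields $\|u_n(t)\|_{L^2}\le C\bigl(1+c_2+\|u_0\|_{L^2}+\sup_{s\le t}I_2(s)\bigr)^{e^{\widetilde{C}t}}$, so taking $p$-th moments forces the stochastic convolution to appear with the exponent $p\,e^{\widetilde{C}t}$, and closing the estimate through the sublinear growth of $\sigma$ requires $\theta e^{\widetilde{C}t}\le 1$; this only works on the short interval $[0,T_0]$ with $T_0=\min\{1,\widetilde{C}^{-1}\log(1/\theta)\}$, and the bound on all of $[0,T]$ is then obtained by restarting the equation at $T_0,2T_0,\dots$ (Lemma 5.1), a step absent from your proposal. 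Second, tightness only in $C([0,T],H^{-\delta})$ or an $L^2$-in-time space is not sufficient as stated: the limit passage for the drift term uses $\sup_{t\le T}\|\tilde u_n-\tilde u\|_{L^2}\to 0$ together with uniform integrability of $\|p_{t-s}(x,\cdot)\|_{L^2_x}|b_n(\tilde u_n)|$, so you would need an additional argument to upgrade the topology. The paper obtains tightness directly in $C([0,T],L^2([0,1]))$ by proving a uniform spatial H\"older bound for the drift convolution (then using the compact embedding $C^{\alpha}([0,1])\hookrightarrow L^2([0,1])$) and by the factorization method, i.e. compactness of $J^{\alpha-1}:L^p([0,T],L^2)\to C([0,T],L^2)$, for the stochastic convolution; some such mechanism must replace your sketched Kolmogorov-in-$H^{-\delta}$ argument. (Also, the ``first order moment'' estimates you invoke are what the paper needs for uniqueness; the existence proof uses the $p>8$ moment bound of Lemma 4.1.)
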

\begin{Theorem}
Assume that $u_0 \in L^2([0,1])$ and (H2) and (H4) are satisfied. Then for any $T > 0$, the pathwise uniqueness holds for solutions of \eqref{eq5} in $C([0,T],L^2([0,1]))$.
\end{Theorem}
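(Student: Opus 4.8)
\noindent\emph{Proof plan.}
Let $u$ and $v$ be two solutions of \eqref{eq5} in $C([0,T],L^2([0,1]))$ driven by the same space--time white noise $W$ and with the same initial value $u_0$, and put $w:=u-v$; the goal is $w\equiv 0$. Since $u_0\in L^2([0,1])$ and $u,v$ have continuous $L^2$-valued paths, the stopping times
\[
 \tau_N:=\inf\Big\{t\ge 0:\ \|u(t)\|_{L^2([0,1])}\vee\|v(t)\|_{L^2([0,1])}>N\Big\}\wedge T
\]
increase to $T$ a.s.; I would also include in $\tau_N$, if needed, the first exit times of the auxiliary quantities (stochastic convolutions, parabolic-smoothing norms) that are shown to be a.s.\ finite by the a priori estimates of Section~4, so that on $[0,\tau_N]$ one controls $u,v$ in all the norms actually used below. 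It then suffices to prove $w(\cdot\wedge\tau_N)\equiv 0$ for every fixed $N$.

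Subtracting the two mild formulations \eqref{eq112}, the initial term and the deterministic semigroup part cancel, and $w$ satisfies a mild equation whose drift is the heat-kernel convolution of $b(u(\cdot))-b(v(\cdot))$ and whose noise term is the stochastic convolution of $\sigma(u(\cdot))-\sigma(v(\cdot))$. I would then set up an integral inequality for a \emph{first- (or low-) order} moment of $w$ --- say $\Phi_N(t):=\mathbb{E}\big[\sup_{r\le t\wedge\tau_N}\|w(r)\|_{L^2([0,1])}\big]$, or, if the cross term below requires it, a time-weighted moment built on the parabolic smoothing of the equation for $t>0$. For the drift one uses the $L^2$-contractivity of $\{P_t\}$, so its $L^2$-norm is at most $\int_0^t\|b(u(s))-b(v(s))\|_{L^2}\,ds$; for the noise one applies the factorization method with a small exponent $\alpha$, reducing the $L^2$-norm of the stochastic convolution to $\int_0^t(t-s)^{\alpha-1}\|Y(s)\|_{L^2}\,ds$ with $Y$ a fractional stochastic convolution, and then bounds a first-order moment of $\|Y(s)\|_{L^2}$ linearly in $\Phi_N$ using the precise lower-order moment estimates for stochastic convolutions from Section~4 together with the Lipschitz assumption (H4). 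This produces an inequality of the shape
\[
 \Phi_N(t)\ \le\ C\int_0^t\mathbb{E}\big[\mathbf{1}_{\{s\le\tau_N\}}\,\|b(u(s))-b(v(s))\|_{L^2}\big]\,ds\ +\ C\int_0^t(t-s)^{\alpha-1}\Phi_N(s)\,ds .
\]

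The core of the matter is the drift term, estimated through (H2). The summand $c_5|w|$ is linear; the summand $c_4\log_{+}(|u|\vee|v|)\,|w|$ is handled by the sublinear bound $\log_{+}r\le r$, H\"older's inequality, and the a priori control of $u,v$ on $[0,\tau_N]$, contributing again a term linear in $\Phi_N$ (this is precisely the place that may force a time-weighted norm, since on $[0,1]$ a bare $L^2$ bound on $u,v$ need not control $\log_{+}(|u|\vee|v|)|w|$ in $L^2$). For the genuinely non-Lipschitz summand $c_3|w|\log_{+}(1/|w|)$ I would exploit the concavity of $\phi(r):=(r\wedge e^{-1})\log_{+}\!\big(1/(r\wedge e^{-1})\big)$: splitting the spatial integral according to $|w(s,y)|\le e^{-1}$ and applying Jensen's inequality in the space variable and then in $\omega$ yields
\[
 \mathbb{E}\big[\mathbf{1}_{\{s\le\tau_N\}}\,\|b(u(s))-b(v(s))\|\big]\ \le\ C\,\Phi_N(s)\ +\ C\,\kappa\big(\Phi_N(s)\big),
\]
with $\kappa$ a concave modulus obeying the Osgood condition $\int_{0+}dr/\kappa(r)=\infty$ (essentially $\kappa(r)\asymp r\log_{+}(1/r)$). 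Inserting this back, one arrives at $\Phi_N(t)\le C\int_0^t\big(\Phi_N(s)+\kappa(\Phi_N(s))\big)\,ds+C\int_0^t(t-s)^{\alpha-1}\Phi_N(s)\,ds$. Since $\Phi_N$ is bounded, continuous, and vanishes at $t=0$, the Gronwall/Bihari-type inequalities of Section~3 --- which handle simultaneously a linear term, an Osgood modulus and a weakly singular kernel --- force $\Phi_N\equiv 0$, hence $w(\cdot\wedge\tau_N)\equiv 0$; letting $N\to\infty$ gives $u=v$ in $C([0,T],L^2([0,1]))$.

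I expect the two genuine obstacles to be: first, producing a first- (or fractional-) order moment bound for the stochastic convolution that is \emph{linear} in $\Phi_N$ and carries only an integrable time singularity --- a direct Burkholder--Davis--Gundy estimate at exponent one is too lossy, which is why the refined moment estimates of Section~4 (and the stochastic-integral convergence theorem in the Appendix) are needed; and second, treating the logarithmic drift so that, after the Jensen reductions, the resulting modulus $\kappa$ still satisfies $\int_{0+}dr/\kappa=\infty$. The second point is exactly why one must work with a first-order moment: squaring would replace $r\log_{+}(1/r)$ by the convex $r^2(\log_{+}(1/r))^2$ and destroy the Osgood property. Finally, carrying out an Osgood-type conclusion in the presence of the weakly singular convolution kernel coming from the stochastic term is not classical and is one of the new Gronwall-type inequalities on which the argument rests.
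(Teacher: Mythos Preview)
Your overall architecture is essentially the paper's: one works with the first-order moment $F(r)=\mathbb{E}\big[\sup_{t\le r\wedge\tau}\|z(t)\|_{L^2}\big]$, obtains a pathwise inequality $\|z(t)\|_{L^2}\le \sup_{s\le t}J(s)+(\text{linear})+(\text{weakly singular})+(\text{Osgood})$, then applies Lemma~4.2 (the $\epsilon$-splitting for low-order moments of the stochastic convolution) to $J$ and Lemma~3.2 to conclude $F\equiv 0$. Your handling of the $c_3|w|\log_+(1/|w|)$ summand via Jensen is the right idea, though in the paper Jensen is applied to $f(r)=r(\log\frac{1}{r})^2$ evaluated at $|z|^2$ (this is what the $L^2$-norm squared produces), not to $\phi(r)=r\log_+\frac{1}{r}$ at $|z|$; since $f$ is concave and increasing only on $(0,e^{-2})$, the argument requires an additional smallness stopping time $\tau^{\delta}=\inf\{t:\|z(t)\|_{L^2}\ge\delta\}$ to keep $\frac{1}{|\Gamma|}\int_{\Gamma}|z|^2\,dy$ in that range.

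There is, however, a genuine gap in your treatment of the summand $c_4\log_+(|u|\vee|v|)\,|w|$. You propose to first invoke the $L^2$-contractivity of $P_t$ and then bound $\|\log_+(|u|\vee|v|)\,|w|\|_{L^2}$; but, as you yourself flag, this cannot be done with only $L^2$ control of $u,v,w$: any H\"older splitting forces $|w|$ into an $L^p$ with $p>2$, and replacing $\log_+r$ by $r$ makes matters worse. A time-weighted norm does not help---the obstruction is purely spatial---and Section~4 supplies no estimates beyond $L^2$ that could be loaded into $\tau_N$. The paper's remedy is to \emph{not} discard the heat kernel for this term. One keeps $\int_0^1 p_{t-s}(x,y)\log_+(|u|\vee|v|)\,|z|\,dy$, applies H\"older with respect to the sub-probability measure $p_{t-s}(x,y)\,dy$, and obtains the factor $\big\{\sup_{x}\int_0^1 p_{t-s}(x,y)\log_+^2(|u(s,y)|\vee|v(s,y)|)\,dy\big\}^{1/2}$ multiplying $\|z(s)\|_{L^2}$. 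That factor is then controlled by exactly the Jensen/concavity estimate \eqref{eq4} from the a~priori bound: it is at most $C+\tfrac14\log_+\frac{1}{t-s}+\log_+(\|u(s)\|_{L^2}+\|v(s)\|_{L^2})$, which under $\tau_N$ reduces to $C_N+\tfrac14\log_+\frac{1}{t-s}$. This is the origin of the weakly singular kernel in the final inequality and is the step your plan is missing.
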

\section{Preliminaries}
\subsection{Gronwall-type inequalities}
In this subsection, we will provide three Gronwall-type inequalities, which play an important role in this paper.

The next result can be found in \cite{MR4674636}.
\begin{Lemma}
Let $X$, $a$, $c_{1}$, $c_{2}$ be nonnegative functions on $\mathbb{R}_{+}$, and let $M$ be an increasing function with $M(0) \geq 1$. Moreover, suppose that $c_{1}$, $c_{2}$ are integrable in finite time intervals. Assume that $\forall\ t\geq0$,
\begin{align*}
X(t) + a(t) \leq M(t) + \int_{0}^{t}c_{1}(s)X(s)ds + \int_{0}^{t}c_{2}(s)X(s)\log_{+}X(s)ds,
\end{align*}
and the above integral is finite. Then for any $t\geq0$,
\begin{align*}
    X(t) + a(t) \leq M(t)^{\exp(C_{2}(t))}\exp(\exp(C_{2}(t))\int_{0}^{t}c_{1}(s)\exp(-C_{2}(s))ds),
\end{align*}
where $C_{2}(t) := \int_{0}^{t}c_{2}(s)ds$.
\end{Lemma}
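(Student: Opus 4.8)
The plan is to reduce this logarithmic Gronwall inequality to the classical linear one by passing to the logarithm of a suitable majorant; the only extra care needed is a device to handle the fact that $M$ is merely increasing and need not be differentiable. Fix $t \ge 0$ and set, for $s \in [0,t]$,
\[
Z(s) := M(t) + \int_{0}^{s} c_{1}(r) X(r)\,dr + \int_{0}^{s} c_{2}(r) X(r)\log_{+} X(r)\,dr .
\]
Because $M$ is increasing, $M(s) \le M(t)$ for $s \le t$, so the hypothesis gives $X(s) + a(s) \le Z(s)$, hence $0 \le X(s) \le Z(s)$, for all $s \in [0,t]$; note also that $Z(t)$ coincides with the right-hand side of the assumed inequality. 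Under the finiteness assumption on the integrals, $Z$ is finite, absolutely continuous and nondecreasing on $[0,t]$, with $Z(0) = M(t) \ge M(0) \ge 1$, so $Z(s) \ge 1$ throughout $[0,t]$.

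Next, since $x \mapsto x\log_{+} x$ is nondecreasing on $[0,\infty)$ and $Z(s) \ge 1$, for a.e. $s \in [0,t]$ we have
\[
Z'(s) = c_{1}(s) X(s) + c_{2}(s) X(s)\log_{+} X(s) \le c_{1}(s) Z(s) + c_{2}(s) Z(s)\log Z(s),
\]
so, writing $V(s) := \log Z(s) \ge 0$ and dividing through by $Z(s) \ge 1$, we obtain $V'(s) \le c_{1}(s) + c_{2}(s) V(s)$ for a.e. $s$, with $V(0) = \log M(t)$.

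Finally, the classical (linear) Gronwall inequality applied to $V$ on $[0,t]$ gives
\[
V(t) \le V(0)\,e^{C_{2}(t)} + e^{C_{2}(t)} \int_{0}^{t} c_{1}(s)\,e^{-C_{2}(s)}\,ds = e^{C_{2}(t)}\log M(t) + e^{C_{2}(t)} \int_{0}^{t} c_{1}(s)\,e^{-C_{2}(s)}\,ds .
\]
Exponentiating and using $X(t) + a(t) \le Z(t) = e^{V(t)}$ yields exactly
\[
X(t) + a(t) \le M(t)^{\exp(C_{2}(t))}\exp\!\Big(\exp(C_{2}(t)) \int_{0}^{t} c_{1}(s)\,e^{-C_{2}(s)}\,ds\Big),
\]
which is the claim.

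The only delicate point is the first step: one cannot differentiate $M(s) + \int_0^s(\cdots)\,dr$ directly when $M$ is only increasing, and freezing $M$ at its endpoint value $M(t)$ over $[0,t]$ is the device that circumvents this. Everything else — the monotonicity of $x \mapsto x\log_{+} x$, the bound $Z \ge 1$ that allows $\log_{+}$ to be replaced by $\log$, and the reduction to linear Gronwall via $V = \log Z$ — is routine.
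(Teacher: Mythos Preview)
Your proof is correct. The paper does not actually give a proof of this lemma; it merely states that the result can be found in \cite{MR4674636}, so there is no ``paper's proof'' to compare against. Your argument --- freezing $M$ at its terminal value $M(t)$ to obtain an absolutely continuous majorant $Z \ge 1$, passing to $V = \log Z$, and reducing to the linear Gronwall inequality --- is the standard route, and all the details (monotonicity of $x \mapsto x\log_{+}x$, the replacement of $\log_{+}$ by $\log$ once $Z \ge 1$, the a.e.\ differential inequality for $V$) are handled cleanly.
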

\begin{Lemma} Assume that $f \geq 0$ is an integrable function, and $M$ is a nonnegative increasing function, $\alpha \in [0,\frac{1}{2}]$, $c_{1}$, $c_{2}$, $c_{3}$ are nonnegative constant numbers. And $f$ satisfies the following inequality:
\begin{align*}
    f(t) \leq M(t) +  c_{1}\int_{0}^{t}f(s)ds + c_{2}\int_{0}^{t}(t-s)^{-\alpha}f(s)ds + c_{3}\int_{0}^{t}f(s)\log_{+}\frac{1}{f(s)}ds, \quad \forall\ t \geq 0.
\end{align*}
Then there exists an increasing function $C(t)$ such that for any $t \geq 0$,
\begin{align}\label{eq100}
    f(t) \leq C(t)M(t) + C(t)\int_{0}^tf(s)\log_{+}\frac{1}{f(s)}ds.
\end{align}
Furthermore, if $M \equiv 0$, then $f \equiv 0$.
\end{Lemma}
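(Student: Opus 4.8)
The plan is to first dispose of the Volterra-type term $c_2\int_0^t(t-s)^{-\alpha}f(s)\,ds$ by a standard iteration/convolution argument, absorbing it into the other terms, and then to reduce to a situation governed essentially by the logarithmic term alone, from which the claimed inequality $f(t)\le C(t)M(t)+C(t)\int_0^t f(s)\log_+\tfrac{1}{f(s)}\,ds$ follows. The key preliminary observation is that the map $r\mapsto r\log_+(1/r)$ on $[0,\infty)$ is bounded near $0$, vanishes at $0$, and is sublinear; in particular for every $\varepsilon>0$ there is a constant $K_\varepsilon$ with $r\log_+(1/r)\le \varepsilon r+K_\varepsilon$ for all $r\ge 0$. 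This lets us, whenever convenient, trade the logarithmic integral for a linear integral plus a constant, at the cost of worsening $M$ by an additive constant (which is harmless since $M$ is only required to be nonnegative and increasing, and $C(t)$ is allowed to depend on $t$).

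First I would iterate the given inequality to handle the singular kernel. Plugging the whole right-hand side back into itself once, the term $c_2\int_0^t(t-s)^{-\alpha}f(s)\,ds$ generates, among other terms, a double convolution $c_2^2\int_0^t(t-s)^{-\alpha}\int_0^s(s-r)^{-\alpha}f(r)\,dr\,ds$. Since $\alpha\le \tfrac12<1$, the Beta-integral $\int_r^t(t-s)^{-\alpha}(s-r)^{-\alpha}\,ds = B(1-\alpha,1-\alpha)(t-r)^{1-2\alpha}$ is finite, and after finitely many ($\le 2$, since $2(1-\alpha)\ge 1$) iterations the iterated singular kernel becomes bounded on $[0,T]$ for each $T$. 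The cross terms produced by the iteration are all of the form (bounded kernel) times $M$, times $\int f$, or times $\int f\log_+(1/f)$; using $r\log_+(1/r)\le r+K$ on the nested occurrences of the log term keeps everything under control. The upshot is an inequality of the form
\begin{align*}
f(t)\le \widetilde M(t) + C_T\int_0^t f(s)\,ds + c_3\int_0^t f(s)\log_+\tfrac{1}{f(s)}\,ds,\qquad t\in[0,T],
\end{align*}
where $\widetilde M(t)=C_T M(t)+C_T$ is still nonnegative and increasing on $[0,T]$, and the pure singular term has been eliminated in favour of an ordinary integral.

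Next I would absorb the ordinary linear term $C_T\int_0^t f(s)\,ds$. One clean way: set $g(t)=f(t)+\int_0^t f(s)\log_+\tfrac1{f(s)}\,ds$; then the inequality above gives $f(t)\le \widetilde M(t)+C_T\int_0^t g(s)\,ds$ after using $f\le g$, and also trivially $\int_0^t f\log_+(1/f)\le \int_0^t\bigl(\widetilde M+C_T\int_0^\cdot g\bigr)\log_+\tfrac{1}{(\cdot)}$ — but this is circular, so instead I would apply Gronwall directly to kill $C_T\int_0^t f\,ds$: treating $h(t):=c_3\int_0^t f\log_+(1/f)\,ds+\widetilde M(t)$ as a forcing term (nondecreasing in $t$), the inequality $f(t)\le h(t)+C_T\int_0^t f(s)\,ds$ yields by the classical Gronwall lemma $f(t)\le h(t)e^{C_T t}$. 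Plugging back, $f(t)\le e^{C_T t}\widetilde M(t)+c_3 e^{C_T t}\int_0^t f(s)\log_+\tfrac1{f(s)}\,ds$, which is exactly \eqref{eq100} with $C(t):=e^{C_T t}\max(C_T,c_3)$ (one checks this is increasing; on $[0,T]$ everything is fine, and since $T$ is arbitrary we get an increasing function on $\mathbb R_+$). For the final assertion, if $M\equiv 0$ then $\widetilde M(t)=C_T$ is constant; to conclude $f\equiv 0$ I would argue on a small interval $[0,\delta]$ first: since $s\mapsto s\log_+(1/s)$ has derivative $\to+\infty$ as $s\to 0^+$ it is \emph{not} Lipschitz at $0$, so a naive Gronwall fails, but this is precisely an Osgood-type situation — the function $\Phi(r)=\int_r^1\frac{ds}{s\log_+(1/s)}$ diverges as $r\to 0^+$, so the only solution of $f(t)\le C(t)\int_0^t f\log_+(1/f)$ with $f(0)=0$ is $f\equiv 0$ on a neighbourhood of $0$, and then one bootstraps along $[0,T]$.

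The main obstacle I anticipate is the last step, the uniqueness/triviality claim when $M\equiv 0$: the inequality \eqref{eq100} with $M\equiv0$ is a purely integral (not differential) inequality with a non-Lipschitz nonlinearity, so one cannot simply invoke Osgood's theorem for ODEs. The cleanest route is probably to note that $\varphi(t):=\int_0^t f(s)\log_+\tfrac1{f(s)}\,ds$ is absolutely continuous with $\varphi'(t)=f(t)\log_+\tfrac1{f(t)}\le C(t)\varphi(t)\log_+\tfrac{1}{C(t)\varphi(t)}$ wherever $C(t)\varphi(t)\le 1$ (and where $C(t)\varphi(t)>1$ one controls things even more easily since then $f\log_+(1/f)\le C\varphi$), and then an Osgood/Bihari comparison for this scalar ODE in $\varphi$, with $\varphi(0)=0$ and divergent $\int_{0^+}\tfrac{dr}{r\log_+(1/r)}$, forces $\varphi\equiv 0$ and hence $f\equiv 0$. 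Making the regions $\{C\varphi\le1\}$ versus $\{C\varphi>1\}$ patch together cleanly, and handling the $t$-dependence of $C(t)$ uniformly on compacts, is the technical heart; everything before it is routine convolution bookkeeping.
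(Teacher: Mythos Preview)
Your overall strategy --- iterate once to remove the singular kernel, apply Gronwall to absorb the linear term, then invoke an Osgood-type argument for the $M\equiv 0$ case --- is exactly the paper's approach. But there is a concrete error in how you handle the cross term coming from the iteration, and it propagates to invalidate both conclusions.

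When you iterate, the nested logarithmic term is
\[
c_2c_3\int_0^t(t-s)^{-\alpha}\Bigl(\int_0^s f(u)\log_+\tfrac{1}{f(u)}\,du\Bigr)ds.
\]
You bound the inner integrand via $r\log_+(1/r)\le r+K$, which produces an additive constant and leads to $\widetilde M(t)=C_TM(t)+C_T$. This is both unnecessary and fatal. Unnecessary, because a direct application of Fubini gives
\[
c_2c_3\int_0^t f(u)\log_+\tfrac{1}{f(u)}\Bigl(\int_u^t(t-s)^{-\alpha}ds\Bigr)du
\le \frac{c_2c_3\,t^{1-\alpha}}{1-\alpha}\int_0^t f(u)\log_+\tfrac{1}{f(u)}\,du,
\]
so the nested log term is absorbed into the \emph{same} logarithmic integral with a $t$-dependent (increasing) coefficient --- no additive constant appears. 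This is precisely what the paper does. Fatal, because with your $\widetilde M(t)=C_TM(t)+C_T$ the inequality you actually derive after Gronwall is
\[
f(t)\le e^{C_Tt}C_TM(t)+e^{C_Tt}C_T+c_3e^{C_Tt}\int_0^t f\log_+\tfrac{1}{f},
\]
which is \emph{not} of the form \eqref{eq100}: when $M\equiv 0$ the right-hand side still contains the nonzero term $e^{C_Tt}C_T$, so you have neither proved the displayed inequality nor put yourself in a position to run the Osgood argument. Indeed, in your final paragraph you quietly switch to analysing ``$f(t)\le C(t)\int_0^t f\log_+(1/f)$'', which is the statement you were supposed to prove, not the one you derived.

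Once the Fubini correction is made, everything else in your outline is fine: one iteration suffices (since $\alpha\le\tfrac12$ gives $1-2\alpha\ge 0$), the classical Gronwall step works as you describe, and your Osgood/Bihari sketch for the $M\equiv 0$ case is essentially correct (the paper simply cites an external lemma for this last step).
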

\begin{proof}
 By carrying out one iteration on the aforementioned inequality, we get
 \begin{align*}
     f(t) &\leq M(t) + c_1\int_{0}^tf(s)ds + c_2\int_{0}^t(t-s)^{-\alpha}M(s)ds \\&+ c_1c_2\int_{0}^t\int_{0}^s(t-s)^{-\alpha}f(u)duds + c_2^2\int_{0}^t\int_{0}^s(t-s)^{-\alpha}(s-u)^{-\alpha}f(u)duds \\&+ c_2c_3\int_{0}^t\int_{0}^s(t-s)^{-\alpha}f(u)\log_{+}\frac{1}{f(u)}duds + c_3\int_{0}^tf(s)\log_{+}\frac{1}{f(s)}ds.
 \end{align*}
 Using the monotonicity of $M$ and changing the order of integration gives
 \begin{align*}
     f(t) \leq C_1(t)M(t) + C_2(t)\int_{0}^tf(s)ds + C_3(t)\int_{0}^tf(s)\log_{+}\frac{1}{f(s)}ds,
 \end{align*}
 where $C_1(t)$, $C_2(t)$ and $C_3(t)$ are all positive increasing functions. By the usual Gronwall inequality,
 \begin{align*}
     f(t) \leq C_1(t)e^{C_2(t)t}M(t) + C_3(t)e^{C_2(t)t}\int_{0}^tf(s)\log_{+}\frac{1}{f(s)}ds.
 \end{align*}
 Let $C(t) = \max\left\{C_1(t)e^{C_2(t)t},C_3(t)e^{C_2(t)t}\right\}$, this proves \eqref{eq100}. In particular, when $M \equiv0$, then we have
 \begin{align*}
     f(t) \leq C(t)\int_{0}^tf(s)\log_{+}\frac{1}{f(s)}ds.
 \end{align*}
By Lemma 3.2 in \cite{MR4674636}, we get $f \equiv 0$.
 \end{proof}
The following lemma can be proven by the same method as in the proof of Lemma 3.2.
\begin{Lemma}
Let $X$ be a nonnegative function on $\mathbb{R}_{+}$, $c_1$, $c_2$, $c_3$ be positive numbers, $M$ be a nonnegative increasing function and $\alpha\in [0,\frac{1}{2}]$. Assume that for any $t \geq 0$,
\begin{align*}
X(t) \leq M(t) + c_1\int_{0}^tX(s)ds + c_2\int_{0}^tX(s)\log_{+}X(s)ds + c_3\int_{0}^t(t-s)^{-\alpha}X(s)ds,
\end{align*}
and the above integral is finite. Then for any given $T > 0$, and any $0 \leq t \leq T$,
\begin{align}\label{eq101}
    X(t) \leq \left(C_{\alpha,T}M(t) + 1\right)^{\exp(C_{\alpha,T}t)},
\end{align}
where $C_{\alpha,T}$ is a positive number depending on $\alpha$ and $T$.
\end{Lemma}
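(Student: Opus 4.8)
The plan is to mirror the proof of Lemma 3.2: iterate the inequality once so that the singular kernel $(t-s)^{-\alpha}$ disappears, then reduce the resulting inequality to the hypothesis of Lemma 3.1 and apply it. Substituting the assumed bound for $X(s)$ into the term $c_3\int_0^t(t-s)^{-\alpha}X(s)\,ds$ produces four contributions. The first, $c_3\int_0^t(t-s)^{-\alpha}M(s)\,ds$, is at most $\frac{c_3}{1-\alpha}t^{1-\alpha}M(t)$ by the monotonicity of $M$. The two mixed contributions $c_1c_3\int_0^t(t-s)^{-\alpha}\!\int_0^sX(u)\,du\,ds$ and $c_2c_3\int_0^t(t-s)^{-\alpha}\!\int_0^sX(u)\log_+X(u)\,du\,ds$ become, after interchanging the order of integration (Tonelli, using $X\ge0$), $\frac{c_1c_3}{1-\alpha}\int_0^t(t-u)^{1-\alpha}X(u)\,du$ and $\frac{c_2c_3}{1-\alpha}\int_0^t(t-u)^{1-\alpha}X(u)\log_+X(u)\,du$. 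The last, doubly singular contribution $c_3^2\int_0^t(t-s)^{-\alpha}\!\int_0^s(s-u)^{-\alpha}X(u)\,du\,ds$ equals $c_3^2B_\alpha\int_0^t(t-u)^{1-2\alpha}X(u)\,du$, where $B_\alpha:=\int_0^1r^{-\alpha}(1-r)^{-\alpha}\,dr<\infty$; this is the only place the restriction $\alpha\le\frac12$ is used, since it guarantees $1-2\alpha\ge0$ so that $(t-u)^{1-2\alpha}$ stays bounded. Bounding $(t-u)^{1-\alpha}$ and $(t-u)^{1-2\alpha}$ by $t^{1-\alpha}$ and $t^{1-2\alpha}$ and collecting terms gives
\[
X(t)\le C_1(t)M(t)+C_2(t)\int_0^tX(s)\,ds+C_3(t)\int_0^tX(s)\log_+X(s)\,ds,
\]
with $C_1,C_2,C_3$ positive and nondecreasing; all integrals are finite because the original right-hand side is assumed finite and $X\ge0$.

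Next I would fix $T>0$ and work on $[0,T]$, eliminating the plain linear term $C_2(t)\int_0^tX\,ds$ by the classical Gronwall inequality \emph{before} passing to Lemma 3.1, so as not to generate any spurious constant. Since $F(t):=C_1(t)M(t)+C_3(t)\int_0^tX(s)\log_+X(s)\,ds$ is nondecreasing (each summand is a product of nonnegative nondecreasing functions), the inequality reads $X(t)\le F(t)+C_2(T)\int_0^tX(s)\,ds$ on $[0,T]$, and Gronwall yields $X(t)\le F(t)e^{C_2(T)t}$. Replacing $e^{C_2(T)t}$, $C_1(t)$, $C_3(t)$ by their values at $T$ and setting $C_{\alpha,T}:=e^{C_2(T)T}\max\{C_1(T),C_3(T)\}$, one obtains
\[
X(t)\le C_{\alpha,T}M(t)+C_{\alpha,T}\int_0^tX(s)\log_+X(s)\,ds,\qquad 0\le t\le T.
\]

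Finally I would enlarge the free term to $C_{\alpha,T}M(t)+1$, which is nondecreasing and everywhere $\ge1$, so that
\[
X(t)\le\bigl(C_{\alpha,T}M(t)+1\bigr)+\int_0^tC_{\alpha,T}\,X(s)\log_+X(s)\,ds,
\]
and apply Lemma 3.1 with $a\equiv0$, $c_1\equiv0$, $c_2\equiv C_{\alpha,T}$, and the function $M$ there taken to be $C_{\alpha,T}M(t)+1$. Because $c_1\equiv0$, the exponential correction factor in Lemma 3.1 equals $1$, and $C_2(t)=\int_0^tc_2\,ds=C_{\alpha,T}t$, so the conclusion is precisely
\[
X(t)\le\bigl(C_{\alpha,T}M(t)+1\bigr)^{\exp(C_{\alpha,T}t)},\qquad 0\le t\le T,
\]
which is the assertion.

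The main obstacle I expect is the bookkeeping in the first step: one must verify that after exactly one iteration every kernel has become integrable, and in particular that the doubly singular term reduces to the bounded kernel $B_\alpha(t-u)^{1-2\alpha}$, which works \emph{only} because $\alpha\le\frac12$; one must also check that the standing finiteness hypothesis propagates through the interchange of integrals, where Tonelli and $X\ge0$ suffice. Once the inequality is in the three-term form displayed above, the rest is routine; the only nonobvious device is to dispose of the $\int_0^tX\,ds$ term by the classical Gronwall inequality rather than through the $c_1$-mechanism of Lemma 3.1, which is exactly what keeps the final constant in the clean form $\bigl(C_{\alpha,T}M(t)+1\bigr)^{\exp(C_{\alpha,T}t)}$.
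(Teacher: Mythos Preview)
Your proposal is correct and follows precisely the route the paper indicates: the paper gives no explicit proof of Lemma 3.3 but states that it ``can be proven by the same method as in the proof of Lemma 3.2,'' and your argument is a careful fleshing-out of that hint—one iteration to remove the singular kernel (with the Beta-function computation for the doubly singular term, which is where $\alpha\le\tfrac12$ enters), the classical Gronwall step to absorb the linear integral, and finally an application of Lemma 3.1 with $c_1\equiv0$ to obtain the exact exponent $\exp(C_{\alpha,T}t)$. The only addition beyond the Lemma 3.2 template is the closing appeal to Lemma 3.1, which is the natural endpoint here since the nonlinearity is $X\log_+X$ rather than $f\log_+(1/f)$; this is implicit in the paper's remark.
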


\subsection{Estimates associated with heat kernels}

In this subsection, we present some estimates associated with the heat kernel of the Laplacian operator, which will be used in our analysis later.
\begin{Lemma}
Let $p_{t}(x,y)$ be the heat kernel of Laplacian operator defined by \eqref{eq102}, then there exists a constant $C > 0$ such that for any $h > 0$,
\begin{align}
\int_{0}^{\infty}\int_{0}^{1}(p_{r+h}(x,z) - p_{r}(x,z))^{2}dzdr \leq Ch^{\frac{1}{2}}, \quad \forall\ x \in [0,1].
\end{align}
\end{Lemma}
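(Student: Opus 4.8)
The plan is to expand the squared difference of heat kernels using the eigenfunction expansion \eqref{eq102}, exploit the orthonormality of $\{e_n\}$ in $L^2([0,1])$ to kill the cross terms after integrating in $z$, and then carry out the time integral term-by-term. Concretely, writing $\lambda_n = \tfrac12 n^2\pi^2$, we have $p_t(x,z) = \sum_n e^{-\lambda_n t} e_n(x) e_n(z)$, so
\begin{align*}
\int_0^1 \big(p_{r+h}(x,z) - p_r(x,z)\big)^2 dz = \sum_{n=1}^\infty e_n(x)^2 \big(e^{-\lambda_n(r+h)} - e^{-\lambda_n r}\big)^2 = \sum_{n=1}^\infty e_n(x)^2 e^{-2\lambda_n r}\big(1 - e^{-\lambda_n h}\big)^2,
\end{align*}
using $\int_0^1 e_n(z)e_m(z)\,dz = \delta_{nm}$. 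Since $e_n(x)^2 = 2\sin^2(n\pi x) \le 2$ uniformly in $x$, it suffices to bound $\sum_{n\ge1} \int_0^\infty e^{-2\lambda_n r}\,dr\,(1-e^{-\lambda_n h})^2 = \sum_{n\ge1} \frac{(1-e^{-\lambda_n h})^2}{2\lambda_n}$, so the whole problem reduces to showing $\sum_{n\ge1} \frac{(1-e^{-\lambda_n h})^2}{\lambda_n} \le C h^{1/2}$ for all $h>0$.

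Next I would estimate this numerical series by splitting the sum at the natural threshold $n \sim h^{-1/2}$ (i.e. $\lambda_n h \sim 1$). For the low-frequency part $\lambda_n h \le 1$, I use the elementary bound $1 - e^{-x} \le x$, so each term is at most $\frac{(\lambda_n h)^2}{\lambda_n} = \lambda_n h^2 \asymp n^2 h^2$; summing over $n \lesssim h^{-1/2}$ gives $\asymp h^2 \cdot (h^{-1/2})^3 = h^{1/2}$. For the high-frequency part $\lambda_n h > 1$, I use $1 - e^{-x} \le 1$, so each term is at most $\frac{1}{\lambda_n} \asymp n^{-2}$; summing over $n \gtrsim h^{-1/2}$ gives $\asymp (h^{-1/2})^{-1} = h^{1/2}$. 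Both pieces are $O(h^{1/2})$, which yields the claim with a constant independent of $h$ and $x$. (Alternatively, one can compare the sum to the integral $\int_0^\infty \frac{(1-e^{-cs^2 h})^2}{cs^2}\,ds$ and rescale $s = t/\sqrt h$ to see the $h^{1/2}$ scaling directly; the discrete splitting is more self-contained.)

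I do not anticipate a serious obstacle here: the argument is essentially a clean computation once the orthonormality is used to diagonalize the integral. The only mild subtlety is making sure the interchange of summation and the $r$-integration is justified — but every term is nonnegative, so Tonelli's theorem applies without any integrability hypothesis, and the resulting series converges (its sum is $O(h^{1/2}) < \infty$). A secondary point to be careful about is that the bound must be uniform in $x \in [0,1]$, which is automatic from $e_n(x)^2 \le 2$; no pointwise control of the kernel itself is needed. Thus the main "work" is just the dyadic split of the scalar series, which is routine.
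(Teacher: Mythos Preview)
Your proposal is correct and is essentially the same argument the paper gives: the paper does not prove this lemma directly (it cites it from the literature) but immediately afterwards proves the slightly stronger version with $\sup_{x\in[0,1]}$ inside the $r$-integral, using exactly your method---orthonormality of $\{e_n\}$ to diagonalize, the uniform bound $e_n(x)^2\le 2$, the inequality $1-e^{-x}\le \min\{1,x\}$, and the split of the resulting series at $n\sim h^{-1/2}$. Nothing is missing; the Tonelli and uniformity remarks you flag are indeed the only points requiring care, and both are handled as you say.
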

The above lemma can be found in the Appendix of  \cite{dalang2025stochasticpartialdifferentialequations} (Lemma B.2.1). In fact, we can prove the following stronger estimate.
\begin{Lemma}
Let $p_t(x,y)$ be defined by \eqref{eq102}, then there exists a constant $C > 0$ such that for any $h > 0$,
\begin{align}
    \int_{0}^{\infty}\sup_{x\in[0,1]}\int_{0}^{1}(p_{r+h}(x,z) - p_{r}(x,z))^{2}dzdr \leq Ch^{\frac{1}{2}}.
\end{align}
\end{Lemma}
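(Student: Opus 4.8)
The plan is to work directly with the spectral representation \eqref{eq102}. Write $a_n := \tfrac12 n^2\pi^2$, so that $p_t(x,y) = \sum_{n\ge1} e^{-a_n t}e_n(x)e_n(y)$. First I would fix $r>0$ and $x\in[0,1]$ and apply Parseval's identity to the function $z\mapsto p_{r+h}(x,z)-p_r(x,z)$, which lies in $L^2([0,1])$ because $\sum_n e^{-2a_n t}e_n(x)^2<\infty$ for $t>0$; its $n$-th Fourier coefficient against $e_n$ is $\bigl(e^{-a_n(r+h)}-e^{-a_n r}\bigr)e_n(x)$, whence
\begin{align*}
\int_0^1\bigl(p_{r+h}(x,z)-p_r(x,z)\bigr)^2\,dz = \sum_{n=1}^\infty e^{-2a_n r}\bigl(1-e^{-a_n h}\bigr)^2 e_n(x)^2 .
\end{align*}
The crucial observation is that every summand is nonnegative and $e_n(x)^2 = 2\sin^2(n\pi x)\le 2$ for all $x$, so the supremum over $x\in[0,1]$ of the left-hand side is bounded by $2\sum_{n\ge1} e^{-2a_n r}\bigl(1-e^{-a_n h}\bigr)^2$, a quantity that no longer depends on $x$. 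This is precisely what renders passing the supremum inside harmless, and is the only place where the explicit form of the eigenfunctions enters.

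Next I would integrate in $r$ over $(0,\infty)$. By Tonelli's theorem the sum and the integral interchange, and $\int_0^\infty e^{-2a_n r}\,dr = \tfrac{1}{2a_n}$, so
\begin{align*}
\int_0^\infty\sup_{x\in[0,1]}\int_0^1\bigl(p_{r+h}(x,z)-p_r(x,z)\bigr)^2\,dz\,dr \;\le\; \sum_{n=1}^\infty \frac{\bigl(1-e^{-a_n h}\bigr)^2}{a_n}.
\end{align*}
(Note that $p_0(x,\cdot)$ is not in $L^2$, but the inner quantity is infinite only on the null set $\{r=0\}$, which does not affect the $r$-integral.) It then remains to estimate this numerical series. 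Using $1-e^{-u}\le\min(1,u)$ gives $\bigl(1-e^{-a_n h}\bigr)^2\le\min(1,a_n^2h^2)$, and I would split the sum at the index $N\asymp h^{-1/2}$ for which $a_N\asymp h^{-1}$: for $n\le N$ one bounds the term by $a_n h^2$ and uses $\sum_{n\le N}a_n\asymp N^3\asymp h^{-3/2}$, so this piece is $\lesssim h^{1/2}$; for $n>N$ one bounds the term by $a_n^{-1}\asymp n^{-2}$ and uses $\sum_{n>N}n^{-2}\asymp N^{-1}\asymp h^{1/2}$. Adding the two pieces gives the bound $Ch^{1/2}$ for $h\le1$, and for $h\ge1$ the series is trivially bounded by $\sum_n a_n^{-1}<\infty\le Ch^{1/2}$, so the estimate holds for every $h>0$.

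As for difficulties: there is no serious obstacle here, the argument being essentially an explicit computation. The only points demanding care are the interchange of the supremum with the Fourier series (resolved by positivity of the coefficients together with $\sup_x e_n(x)^2=2$) and the elementary book-keeping in the series estimate, where the cut-off $N\asymp h^{-1/2}$ must be chosen correctly and the partial sums of $n^2$ and the tails of $n^{-2}$ compared with the corresponding integrals. I would deliberately avoid the alternative ``Gaussian'' route based on $p_{r+h}-p_r=\int_r^{r+h}\partial_s p_s\,ds$ followed by Cauchy--Schwarz, since $\int_0^1(\partial_s p_s(x,z))^2\,dz$ behaves like $s^{-5/2}$ and produces a non-integrable singularity near $s=0$; the spectral computation sidesteps this entirely and yields the same exponent $\tfrac12$ as in Lemma~3.6 with the stronger placement of the supremum.
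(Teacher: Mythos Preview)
Your proposal is correct and follows essentially the same approach as the paper: both exploit the spectral expansion and orthonormality to compute the inner $L^2$-norm, use $e_n(x)^2\le 2$ to pass the supremum inside, integrate in $r$ to obtain the series $\sum_n a_n^{-1}(1-e^{-a_n h})^2$, and then estimate this series via $1-e^{-u}\le\min(1,u)$ with a cut-off at $n\asymp h^{-1/2}$. The only cosmetic difference is that the paper treats the case $h\ge 2/\pi^2$ by a direct comparison with $\sum n^{-2}$, which is exactly your ``$h\ge1$'' remark.
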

\begin{proof}
By the expression of the heat kernel and the orthonormal property of $e_n$ in $L^2([0,1])$, we have
\begin{align*}
    &\int_{0}^{\infty}\sup_{x \in [0,1]}\int_{0}^{1}(p_{r+h}(x,z) - p_{r}(x,z))^{2}dzdr \\ =& \int_{0}^{\infty}\sup_{x \in [0,1]}\int_{0}^{1}\left[\sum_{n=1}^{\infty}(e^{-\frac{1}{2}\pi^{2}n^2(r+h)}-e^{-\frac{1}{2}\pi^2n^2r})e_n(x)e_n(z)\right]^{2}dzdr\\
    =& \int_{0}^{\infty}\sup_{x \in [0,1]}\left[\sum_{n=1}^{\infty}(e_n(x))^2(e^{-\frac{1}{2}\pi^{2}n^2(r+h)} - e^{-\frac{1}{2}\pi^{2}n^2r})^{2}\right]dr \\\leq& 2\int_{0}^{\infty}\left(\sum_{n=1}^{\infty}(e^{-\frac{1}{2}\pi^{2}n^2(r+h)} - e^{-\frac{1}{2}\pi^{2}n^{2}r})^2\right)dr.
\end{align*}
Note that
\begin{align*}
    \sum_{n=1}^{\infty}(e^{-\frac{1}{2}\pi^2n^2(r+h)} - e^{-\frac{1}{2}\pi^2n^2r})^2 = \sum_{n=1}^{\infty}e^{-\pi^2n^2r}(1-e^{-\frac{1}{2}\pi^2n^2h})^2.
\end{align*}
By virtue of the following inequality:
\begin{align*}
    1 - e^{-x} \leq \min\{1, x\}, \quad \forall\ x \geq 0,
\end{align*}
we get
\begin{align*}
    2\int_{0}^{\infty}\sum_{n=1}^{\infty}e^{-\pi^2n^2r}(1 - e^{-\frac{1}{2}\pi^2n^2h})^2 dr = \frac{2}{\pi^2}\sum_{n=1}^{\infty}\frac{(1-e^{-\frac{1}{2}\pi^2n^2h})^2}{n^2} \leq \frac{2}{\pi^2}\sum_{n=1}^{\infty}\min\left\{n^{-2},\frac{1}{4}\pi^4n^2h^2\right\}.
\end{align*}
Next, we will show that there exists a constant $C > 0$, such that for any $h > 0$ the right-hand side of the above inequality can be bounded by $Ch^{\frac{1}{2}}$. The proof will be divided into two cases according to the value of $h$.\\
\textbf{Case 1}: $h \geq\frac{2}{\pi^2}$. In this case, for any $ n \geq1$, $\min\{n^{-2},\frac{1}{4}\pi^4n^2h^2\} = n^{-2}$. So
\begin{align*}
    \frac{2}{\pi^{2}}\sum_{n=1}^{\infty}\frac{1}{n^2} \leq \frac{1}{3}\leq \frac{\pi}{3\sqrt{2}}h^{\frac{1}{2}}.
\end{align*}
\textbf{Case 2}: $ 0< h< \frac{2}{\pi^{2}}$. In this case, the sum can be further divided into two parts. We have
\begin{align*}
    \frac{1}{\pi^2}\min\{n^{-2},\frac{1}{4}\pi^4n^2h^2\} &= \frac{1}{\pi^2}\sum_{n=1}^{[\frac{1}{\pi}h^{-\frac{1}{2}}]}\frac{1}{4}\pi^4n^2h^2 + \frac{1}{\pi^2}\sum_{n=[\frac{1}{\pi}h^{-\frac{1}{2}}]+1}^{\infty}n^{-2} \\&\leq \frac{1}{\pi^2}\left[\sqrt{2}\pi h^{\frac{1}{2}} + 2\int_{\frac{1}{\pi}h^{-\frac{1}{2}}}^{\infty}z^{-2}dz\right]\\
    &\leq \frac{1}{\pi^2}[\sqrt{2}\pi h^{\frac{1}{2}} + 2\pi h^{\frac{1}{2}}] \leq \frac{4}{\pi}h^{\frac{1}{2}}.
\end{align*}
So we complete the proof.
\end{proof}
\begin{Lemma}
Let $p_t(x,y)$ be the same as before. Then there exists a constant $C > 0$ such that for any $x$, $y \in [0,1]$,
\begin{align}\label{eq106}
\int_{0}^{\infty}\left(\sup_{z\in[0,1]}|p_{t}(x,z)-p_{t}(y,z)|\right)dt \leq C|x-y| + C|x-y|\log\left(\frac{1}{|x-y|}\right).
\end{align}
\end{Lemma}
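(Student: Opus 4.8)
The plan is to estimate the kernel difference termwise from the spectral expansion \eqref{eq102}, taking advantage of the fact that $|e_n(z)|\le\sqrt 2$ uniformly in $z$, so that the supremum over $z$ costs nothing, and then to reduce the time integral to an elementary numerical series. Set $\delta:=|x-y|$, which we may assume lies in $(0,1]$ (the case $x=y$ being trivial).

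\emph{Step 1 (termwise bound).} For every $t>0$ the series \eqref{eq102} converges absolutely, and $e_n(w)=\sqrt 2\sin(n\pi w)$, so
\begin{align*}
\sup_{z\in[0,1]}|p_t(x,z)-p_t(y,z)|
&\le \sum_{n=1}^{\infty}e^{-\frac12 n^2\pi^2 t}\,|e_n(x)-e_n(y)|\,\sup_{z\in[0,1]}|e_n(z)| \\
&\le 2\sum_{n=1}^{\infty}e^{-\frac12 n^2\pi^2 t}\,|\sin(n\pi x)-\sin(n\pi y)|.
\end{align*}
Using the elementary inequality $|\sin a-\sin b|\le\min\{2,|a-b|\}$ this is dominated by $2\sum_{n\ge1}e^{-\frac12 n^2\pi^2 t}\min\{2,n\pi\delta\}$, uniformly in $t$.

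\emph{Step 2 (integrate in time).} Integrating over $t\in[0,\infty)$ and interchanging sum and integral by Tonelli (all terms nonnegative), together with $\int_0^\infty e^{-\frac12 n^2\pi^2 t}\,dt=\tfrac{2}{n^2\pi^2}$, yields
\begin{align*}
\int_{0}^{\infty}\Big(\sup_{z\in[0,1]}|p_t(x,z)-p_t(y,z)|\Big)\,dt
\;\le\;\frac{4}{\pi^2}\sum_{n=1}^{\infty}\frac{\min\{2,n\pi\delta\}}{n^2}.
\end{align*}

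\emph{Step 3 (estimate the series).} If $\delta\ge 2/\pi$ every summand equals $2/n^2$ and the series is the constant $\pi^2/3$, which is $\le C\delta$ (hence $\le C\delta+C\delta\log(1/\delta)$ since $\log(1/\delta)\ge0$). If $0<\delta<2/\pi$, split the sum at $N:=\lfloor 2/(\pi\delta)\rfloor\ge1$. For $n\le N$ one has $n\pi\delta\le2$, so the minimum is $n\pi\delta$ and $\sum_{n\le N}\frac{n\pi\delta}{n^2}=\pi\delta\sum_{n\le N}\frac1n\le\pi\delta(1+\log N)\le\pi\delta\big(1+\log\tfrac1\delta\big)$, using $2/\pi<1$. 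For $n\ge N+1$ the minimum equals $2$, and since $N\ge \tfrac1{\pi\delta}$ (because $\lfloor a\rfloor\ge a/2$ for $a\ge1$), $\sum_{n>N}\frac{2}{n^2}\le\frac{2}{N}\le 2\pi\delta$. Combining the two ranges gives $\sum_n\frac{\min\{2,n\pi\delta\}}{n^2}\le\pi\delta\big(3+\log\tfrac1\delta\big)$, and multiplying by $4/\pi^2$ produces the bound $C|x-y|+C|x-y|\log(1/|x-y|)$, which is \eqref{eq106}.

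There is no genuine obstacle here: the estimate is driven entirely by $|\sin a-\sin b|\le\min\{2,|a-b|\}$ and the convergence of $\sum n^{-2}$. The only points requiring a little care are keeping the index split consistent with the two cases $\delta\ge 2/\pi$ and $\delta<2/\pi$ and verifying that $2/N\lesssim\delta$ uniformly; the logarithm arises exactly from the harmonic tail $\sum_{n\le 1/\delta}1/n$.
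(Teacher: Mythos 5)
Your proposal is correct and follows essentially the same route as the paper: bound the kernel difference termwise via $|e_n|\le\sqrt{2}$ and $|\sin(n\pi x)-\sin(n\pi y)|\le\min\{C,\,n\pi|x-y|\}$, integrate the exponential in time to reduce to $\sum_n n^{-2}\min\{C,n\pi|x-y|\}$, and split the sum at $n\sim 1/|x-y|$ so the harmonic part yields the logarithm and the tail yields the linear term. The only differences are cosmetic (your cutoff $2$ versus the paper's $1$, and your explicit two-case treatment of large $\delta$), so no further changes are needed.
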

\begin{proof}
By \eqref{eq102}, we have
\begin{align*}
    \sup_{z\in[0,1]}|p_{t}(x,z)-p_{t}(y,z)| &\leq C\sum_{n=1}^{\infty}|\sin(n\pi x) - \sin(n \pi y)|e^{-\frac{1}{2}n^2\pi^2 t} \\&\leq C\sum_{n=1}^{\infty}\min\{1,n|x-y|\}e^{-\frac{1}{2}n^2\pi ^2 t}.
\end{align*}
So
\begin{align}\label{eq103}
\begin{aligned}[b]
    \int_{0}^{\infty}\left(\sup_{z\in[0,1]}|p_{t}(x,z) - p_{t}(y,z)|\right)dt &\leq C\sum_{n=1}^{\infty}\frac{\min\{1,n|x-y|\}}{n^2}
    \\&\leq C|x-y|\sum_{n=1}^{[\frac{1}{|x-y|}]}\frac{1}{n} + C\sum_{n = 1 + [\frac{1}{|x-y|}]}^{\infty}\frac{1}{n^2}.
\end{aligned}
\end{align}
Note that
\begin{align*}
    \sum_{n = 1}^{[\frac{1}{|x-y|}]}\frac{1}{n} = 1 + \sum_{n = 1}^{[\frac{1}{|x-y|}]-1}\frac{1}{n+1} \leq 1 + \int_{1}^{[\frac{1}{|x-y|}]}\frac{1}{z}dz \leq 1 + \log\left(\frac{1}{|x-y|}\right).
\end{align*}
Thus, the first term of \eqref{eq103} can be bounded by
\begin{align}\label{eq104}
    C|x-y|\sum_{n=1}^{[\frac{1}{|x-y|}]}\frac{1}{n} \leq C|x-y| + C|x-y|\log\left(\frac{1}{|x-y|}\right) .
\end{align}
For the second term of \eqref{eq103}, we have
\begin{align}\label{eq105}
    C\sum_{n = 1 + [\frac{1}{|x-y|}]}^{\infty}\frac{1}{n^2} \leq C\int_{[\frac{1}{|x-y|}]}^{\infty}\frac{1}{z^2}dz  \leq C|x-y| .
\end{align}
Combining \eqref{eq103}-\eqref{eq105} together yields \eqref{eq106}.
\end{proof}
\section{Moment estimates}
In this section, we establish a priori estimates for solutions to equation \eqref{eq5}. To this end, we first establish the $L^2([0,1])$-norm moment estimate of stochastic convolution with respect to space-time white noise. In the sequel, for simplicity, we use the symbol $L^2$ to represent $L^2([0,1])$, and $\|\cdot\|_{L^2([0,1])}$ is denoted by $\|\cdot\|_{L^2}$. For clarity, sometimes we will use $\|\cdot\|_{L^2_x}$ to emphasize that the integral is with respect to the variable $x$.
\begin{Lemma}
    Let $\{\sigma(s,y):(s,y) \in \mathbb{R_+} \times [0,1]\}$ be a random
    field such that the following stochastic convolution with respect to space-time white noise is well defined. Let $\tau$ be a stopping time. Then for $ p > 8$ and $T > 0$, there exits a constant $C_{p,T} > 0$ such that
\begin{align}
    \mathbb{E}\left[\sup_{t\leq T \land \tau}\left\|\int_{0}^{t}\int_{0}^{1}p_{t-s}(x,y)\sigma(s,y)W(ds,dy)\right\|_{L^{2}_x}^p \right]\leq C_{p,T}\mathbb{E}\int_{0}^{T  \land  \tau}\|\sigma(s)\|_{L^2}^{p}ds.
\end{align}
\end{Lemma}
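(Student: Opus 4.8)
The plan is to prove the maximal inequality for the stochastic convolution
$$
Z(t,x) := \int_0^t\int_0^1 p_{t-s}(x,y)\sigma(s,y)\,W(ds,dy)
$$
by the standard factorization method of Da Prato--Kwapie\'n--Zabczyk, adapted to the space-time white noise setting and to the $L^2_x$-norm. First I would fix $\alpha\in(0,1/2)$ (any value strictly between $1/8$ and $1/2$ will do, chosen at the end to make the exponents work for $p>8$) and write, using the semigroup property of the heat kernel together with the elementary identity $\int_s^t (t-r)^{\alpha-1}(r-s)^{-\alpha}\,dr = \pi/\sin(\pi\alpha)$,
$$
Z(t,x) = \frac{\sin(\pi\alpha)}{\pi}\int_0^t (t-r)^{\alpha-1}\int_0^1 p_{t-r}(x,z)\,Y(r,z)\,dz\,dr,
$$
where
$$
Y(r,z) := \int_0^r\int_0^1 (r-s)^{-\alpha}p_{r-s}(z,y)\sigma(s,y)\,W(ds,dy).
$$
The plan then has two parts: (i) an $L^p(\Omega)$ bound on $\int_0^T\|Y(r)\|_{L^2_z}^p\,dr$ in terms of $\mathbb{E}\int_0^T\|\sigma(s)\|_{L^2}^p\,ds$, obtained via Burkholder--Davis--Gundy for the Walsh integral and Minkowski's integral inequality; and (ii) a deterministic estimate showing that the map $Y\mapsto Z$ given by the factorization formula is bounded from $L^p((0,T);L^2_x)$ into $C([0,T];L^2_x)$, which yields the supremum in $t$.

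For part (i), I would introduce the stopping time by replacing $\sigma(s,y)$ with $\sigma(s,y)\mathbf 1_{s\le\tau}$ throughout, which is legitimate since $\{s\le\tau\}$ is predictable; this reduces everything to $T$ in place of $T\wedge\tau$ and I will not belabor it. By BDG for the martingale $s\mapsto \int_0^s\int_0^1(r-u)^{-\alpha}p_{r-u}(z,y)\sigma(u,y)\mathbf 1_{u\le\tau}\,W(du,dy)$ and then Minkowski's inequality in $L^{p/2}$ to pull the $z$-integral inside,
$$
\mathbb{E}\|Y(r)\|_{L^2_z}^p \le C_p\,\mathbb{E}\left(\int_0^r\int_0^1\int_0^1 (r-s)^{-2\alpha}p_{r-s}(z,y)^2\,\sigma(s,y)^2\,\mathbf 1_{s\le\tau}\,dz\,dy\,ds\right)^{p/2}.
$$
Using $\int_0^1 p_{r-s}(z,y)^2\,dz = p_{2(r-s)}(y,y) \le C(r-s)^{-1/2}$ (from \eqref{eq102} and $\sum_n e^{-cn^2 t}\le C t^{-1/2}$), the kernel factor becomes $C(r-s)^{-2\alpha-1/2}$, which is integrable near $s=r$ precisely when $\alpha<1/4$. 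A further application of Minkowski/H\"older in the time variable against the finite measure $(r-s)^{-2\alpha-1/2}\,ds$ gives
$$
\mathbb{E}\|Y(r)\|_{L^2_z}^p \le C_{p,T}\int_0^r (r-s)^{-2\alpha-1/2}\,\mathbb{E}\|\sigma(s)\mathbf 1_{s\le\tau}\|_{L^2}^p\,ds,
$$
and integrating in $r\in(0,T)$ and using Fubini yields $\mathbb{E}\int_0^T\|Y(r)\|_{L^2_z}^p\,dr \le C_{p,T}\,\mathbb{E}\int_0^{T\wedge\tau}\|\sigma(s)\|_{L^2}^p\,ds$.

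For part (ii), from the factorization formula and Minkowski's inequality in $L^2_x$,
$$
\|Z(t)\|_{L^2_x} \le C\int_0^t (t-r)^{\alpha-1}\left\|\int_0^1 p_{t-r}(\cdot,z)Y(r,z)\,dz\right\|_{L^2_x}dr \le C\int_0^t (t-r)^{\alpha-1}\|Y(r)\|_{L^2_z}\,dr,
$$
since $\|P_{t-r}g\|_{L^2}\le\|g\|_{L^2}$ for the Dirichlet heat semigroup. By H\"older's inequality in $r$ with conjugate exponents $p$ and $p/(p-1)$,
$$
\sup_{t\le T}\|Z(t)\|_{L^2_x}^p \le C\left(\int_0^T r^{(\alpha-1)\frac{p}{p-1}}\,dr\right)^{p-1}\int_0^T\|Y(r)\|_{L^2_z}^p\,dr,
$$
and the $r$-integral is finite iff $(1-\alpha)\frac{p}{p-1}<1$, i.e. $\alpha > 1/p$; for $p>8$ this is compatible with the constraint $\alpha<1/4$ from part (i), so one may simply take, say, $\alpha = 1/8$. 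Taking expectations and combining with the bound from part (i) gives the claim with $C_{p,T}$ depending only on $p$ and $T$. The main obstacle is the bookkeeping of the three interlocking exponent constraints ($\alpha<1/4$ for the kernel integrability in (i), $\alpha>1/p$ for the H\"older step in (ii), and $\alpha<1/2$ throughout), and making sure the stopping-time truncation commutes cleanly with the factorization — both are routine once one commits to $\alpha=1/8$ and $p>8$, but they are where an error would most easily creep in. A minor technical point to address is justifying that $Z$ has a version with continuous $L^2_x$-valued paths, which also follows from the factorization representation since the right-hand side above is, for $\mathbb{P}$-a.e.\ $\omega$, a convolution of an $L^p$ function with an integrable kernel, hence continuous in $t$.
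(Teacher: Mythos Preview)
Your proof is correct and follows essentially the same route as the paper: the Da Prato--Kwapie\'n--Zabczyk factorization, the $L^2_x$-contractivity of $P_t$ for the outer estimate, the BDG inequality together with the bound $\int_0^1 p_{r-s}(z,y)^2\,dz \le C(r-s)^{-1/2}$ for the inner estimate, and the insertion of $\mathbf 1_{[0,\tau]}$ to handle the stopping time. The only cosmetic differences are that the paper phrases the inner BDG step via the Hilbert--Schmidt norm of $P_{r-s}\sigma(r)$ and uses a slightly different H\"older splitting at the end (leading to the marginally tighter window $\alpha\in(1/p,\,1/4-1/p)$ rather than your $\alpha\in(1/p,\,1/4)$); note also that your opening parenthetical ``any value strictly between $1/8$ and $1/2$'' should read ``between $1/p$ and $1/4$'', consistent with what you actually use.
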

\begin{proof}
Let $\{e_j\}_{j=1}^{\infty}$ be an orthonormal basis of $L^2([0,1])$. Set
\begin{align*}
    W_t^j :=\int_{0}^t\int_{0}^1e_j(y)W(ds,dy).
\end{align*}
Then $\{W^j\}_{j\geq1}$ is a family of independent standard Brownian motions. Moreover, the following equality holds:
\begin{align}\label{eq107}
    \int_{0}^t\int_{0}^1p_{t-s}(x,y)\sigma(s,y)W(ds,dy) = \sum_{j = 1}^{\infty}\int_{0}^t\left(\int_{0}^1p_{t-s}(x,y)\sigma(s,y)e_j(y)dy\right)dW_s^j.
\end{align}
Set $W_s := \sum_{j = 1}^{\infty}W_s^je_j$. Then $W$ is the corresponding cylindrical Wiener process on $L^2([0,1])$. And \eqref{eq107} can be written as
\begin{align*}
    \int_{0}^t\int_{0}^1p_{t-s}(x,y)\sigma(s,y)W(ds,dy) = \int_{0}^tP_{t-s}\sigma(s)dW_s,
\end{align*}
where $P_{t-s}\sigma(s)$ is an Hilbert-Schmidt operator from $L^2([0,1])$ to $L^2([0,1])$ defined by
\begin{align*}
    \left(P_{t-s}\sigma(s)h\right)(x) := \int_{0}^1p_{t-s}(x,y)\sigma(s,y)h(y)dy, \quad \forall \ h \in L^2([0,1]),
\end{align*}
and its Hilbert-Schmidt norm can be computed as follows:
\begin{align}\label{eq108}
\begin{aligned}[b]
    \|P_{s-r}\sigma(r)\|_{HS(L^2,L^2)}^{2} &= \sum_{n=1}^{\infty}\|P_{s-r}\sigma(r)e_{n}\|_{L^2}^2 \\&= \sum_{n=1}^{\infty}\int_{0}^{1}\left(\int_{0}^{1}p_{s-r}(x,y)\sigma(r,y)e_n(y)dy\right)^2dx \\&= \int_{0}^{1}\sum_{n=1}^{\infty}\langle p_{s-r}(x,\cdot)\sigma(r,\cdot),e_n(\cdot)\rangle_{L^2}^2dx \\&=
    \int_{0}^{1}\int_{0}^{1}|p_{s-r}(x,y)\sigma(r,y)|^2dydx.
\end{aligned}
\end{align}
The proof is based on the factorization method introduced in \cite{MR1207136} and is inspired by \cite{MR4674636}. Set
\begin{align*}
    J_{\alpha}\sigma(s) = \int_{0}^s(s-r)^{-\alpha}P_{s-r}\sigma(r)dW_r,\\
    J^{\alpha-1}f(t) = \int_{0}^t(t-s)^{\alpha-1}P_{t-s}f(s)ds.
\end{align*}
Then by the stochastic Fibini theorem we obtain,
\begin{align}\label{fact2}
    \int_{0}^{t}P_{t-s}\sigma(s)dW_{s} = \frac{\sin\alpha\pi}{\pi}J^{\alpha-1}(J_{\alpha}\sigma).
\end{align}
Take $\frac{1}{p} < \alpha < \frac{1}{4} - \frac{1}{p}$, use H\"older's inequality and the BDG inequality to get
\allowdisplaybreaks
\begin{align}\label{eq1}
  \notag  &\mathbb{E}\left[\sup_{t \leq T \land \tau}\left\|\int_{0}^{t}\int_{0}^{1}p_{t-s}(x,y)\sigma(s,x)W(ds,dy)\right\|_{L^{2}_x}^p\right]
    \\=& \notag
    \mathbb{E}\left[\sup_{t \leq T \land \tau}\left\|\int_{0}^{t}(t-s)^{\alpha-1}P_{t-s}J_{\alpha}\sigma(s)ds\right\|_{L^2}^p\right] \\
    \leq&\notag
    \mathbb{E}\left[\sup_{t \leq T \land \tau}\left\{\int_{0}^{t}(t-s)^{\alpha-1}\|P_{t-s}J_{\alpha}\sigma\|_{L^2}ds \right\}^p \right] \\\leq&\notag \mathbb{E}\left[\sup_{t \leq T \land \tau}\left\{\int_{0}^{t}(t-s)^{\alpha-1}\|J_{\alpha}\sigma\|_{L^2}ds\right\}^p\right] \\
    \leq&\notag
    \mathbb{E}\left[\sup_{t \leq T \land \tau}\left\{\left[\int_{0}^{t}(t-s)^{(\alpha-1)\frac{p}{p-1}}ds\right]^{\frac{p-1}{p}} \left[\int_{0}^{t}\|J_{\alpha}\sigma\|_{L^2}^pds\right]^{\frac{1}{p}}\right\}^{p}\right] \\ \notag
    \leq& \left(\int_{0}^{T}s^{(\alpha-1)\frac{p}{p-1}}ds\right)^{p-1}\mathbb{E}\left[\int_{0}^{T}\|J_{\alpha}\sigma(s)\|_{L^2}^p1_{[0,\tau]}(s)ds\right]  \\\leq&  \notag C_{p,T}\int_{0}^{T}\mathbb{E}\left[\left\|\int_{0}^{s}(s-r)^{-\alpha}P_{s-r}\sigma(r)\mathbf{1}_{[0,\tau]}(r)dW_{r}\right\|_{L^2}^p\mathbf{1}_{[0,\tau]}(s)\right]ds \notag\\ \leq& C_{p,T}\int_{0}^{T}\mathbb{E}\left(\int_{0}^{s}(s-r)^{-2\alpha}\|P_{t-r}\sigma(r)\|_{HS(L^2,L^2)}^2\mathbf{1}_{[0,\tau]}(r)dr\right)^{\frac{p}{2}}ds.
\end{align}
By the equality \eqref{eq108} and the following inequality,
\begin{align*}
    p_{s-r}(x,y) \leq \frac{C}{\sqrt{s-r}}, \quad  \quad \forall \ x, y \in[0,1],
\end{align*}
we have
\begin{align}\label{eq109}
    \left\|P_{t-r}\sigma(r)\right\|_{HS}^2 = \int_{0}^{1}\int_{0}^{1}|p_{s-r}(x,y)\sigma(r,y)|^2dydx  \leq C(s-r)^{-\frac{1}{2}}\|\sigma(r)\|_{L^2}^2.
\end{align}
Substituting \eqref{eq109} into \eqref{eq1} gives
\begin{align*}
    &\mathbb{E}\left[\sup_{t \leq T \land \tau}\left\|\int_{0}^{t}\int_{0}^{1}p_{t-s}(x,y)\sigma(u(t,x))W(ds,dy)\right\|_{L^{2}_x}^p\right] \\ \leq&
    C_{p,T}\int_{0}^{T}\mathbb{E}\left(\int_{0}^{s}(s-r)^{-2\alpha-\frac{1}{2}}\|\sigma(r)\|_{L^2}^2\mathbf{1}_{[0,\tau]}(r)dr\right)^{\frac{p}{2}}ds \\
    \leq& C_{p,T}\int_{0}^{T}\mathbb{E}\left\{\left[\int_{0}^{s}(s-r)^{(-2\alpha-\frac{1}{2})\frac{p}{p-2}}dr\right]^{\frac{p-2}{p}}\left[\int_{0}^{s}\|\sigma(r)\|_{L^2}^{p}\mathbf{1}_{[0,\tau]}(r)dr\right]^{\frac{2}{p}}\right\}^{\frac{p}{2}}ds  \\ \leq& C_{p,T}\int_{0}^{T}\left(\int_{0}^{s}r^{-(2\alpha+\frac{1}{2})\frac{p}{p-2}}dr\right)^{\frac{p-2}{p}}\mathbb{E}\left[\int_{0}^{s}\|\sigma(r)\|_{L^2}^p\mathbf{1}_{[0,\tau]}(r)dr\right]ds \\\leq&
    C_{p,T}\mathbb{E}\int_{0}^{T \land \tau}\|\sigma(r)\|_{L^2}^pdr.
\end{align*}
\end{proof}
Having Lemma 4.1, the proof of the following result is almost identical to that of Proposition 4.2 in \cite{MR4674636}, hence we omit its proof.
\begin{Lemma}
    Let $\{\sigma(s,y): (s,y) \in \mathbb{R}_+ \times [0,1]\}$ be a random field such that the following stochastic convolution with respect to the space-time white noise is well defined. Let $\tau$ be a stopping time. Then for any $\epsilon$, $T >0$, and $0 < p \leq 8$, there exists a constant $C_{\epsilon,p,T} > 0$ such that
\begin{align*}
    &\mathbb{E}\left[\sup_{t \leq T \land \tau}\left\|\int_{0}^t\int_{0}^1p_{t-s}(x,y)\sigma(s,y)W(ds,dy)\right\|_{L^2_x}^p\right] \\\leq &\epsilon \mathbb{E}\left[\sup_{t\leq T \land \tau}\|\sigma(s)\|_{L^2}^p\right] + C_{\epsilon,p,T}\mathbb{E}\left[\int_{0}^{T \land \tau}\|\sigma(s)\|_{L^2}^pds\right].
\end{align*}
\end{Lemma}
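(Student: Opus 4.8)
The case $p>8$ is already contained in Lemma 4.1, which in fact gives the stronger bound without the first term on the right, so the task is to push that estimate down to the range $0<p\le 8$. One cannot simply repeat the factorization proof of Lemma 4.1, because there the Hölder exponent $\alpha$ had to satisfy $\tfrac1p<\alpha<\tfrac14-\tfrac1p$, and this range is empty once $p\le 8$. My plan is instead to deduce the estimate from Lemma 4.1, used at some fixed $q>8$, by localizing in the size of the coefficient $\sigma$, along the lines of the proof of Proposition 4.2 in \cite{MR4674636}.

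Set $Z_t:=\int_0^t\!\int_0^1 p_{t-s}(x,y)\sigma(s,y)\,W(ds,dy)$ and, for $R>0$, let $\tau_R:=\inf\{t\ge 0:\|\sigma(t)\|_{L^2}\ge R\}$, which is a stopping time. I would split the expectation according to whether $\tau_R\ge T\wedge\tau$ or $\tau_R<T\wedge\tau$. On the first (``small coefficient'') event one has $\|\sigma(s)\|_{L^2}\le R$ on $[0,T\wedge\tau]$, so the convolution $Z^R$ of the stopped coefficient $\sigma(\cdot)\mathbf{1}_{[0,\tau_R\wedge\tau\wedge T]}(\cdot)$ coincides with $Z$ there; applying Lemma 4.1 at exponent $q$ to $Z^R$ and using $\|\sigma\|_{L^2}^{q}\le R^{q-p}\|\sigma\|_{L^2}^{p}$ bounds $\mathbb{E}[\sup_{t\le T}\|Z^R_t\|_{L^2}^q]$ by $C_{q,T}R^{q-p}\,\mathbb{E}\int_0^{T\wedge\tau}\|\sigma(s)\|_{L^2}^{p}\,ds$, after which Hölder's inequality in $\omega$ (to drop to the $p$-th moment) and Young's inequality give a contribution of the desired type. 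On the second (``large coefficient'') event one has $\sup_{s\le T\wedge\tau}\|\sigma(s)\|_{L^2}\ge R$, whence $\mathbb{P}(\tau_R<T\wedge\tau)\le R^{-p}\,\mathbb{E}[\sup_{s\le T\wedge\tau}\|\sigma(s)\|_{L^2}^{p}]$ by Chebyshev's inequality; combining this smallness with Lemma 4.1 and letting $R=R(\epsilon)\to\infty$ is meant to produce the term $\epsilon\,\mathbb{E}[\sup_{s\le T\wedge\tau}\|\sigma(s)\|_{L^2}^{p}]$.

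The delicate point — and where the real work of the argument in \cite{MR4674636} lies — is to carry out the two estimates above so that only the first moment $\mathbb{E}[\sup_s\|\sigma(s)\|_{L^2}^{p}]$ and the space–time moment $\mathbb{E}\int_0^{T\wedge\tau}\|\sigma(s)\|_{L^2}^{p}\,ds$, and no higher moment of $\sigma$, appear on the right: a crude application of Hölder's inequality between $\mathbb{E}[\sup_t\|Z_t\|_{L^2}^q]$ and the probability of the large‑coefficient event would bring in $\mathbb{E}\int_0^{T\wedge\tau}\|\sigma\|_{L^2}^q\,ds$, which is not controllable by the two admissible moments. I expect this to be handled by further decomposing the large‑coefficient event according to the dyadic level of $\|\sigma\|_{L^2}$ and invoking Lemma 4.1 on each level set, together (if needed) with a preliminary truncation and a Fatou‑type passage to the limit reducing matters to bounded $\sigma$; with that in place, the remaining Young/Hölder manipulations and the bookkeeping with the stopping time $\tau$ are routine. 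This level‑set bookkeeping is the step I expect to be the main obstacle.
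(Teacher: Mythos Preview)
Your plan coincides with the paper's: it omits the proof and simply invokes Proposition~4.2 of \cite{MR4674636}, exactly the reference you cite, and your diagnosis of the difficulty --- that a crude H\"older step would reintroduce the $q$-th moment $\mathbb{E}\int_0^{T\wedge\tau}\|\sigma\|_{L^2}^q\,ds$ --- is correct.

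There is, however, a genuine gap in your proposed resolution: the dyadic level-set scheme, in the direct form you describe, does not sum. On the event $E_k=\{2^k\le\sup_s\|\sigma(s)\|_{L^2}<2^{k+1}\}$ one has $X=X^{(k+1)}$; bounding $\mathbb{E}\big[(X^{(k+1)})^q\big]\le C\,2^{(k+1)(q-p)}\,\mathbb{E}\int_0^{T\wedge\tau}\|\sigma\|_{L^2}^p$ via Lemma~4.1 and $\mathbb{P}(E_k)\le 2^{-kp}\,\mathbb{E}\big[\sup_s\|\sigma\|_{L^2}^p\big]$ via Chebyshev, and then combining by H\"older with exponents $q/p$ and $q/(q-p)$, produces a level-$k$ contribution whose power of $2$ is $(q-p)\tfrac{p}{q}\cdot\big[(k+1)-k\big]=(q-p)\tfrac{p}{q}$, a constant independent of $k$. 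The series over $k$ therefore diverges. So the obstacle you flag is not mere bookkeeping; the dyadic route, as stated, fails at exactly this point.

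A clean way through is Lenglart's domination inequality. Lemma~4.1, applied with an arbitrary stopping time $\rho$ in place of $\tau$, says that $\|Z_t\|_{L^2}^q$ is Lenglart-dominated by the increasing process $G_t=C_q\int_0^t\|\sigma(s)\|_{L^2}^q\,ds$. Lenglart's inequality with exponent $p/q\in(0,1)$ then gives
\[
\mathbb{E}\Big[\sup_{t\le T\wedge\tau}\|Z_t\|_{L^2}^{p}\Big]\ \le\ C_{p,q}\,\mathbb{E}\Big[\Big(\int_0^{T\wedge\tau}\|\sigma(s)\|_{L^2}^{q}\,ds\Big)^{p/q}\Big].
\]
Now the pointwise bound $\int_0^{T\wedge\tau}\|\sigma\|_{L^2}^q\le\big(\sup_{s\le T\wedge\tau}\|\sigma(s)\|_{L^2}\big)^{q-p}\int_0^{T\wedge\tau}\|\sigma(s)\|_{L^2}^p\,ds$ together with Young's inequality with the conjugate pair $\big(\tfrac{q}{q-p},\tfrac{q}{p}\big)$ splits the right-hand side into $\epsilon\,\mathbb{E}\big[\sup_s\|\sigma(s)\|_{L^2}^p\big]+C_{\epsilon,p,T}\,\mathbb{E}\int_0^{T\wedge\tau}\|\sigma(s)\|_{L^2}^p\,ds$, and no level-set summation is required.
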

Using the above moment estimates, we can prove the following a prior estimate of solutions.
\begin{Theorem}
Assume that (H1) and (H3) are satisfied and let
\begin{align}\label{def1}
T_0 = \min\left\{1,\frac{1}{\widetilde{C}}\log\frac{1}{\theta}\right\}
\end{align}
where $\widetilde{C}$ is a positive constant which appears in \eqref{+4}. Assume $u$ is a solution to equation \eqref{eq5}, then for any $p \geq 1$ we have,
\begin{align}
    \mathbb{E}\left[\sup_{t \leq T_0}\|u(t)\|_{L^{2}}^{p}\right]  \leq C_{p}.
\end{align}
\end{Theorem}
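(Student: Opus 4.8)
The plan is to work from the mild formulation \eqref{eq112}, writing $u(t)=P_tu_0+\mathcal B(t)+\mathcal S(t)$, where $\mathcal B(t):=\int_0^tP_{t-s}b(u(s))\,ds$ is the drift convolution and $\mathcal S(t):=\int_0^t\int_0^1p_{t-s}(x,y)\sigma(u(s,y))\,W(ds,dy)$ the stochastic convolution (well defined, since (H3) forces $\sigma(u(s))\in L^2$ a.s.), and to estimate the three pieces separately after a localization. Since $u$ is assumed to have paths in $C([0,T_0],L^2)$, the stopping times $\tau_N:=\inf\{t\ge0:\|u(t)\|_{L^2}>N\}$ satisfy $T_0\wedge\tau_N\uparrow T_0$ a.s., so it suffices to bound $X_N(t):=\mathbb E\big[\sup_{s\le t\wedge\tau_N}\|u(s)\|_{L^2}^p\big]$ by a constant independent of $N$ and then let $N\to\infty$ by Fatou. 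The deterministic part is immediate: $\sup_{t\le T_0}\|P_tu_0\|_{L^2}^p\le\|u_0\|_{L^2}^p$ by contractivity of $\{P_t\}$ on $L^2$.

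The drift term is the delicate one. Under (H1) the function $b(u(s))$ need not even lie in $L^2$ when $u(s)\in L^2$, so instead of $L^2$-contractivity I would use the smoothing estimate $\|P_rg\|_{L^2}\le Cr^{-\gamma}\|g\|_{L^q}$ with $q\in[1,2)$ and $\gamma=\tfrac12(\tfrac1q-\tfrac12)\le\tfrac14$. The key analytic point is to bound $\|b(u(s))\|_{L^q}$ \emph{keeping the logarithmic growth}: from $|b(z)|\le c_1|z|\log_+|z|+c_2$ one splits $[0,1]$ into the set where $|u(s,y)|\le\|u(s)\|_{L^2}^2$, on which $\log_+|u(s,y)|\le2\log_+(e+\|u(s)\|_{L^2})$, and its complement, which has Lebesgue measure at most $\|u(s)\|_{L^2}^{-2}$ by Chebyshev and on which $\log_+|u(s,y)|\lesssim|u(s,y)|^{\varepsilon}$; combining these with Hölder's inequality on the small tail gives $\|b(u(s))\|_{L^q}\le C\|u(s)\|_{L^2}\log_+(e+\|u(s)\|_{L^2})+C$. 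Hence, pathwise,
\[
\|\mathcal B(t)\|_{L^2}\le C\int_0^t(t-s)^{-\gamma}\big(\|u(s)\|_{L^2}\log_+(e+\|u(s)\|_{L^2})+1\big)\,ds .
\]

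For the stochastic term I would invoke Lemma 4.1 (for $p>8$) and Lemma 4.2 (for $1\le p\le8$). Since (H3) and Jensen's inequality on the probability space $[0,1]$ (concavity of $r\mapsto r^{\theta}$) give $\|\sigma(u(s))\|_{L^2}^p\le C\|u(s)\|_{L^2}^{p\theta}+C$, one obtains $\mathbb E\big[\sup_{t\le T_0\wedge\tau_N}\|\mathcal S(t)\|_{L^2}^p\big]\le\varepsilon\,\mathbb E\big[\sup_{s\le T_0\wedge\tau_N}\|u(s)\|_{L^2}^{p\theta}\big]+C_{\varepsilon,p,T_0}\,\mathbb E\int_0^{T_0\wedge\tau_N}\big(\|u(s)\|_{L^2}^{p\theta}+1\big)ds$ (with the $\varepsilon$-term absent when $p>8$), and Young's inequality $r^{p\theta}\le\theta r^p+1$ controls the first term by $\varepsilon C\theta\,X_N(T_0)+\varepsilon C$, to be absorbed on the left. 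Combining the three estimates — raising to the $p$-th power, taking $\sup_{t\le T_0\wedge\tau_N}$ and then $\mathbb E$, and treating the convolution against $(t-s)^{-\gamma}$ via Young's convolution or Minkowski's integral inequality, all the while retaining the logarithmic (rather than polynomial) structure of the drift term — should lead to an integral inequality for $X_N$ amenable to Lemma 3.3, of the form $X_N(t)\le M+c_1\int_0^tX_N(s)ds+c_2\int_0^tX_N(s)\log_+X_N(s)ds+c_3\int_0^t(t-s)^{-\alpha}X_N(s)ds$ with $\alpha<\tfrac12$ and $M<\infty$ depending on $\|u_0\|_{L^2}$, $p$ and $T_0$. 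Lemma 3.3 then yields $X_N(T_0)\le(C_{\alpha,T_0}M+1)^{\exp(C_{\alpha,T_0}T_0)}$ uniformly in $N$, and letting $N\to\infty$ completes the proof.

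The main obstacle is the drift step: the heat semigroup cannot tame $b(u)$ in $L^2$, which forces the singular kernel $(t-s)^{-\gamma}$, and the superlinear logarithmic growth of $b$ must be transferred onto $\|u(s)\|_{L^2}$ \emph{without} degrading $|z|\log_+|z|$ to a genuine power $|z|^{1+\varepsilon}$ — otherwise the a priori bound, and with it the admissible time horizon, would depend on $\|u_0\|_{L^2}$. This is precisely what the level-set splitting, together with a careful accounting of the $p$-th powers against Lemma 3.3, is meant to achieve, and it is why only a short-time estimate is obtained. The precise shape $T_0=\min\{1,\frac{1}{\widetilde{C}}\log\frac{1}{\theta}\}$, and in particular its $\theta$-dependence, comes from requiring the constants generated in the absorption of the stochastic term and in the Gronwall step to be small enough for the estimate to close, the admissibility deteriorating as $\theta\uparrow1$; the truncation $T_0\le1$ merely keeps the $(t-s)^{-\gamma}$-integrals and the various powers of $T_0$ bounded.
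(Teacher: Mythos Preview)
Your drift estimate---using the $L^q\!\to\!L^2$ smoothing of $P_t$ together with a level-set splitting to obtain $\|b(u(s))\|_{L^q}\lesssim\|u(s)\|_{L^2}\log_+(e+\|u(s)\|_{L^2})+1$---is different from the paper's route (which H\"older-splits against the kernel measure $p_{t-s}(x,y)\,dy$ and then applies Jensen's inequality to the concave function $\log_+^2$; see \eqref{eq4}), but both lead to the same pathwise inequality
\[
\|u(t)\|_{L^2}\le \|u_0\|_{L^2}+\sup_{s\le t}\|\mathcal S(s)\|_{L^2}+C\!\int_0^t\!(t-s)^{-\alpha}\|u(s)\|_{L^2}\,ds
+C\!\int_0^t\!\|u(s)\|_{L^2}\log_+\|u(s)\|_{L^2}\,ds+C.
\]

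The genuine gap is the closing step. You propose to raise to the $p$-th power, take $\sup$ and $\mathbb E$, and then apply Lemma~3.3 to $X_N(t)=\mathbb E[\sup_{s\le t\wedge\tau_N}\|u(s)\|_{L^2}^p]$. That would require controlling $\mathbb E\big[\|u(s)\|_{L^2}^p\big(\log_+\|u(s)\|_{L^2}\big)^p\big]$ by $X_N(s)\log_+X_N(s)$, but the map $z\mapsto z\log_+z$ (and, for $p>1$, $z\mapsto z(\log_+z^{1/p})^p$) is \emph{convex}, so Jensen's inequality goes the wrong way; you cannot pass the logarithm outside the expectation. Using the localization bound $\log_+\|u(s)\|_{L^2}\le\log_+N$ produces an $N$-dependent constant and defeats the purpose; trading $(\log_+Y)^p\le C_\varepsilon Y^{\varepsilon p}$ raises the moment order and the argument never closes at a fixed $p$. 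So the inequality ``of the form $X_N(t)\le M+\cdots+c_2\int_0^tX_N(s)\log_+X_N(s)\,ds$'' does not follow.

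The paper's remedy is to reverse the order of operations: apply Lemma~3.3 \emph{pathwise} to the inequality above, obtaining (this is \eqref{+4})
\[
\|u(t)\|_{L^2}\le C\big(1+\|u_0\|_{L^2}+\sup_{s\le t}\|\mathcal S(s)\|_{L^2}\big)^{e^{\widetilde C t}},
\]
and only then take the $p$-th moment. The inflated exponent $pe^{\widetilde C T_0}$ on the stochastic convolution is handled by Lemma~4.1 and (H3), giving an integrand $\|u(s)\|_{L^2}^{p\theta e^{\widetilde C T_0}}$; the definition $T_0=\min\{1,\widetilde C^{-1}\log(1/\theta)\}$ is exactly the condition $\theta e^{\widetilde C T_0}\le 1$, which brings this back to $\|u(s)\|_{L^2}^p$ and closes the loop with an ordinary \emph{linear} Gronwall. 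This is also why your explanation of the $T_0$ formula is off: it does not come from ``absorption constants being small'', but from cancelling the exponent inflation $e^{\widetilde C T_0}$ produced by the pathwise log-Gronwall against the sublinearity $\theta<1$ of $\sigma$.
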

\begin{proof}
Without loss of generality, we assume $p > 8$. It follows from \eqref{eq112} that
\begin{align*}
    \|u(t)\|_{L^{2}} &\leq \|P_tu_0\|_{L^{2}} +  \left\|\int_{0}^{t}\int_{0}^{1}p_{t-s}(x,y)b(u(s,y))dsdy\right\|_{L^{2}_x} \\&+ \left\|\int_{0}^{t}\int_{0}^{1}p_{t-s}(x,y)\sigma(u(s,y))W(ds,dy)\right\|_{L^{2}_x}.
\end{align*}
We denote
\begin{align*}
    &I_1(t) = \left\|\int_{0}^{t}\int_{0}^{1}p_{t-s}(x,y)b(u(t,x))dsdy\right\|_{L^{2}_x}.\\
    &I_2(t) = \left\|\int_{0}^{t}\int_{0}^{1}p_{t-s}(x,y)\sigma(u(t,x))W(ds,dy)\right\|_{L^{2}_x}.
\end{align*}
For $I_1(t)$, we have
\begin{align}\label{eq113}
\begin{aligned}[b]
    I_1(t) &\leq \left\|\int_{0}^{t}\int_{0}^{1}p_{t-s}(x,y)(c_{1}|u(s,y)|\log_{+}(u(s,y)) + c_2)dsdy\right\|_{L^{2}_x} \\ &\leq c_1\left\|\int_{0}^{t}\int_{0}^{1}p_{t-s}(x,y)|u(s,y)|\log_{+}(u(s,y))dsdy\right\|_{L^{2}_x} +
    c_2t,
\end{aligned}
\end{align}
where we have used
\begin{align}\label{fact1}
    \int_{0}^1p_{t}(x,y)dy \leq  1,\quad \forall\ t > 0,\ x\in [0,1].
\end{align}
By H\"older's inequality and \eqref{fact1}, the first term on the right hand side of \eqref{eq113} can be estimated as follows,
\begin{align}\label{eq114}
\begin{aligned}[b]
    &\left\|\int_{0}^{t}\int_{0}^{1}p_{t-s}(x,y)|u(s,y)|\log_{+}(u(s,y))dsdy\right\|_{L^{2}_x} \\ \leq&
    \int_{0}^{t}\left\|\int_{0}^{1}p_{t-s}(x,y)|u(s,y)|\log_{+}(u(s,y))dy\right\|_{L^2_x}ds\\
    \leq&
    \int_{0}^{t}\left\{ \int_{0}^{1}\left(\int_{0}^{1}p_{t-s}(x,y)\log_{+}^2(|u(s,y)|dy\right)\left(\int_{0}^{1}p_{t-s}(x,y)|u(s,y)|^2dy\right)dx\right\}^{\frac{1}{2}}ds\\
    \leq&
    \int_{0}^{t}\left\{\sup_{x \in [0,1]}\int_{0}^{1}p_{t-s}(x,y)\log_{+}^2(|u(s,y)|)dy\right\}^{\frac{1}{2}}\|u(s)\|_{L^2}ds.
\end{aligned}
\end{align}
We now estimate the term
\begin{align*}
\sup_{x \in [0,1]}\int_{0}^{1}\log_{+}^2(|u(s,y)|)p_{t-s}(x,y)dy.
\end{align*}
Let $g(z) = \log_+^2z$. Then we know $g'(z) = 2\frac{1}{z}\log_+z \geq 0$. So $g(z)$ is increasing.
And $g''(z) = \frac{2}{z^2}(1- \log_+z) < 0$, provided $z > e$. So $g(z)$ is concave when $z > e$. Next, we fix any $s > 0$. Let $\Gamma := \{y\in[0,1], |u(s,y)| > e\}$ and $\mu_x(dy) := p_{t-s}(x,y)dy$. Obviously,
\begin{align}\label{eq3}
\begin{aligned}[b]
    &\sup_{x \in [0,1]}\int_{0}^{1}\log_{+}^2(|u(s,y)|)p_{t-s}(x,y)dy \\=&\sup_{x \in[0,1]}\int_{[0,1]\setminus \Gamma}g(|u(s,y)|)\mu_x(dy) + \sup_{x\in[0,1]}\int_{\Gamma}g(|u(s,y)|)\mu_x(dy).
\end{aligned}
\end{align}
The first term on the right hand side of \eqref{eq3} can be bounded as follows,
\begin{align*}
    \sup_{x \in[0,1]}\int_{[0,1]\setminus \Gamma}g(|u(s,y)|)\mu_x(dy) \leq \sup_{x \in [0,1]}\int_{0}^1p_{t-s}(x,y)dy\leq 1.
\end{align*}
Thus, by Jensen's inequality and the fact that $g$ is inceasing, we have
\allowdisplaybreaks
\begin{align}\label{eq4}
    &\sup_{x \in [0,1]}\int_{0}^{1}\log_{+}^2(|u(s,y)|)p_{t-s}(x,y)dy \notag\\ \leq& \sup_{x\in[0,1]}\left\{1 + \mu_x(\Gamma)\int_{\Gamma}g(|u(s,y)|)\frac{\mu_x(dy)}{\mu_x(\Gamma)}\right\} \notag\\\leq&
    \sup_{x\in[0,1]}\left\{1 + \mu_x(\Gamma)g\left(\frac{1}{\mu_x(\Gamma)}\int_{\Gamma}|u(s,y)|\mu_x(dy)\right)\right\} \notag\\\leq&
    \sup_{x\in[0,1]}\left\{1 + \mu_x(\Gamma)\log_{+}^2\left(\frac{1}{\mu_x(\Gamma)}\int_{0}^1|u(s,y)|p_{t-s}(x,y)dy\right)\right\}
    \notag\\ \leq&
    \sup_{x\in[0,1]}\left\{1 + \sqrt{\mu_x(\Gamma)}\log_{+}\left(\frac{1}{\mu_x(\Gamma)}\int_{0}^1|u(s,y)|p_{t-s}(x,y)dy\right)\right\}^2 \notag\\ \leq&
    \sup_{x\in[0,1]}\left\{2 + \log_{+}\left[\left(\int_{0}^1p_{t-s}(x,y)^2dy\right)^{\frac{1}{2}}\left(\int_{0}^1|u(s,y)|^2dy\right)^{\frac{1}{2}}\right]\right\}^2 \notag\\ \leq&
    \left\{2 + \log_{+}\left[(4\pi)^{-\frac{1}{4}}(t-s)^{-\frac{1}{4}}||u(s)||_{L^2}\right]\right\}^2 \notag\\ \leq&
    \left\{2 + \frac{1}{4}\log_{+}\left(\frac{1}{t-s}\right) + \log_{+}\|u(s)\|_{L^2}\right\}^2,
\end{align}
where we have used the facts that $\log_{+}(ab) \leq \log_{+}(a) + \log_{+}(b)$ for any $a$, $b \geq 0$, $\sqrt{\mu_x(\Gamma)}\log_{+}\frac{1}{\mu_x(\Gamma)} \leq 1$ and
\begin{align*}
    \int_{0}^1p_{t}(x,y)^2dy \leq (4\pi)^{-\frac{1}{2}}t^{-\frac{1}{2}},\quad  \forall\ t > 0.
\end{align*}
Combining \eqref{eq113}, \eqref{eq114} and \eqref{eq4} together gives
\begin{align*}
    I_1(t) &= \left\|\int_{0}^{t}\int_{0}^{1}p_{t-s}(x,y)b(u(t,x))dsdy\right\|_{L^{2}_x} \\&\leq
    \int_{0}^{t}c_1\left[2 + \frac{1}{4}\log_{+}\left(\frac{1}{t-s}\right) + \log_{+}\|u(s)\|_{L^2}\right]\|u(s)\|_{L^2}ds + c_2t.
\end{align*}
Hence for any $\alpha \in (0,1)$, there exists a constant $C_{c_1,\alpha}$ such that
\begin{align*}
    \|u(t)\|_{L^2} &\leq \|u_0\|_{L^2} + I_{2}(t) + c_2t + 2c_1\int_{0}^{t}\|u(s)\|_{L^2}ds \\&+ \frac{1}{4}C_{c_1,\alpha}\int_{0}^{t}(t-s)^{-\alpha}\|u(s)\|_{L^2}ds +
    c_1\int_{0}^{t}\|u(s)\|_{L^2}\log_{+}\|u(s)\|_{L^2}ds.
\end{align*}
In the sequel, we take $\alpha = \frac{1}{2}$. By Lemma 3.3 and noting that $T_0 \leq 1$, we have for any $t \leq T_0$,
\begin{align}\label{+4}
    \|u(t)\|_{L^2} \leq C(1 + c_2  + \|u_0\|_{L^2} + I_2(t))^{e^{\widetilde{C}t}},
\end{align}
Thus
\begin{align*}
    \sup_{t  \leq T_0}\|u(t)\|_{L^2} \leq C\left(1 + c_2 + \|u_0\|_{L^2} + \sup_{t\leq T_0}I_{2}(t)\right)^{e^{\widetilde{C} T_0}}.
\end{align*}
Hence we have
\begin{align}\label{+1}
    \notag&\mathbb{E}\left[\sup_{t \leq T_0}\|u(t)\|_{L^2}^p\right] \\\leq & 3^{pe^{\widetilde{C}T_0}-1}C^p\left[(1 + c_2)^{pe^{\widetilde{C}T_0}} + \|u_0\|_{L^2}^{pe^{\widetilde{C}T_0}} + \mathbb{E}\left(\sup_{t \leq T_0} I_2(t)^{pe^{\widetilde{C}T_0}}\right)\right].
\end{align}
By the Lemma 4.1, we get
\begin{align*}
    \mathbb{E}\left[\sup_{t \leq T_0}I_2(t)^{pe^{\widetilde{C}T_0}}\right] &= \mathbb{E}\left[\sup_{t \leq T_0}\left\|\int_{0}^t\int_{0}^1p_{t-s}(s,y)\sigma(u(s,y))W(ds,dy)\right\|_{L^2_x}^{pe^{\widetilde{C}T_0}}\right] \\ &\leq
    C_{T_0}\mathbb{E}\left[\int_{0}^{T_0}\|\sigma(u(s))\|_{L^2}^{pe^{\widetilde{C}T_0}}ds\right] \\ &\leq
    C_{T_0}\mathbb{E}\left[\int_{0}^{T_0}\left(\int_{0}^1c(1 + |u(s,y)|^{2\theta})dy\right)^{\frac{p}{2}e^{\widetilde{C}T_0}}ds\right] \\ &\leq
    C_{p,T_0}\left(1 + \mathbb{E}\left[\int_{0}^{T_0}\|u(s)\|_{L^2}^{p}ds\right]\right),
\end{align*}
where we have used the fact that $\theta e^{\widetilde{C}T_0} \leq 1$ and H\"older's inequality. From \eqref{+1} we deduce that for all $t \leq T_0$,
\begin{align*}
    \mathbb{E}\left(\sup_{s \leq t}\|u(s)\|_{L^2}^p\right) \leq
    C_{p,T_0} + 3^{pe^{\widetilde{C}T_0}-1}C^p\|u_0\|_{L^2}^{pe^{\widetilde{C}T_0}} + C_{p,T_0}\mathbb{E}\left[\int_{0}^{t}\|u(s)\|_{L^2}^pds\right].
\end{align*}
By the Gronwall inequality, we have
\begin{align*}
    \mathbb{E}\left(\sup_{t \leq T_0}\|u(t)\|_{L^2}^p\right) \leq C_{p,T_0}\left(1 + \|u_0\|_{L^2}^{pe^{\widetilde{C}}}\right).
\end{align*}
\end{proof}
\section{Existence of weak solutions}

In this section, we fix $T > 0$ and prove the existence of probabilistically weak solutions of equation \eqref{eq5}. We will approximate the coefficients $b$ and $\sigma$ by Lipschitz functions.

Let $\varphi$ be a nonnegative smooth function on $\mathbb{R}$ such that the support of $\varphi$ is contained in $(-1,1)$ and $\int_{\mathbb{R}}\varphi(x)dx = 1$. Let $\{\eta_n\}_{n \geq 1}$ be a sequence of symmetric smooth functions such that $0 \leq \eta_n \leq 1$, $\eta_n(x) = 1 $ if $|x| \leq n$, and $\eta_n = 0$ if $|x| \geq n +2.$
Define
\begin{align*}
    b_n(x) := n\int_{\mathbb{R}}b(y)\varphi(n(x-y))dy \times\eta_n(x).
\end{align*}
Then it is easy to check that there exist constants $L_{n}$ and $L_{b}'$ such that for any $x,y \in \mathbb{R}$, $n \in \mathbb{N}$,
\begin{align*}
    |b_n(x) - b_n(y)| \leq L_n|x-y|,\\
    |b_n(x)| \leq c_1|x|\log_{+}|x| + L_{b}'(|x|+1),
\end{align*}
where the constant $L_b'$ is independent of $n$. Moreover, if $x_n \xrightarrow{} x$ in $\mathbb{R}$, then
\begin{align}\label{wang3}
    b_n(x_n) \xrightarrow{} b(x).
\end{align}

For $n \geq 1$, consider the following stochastic partial differential equation on the interval $[0,1]$,
\begin{align*}
   u_n(t,x) &= P_{t}u_{0}(x) + \int_{0}^{t}\int_{0}^{1}p_{t-s}(x,y)b_n(u(s,y))dyds \\&+ \int_{0}^{t}\int_{0}^{1}p_{t-s}(x,y)\sigma(u_n(s,y))W(ds,dy).
\end{align*}
It is known that for each $n \geq 1$, there exists a unique solution $u_n$ to the above equation. Moreover, the sample paths of $u_n$ are
a.s. in $C(\mathbb{R}_+,L^2([0,1])).$

Following the proof of Theorem 4.3, we see that the following uniform estimate holds,
\begin{align}\label{eq6}
    \sup_{n}\mathbb{E}\left[\sup_{t\leq T_0}\|u_n(t)\|_{L^2}^p\right] \leq C_{p,T_0}
\end{align}
for any $p \geq 1$, where $T_0$ is defined by \eqref{def1}.

We now want to show that the above estimate holds for any $T > 0$.
\begin{Lemma}
    Assume that (H1) and (H3) are satisfied then for any $T > 0$ and $\forall\ p \geq 1$, the following estimate holds
\begin{align*}
    \sup_{n}\mathbb{E}\left[\sup_{t\leq T}\|u_n(t)\|_{L^2}^p\right] \leq C_{p,T}.
\end{align*}
\end{Lemma}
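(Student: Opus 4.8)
The plan is to iterate the local-in-time bound \eqref{eq6} across consecutive time intervals of a fixed length $T_0$. The feature that makes this work is that $T_0$, defined by \eqref{def1}, depends only on the structural constants: it is determined by $c_1$ and the exponent $\alpha=\tfrac12$ (through $\widetilde C$ in \eqref{+4}) and by $\theta$, and in particular it is \emph{the same for every $n$} and independent of the moment order.

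\textbf{Step 1: a one-step estimate with random initial data.} First I would observe that the derivation of the pathwise bound \eqref{+4} in the proof of Theorem 4.3 is carried out $\omega$ by $\omega$, so it applies verbatim to the equation started at any time $t_0\ge 0$ from the $\mathcal F_{t_0}$-measurable datum $u_n(t_0)$: by the semigroup property of $p_t$ and the time-translation invariance of space-time white noise, $u_n$ restricted to $[t_0,t_0+T_0]$ solves the mild equation on that interval with initial value $u_n(t_0)$ and driving noise $W(t_0+\cdot\,,\cdot)$. Since each $b_n$ satisfies (H1) with the common constant $c_1$ and an $n$-independent affine remainder, the constant $\widetilde C$ in \eqref{+4} — and hence $T_0$ — is the same for all $n$; repeating the argument of Theorem 4.3 (raising \eqref{+4} to the power $q$, estimating the stochastic convolution term by Lemma 4.1, using $\theta e^{\widetilde C T_0}\le 1$ to absorb the contribution of $\sigma$, and closing with Gronwall) yields a fixed exponent $\beta\ge 1$ and, for every $q\ge 1$, a constant $C_{q,T_0}$ independent of $n$ with
\begin{equation*}
\mathbb E\Big[\sup_{t_0\le t\le t_0+T_0}\|u_n(t)\|_{L^2}^{q}\Big]\ \le\ C_{q,T_0}\Big(1+\mathbb E\,\|u_n(t_0)\|_{L^2}^{\,\beta q}\Big).
\end{equation*}
The range $q\le 8$, where Lemma 4.1 does not directly apply, is reduced to $q>8$ by Jensen's inequality, which only enlarges the power on the right to some fixed exponent $\ge q$.

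\textbf{Step 2: iteration.} Given $T>0$, set $N:=\lceil T/T_0\rceil$ and write $\sup_{t\le T}\|u_n(t)\|_{L^2}^p\le\sum_{k=0}^{N-1}\sup_{kT_0\le t\le(k+1)T_0}\|u_n(t)\|_{L^2}^p$. For each fixed $k$ I would apply the one-step estimate of Step 1 exactly $k+1$ times, marching backwards from $[kT_0,(k+1)T_0]$ down to $[0,T_0]$: each application trades a $q$-th moment on one interval for the $\beta q$-th moment of $u_n$ at the left endpoint of that interval, which in turn is dominated by the supremum over the previous interval. After $k+1$ applications the bound is expressed through $\mathbb E\,\|u_n(0)\|_{L^2}^{\beta^{k+1}p}=\|u_0\|_{L^2}^{\beta^{k+1}p}$, which is finite because $u_0\in L^2([0,1])$ is a fixed deterministic function. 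All constants generated depend on $p,k,T_0$ but not on $n$, so summing over $k=0,\dots,N-1$ gives $\sup_n\mathbb E\big[\sup_{t\le T}\|u_n(t)\|_{L^2}^p\big]\le C_{p,T}$.

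The only delicate point is Step 1: that the iteration step length $T_0$ is genuinely $n$-independent, and that the a priori bound tolerates a random initial condition. Both are immediate once one checks that the derivation of \eqref{+4} uses (H1) only through the single log-growth constant $c_1$ shared by all the $b_n$ (together with (H3) for $\sigma$), and that every manipulation up to \eqref{+4} is deterministic given the path of $u_n$. Everything after Step 1 is a finite recursion with $n$-free constants and is routine.
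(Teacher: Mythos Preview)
Your proposal is correct and follows essentially the same strategy as the paper: both restart the mild equation on successive intervals of length $T_0$ using the semigroup property and time-shifted noise, invoke the pathwise estimate \eqref{+4} (whose constants depend only on $c_1,\theta$ and not on $n$) to get a one-step bound that inflates the moment exponent by a fixed factor, and then iterate finitely many times, the base case \eqref{eq6} being available for every moment order. The paper only spells out the case $m=1$ and leaves the rest to the reader; your Step 2 makes the recursion explicit, but the argument is the same.
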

\begin{proof}
$\forall\ T \geq 0$, there is a nonnegative number $m$ such that $T \in [mT_0,(m+1)T_0]$, so we only need to show $\forall\ m \in \mathbb{N}$,
\begin{align*}
    \sup_{n}\mathbb{E}\left[\sup_{t\in [mT_0,(m+1)T_0]}\|u_n(t)\|_{L^2}^p\right] \leq C_{m,p}.
\end{align*}
As an example, we prove the case $m=1$, i.e.
\begin{align}\label{eq200}
    \sup_{n}\mathbb{E}\left[\sup_{t\in [T_0,2T_0]}\|u_n(t)\|_{L^2}^p\right] \leq C_{p}.
\end{align}
The same method can be applied to any $m \in \mathbb{N}$. By the uniqueness of $u_n(s,y)$, $\forall\ n \geq 0$, we have for $t \in [T_0, 2T_0]$,
\begin{align}\label{eq201}
\begin{aligned}
    u_n(t,x) &= P_{t-T_0}u_n(T_0) + \int_{T_0}^{t}\int_{0}^{1}p_{t-s}(x,y)b_n(u_n(s,y))dyds \\&+ \int_{T_0}^{t}\int_{0}^{1}p_{t-s}(x,y)\sigma(u_n(s,y))W(ds,dy).
\end{aligned}
\end{align}
Let $W^{T_0}(t,x) := W(t+T_0,x) - W(T_0,x)$, $\mathcal{F}_t^{T_0}:=\mathcal{F}_{t+T_0}$, and $u_n^{T_0}(t,x) := u_n(t+T_0,x)$, $\forall\ t \geq 0$.
Then $W^{T_0}(t,x)$ is a Brownian sheet with respect to the filtration $(\mathcal{F}^{T_0}_{t})_{t\geq0}$. Then by changing of variables, \eqref{eq201} can be rewritten as
\begin{align*}
    u_n^{T_0}(t,x) &= P_tu_n^{T_0}(0,x) + \int_{0}^{t}\int_{0}^{1}p_{t-s}(x,y)b_n(u^{T_0}_n(s,y))dsdy \\&+ \int_{0}^{t}\int_{0}^{1}p_{t-s}(x,y)\sigma(u_n^{T_0}(s,y))W^{T_0}(ds,dy),
\end{align*}
where $t \in [0, T_0]$. By the contractivity of the heat kernel and the similar argument as in the proof of Theorem 4.3, we obtain that
\begin{align*}
    \sup_{n}\mathbb{E}\left[\sup_{t\in [T_0,2T_0]}\|u_n(t)\|_{L^2}^p\right] = \sup_{n}\mathbb{E}\left[\sup_{t \leq T_0}\|u_n^{T_0}(t)\|_{L^2}^p\right] \leq C_{p}\left(1 + \sup_{n}\mathbb{E}\left[\|u_n(T_0)\|_{L^2}^{pe^{C_{c_1}}}\right]\right)
\end{align*}
for any $p \geq 1$. Since \eqref{eq6} holds for any $p \geq 1$, we see that
\begin{align*}
    \sup_{n}\mathbb{E}\left[\sup_{t\in [T_0,2T_0]}\|u_n(t)\|_{L^2}^p\right] \leq C_{p},
\end{align*}
which proves \eqref{eq200}.
\end{proof}
We let $L_b$ be a constant independent of $n$ such that
\begin{align*}
    |b_n(x)| \leq L_b(|x|\log_{+}|x| + 1).
\end{align*}
Such a constant always exists because of the assumption on the coefficient $b$.

Next, we will establish the tightness of the approximating solutions $u_n$ in $C([0,T],L^2([0,1]))$.
Set
\begin{align*}
    I_n(t,x) := \int_{0}^{t}\int_{0}^{1}p_{t-s}
(x,y)b_{n}(u_n(s,y))dyds
\end{align*}
and
\begin{align*}
    J_n(t,x) := \int_{0}^t\int_{0}^1p_{t-s}(x,y)\sigma(u_n(s,y))W(ds,dy).
\end{align*}
To prove the tightness of $u_n$ in $C([0,T],L^2([0,1]))$, it suffices to prove that both $I_n(t,x)$ and $J_n(t,x)$ are tight in $C([0,T],L^2([0,1]))$. This will be done in Lemma 5.2 and Lemma 5.3 below.
\begin{Lemma}
The sequence $I_n(t,x)$ is tight in $C([0,T],L^2([0,1]))$.
\end{Lemma}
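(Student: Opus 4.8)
The plan is to establish tightness of $\{I_n\}$ in $C([0,T],L^2([0,1]))$ via the Kolmogorov-type criterion: since $I_n(0)=0$, it suffices to obtain (i) a uniform-in-$n$ moment bound $\sup_n\mathbb{E}[\sup_{t\le T}\|I_n(t)\|_{L^2}^p]\le C$, and (ii) an equicontinuity estimate in time of the form $\mathbb{E}[\|I_n(t+h)-I_n(t)\|_{L^2}^p]\le C h^{\gamma}$ for some $p$ large and $\gamma>0$, uniformly in $n$ and in $t\in[0,T]$. Combined with the compactness of bounded sets in, say, a slightly better space (or directly by the Ascoli-type argument in $C([0,T],L^2)$ using that a tight family of $L^2$-valued processes with uniformly Hölder paths is tight), this yields the claim. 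The moment bound (i) follows immediately from Lemma 5.1 together with the growth bound $|b_n(x)|\le L_b(|x|\log_+|x|+1)$ and the contraction property $\int_0^1 p_t(x,y)dy\le 1$, exactly as in the estimate of $I_1(t)$ in the proof of Theorem 4.3; in fact $\sup_n\mathbb{E}[\sup_{t\le T}\|I_n(t)\|_{L^2}^p]\le C_{p,T}(1+\sup_n\mathbb{E}[\sup_{t\le T}\|u_n(t)\|_{L^2}^{p'}\log_+^{p'}\|u_n(t)\|_{L^2}])<\infty$ for suitable $p'$.

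For the time-increment estimate (ii), I would split $I_n(t+h,x)-I_n(t,x)$ into the two standard pieces:
\begin{align*}
I_n(t+h)-I_n(t) &= \int_t^{t+h}\!\!\int_0^1 p_{t+h-s}(x,y)b_n(u_n(s,y))\,dy\,ds \\
&\quad + \int_0^t\!\!\int_0^1 \big(p_{t+h-s}(x,y)-p_{t-s}(x,y)\big)b_n(u_n(s,y))\,dy\,ds.
\end{align*}
The first term is controlled by Minkowski's inequality and $\int_0^1 p_r(x,y)dy\le 1$: its $L^2_x$-norm is at most $\int_t^{t+h}\|b_n(u_n(s))\|_{L^2}\,ds$, and the same concavity/Jensen argument used for \eqref{eq4} (applied to $|b_n(u_n)|\le L_b(|u_n|\log_+|u_n|+1)$) bounds this by $Ch\cdot(1+\sup_{s\le T}\|u_n(s)\|_{L^2}\log_+\sup_{s\le T}\|u_n(s)\|_{L^2})$, whose $p$-th moment is $\le Ch^p$ by Lemma 5.1. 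For the second term I would use Minkowski again to bring the $L^2_x$-norm inside, then Hölder in $y$ together with the concavity estimate for $\log_+^2|u_n|$ as in \eqref{eq4}, reducing matters to $\int_0^t \big(\sup_x\int_0^1(p_{t+h-s}(x,y)-p_{t-s}(x,y))^2 dy\big)^{1/2}\big(1+\|u_n(s)\|_{L^2}\log_+\cdots\big)\,ds$; the kernel factor is handled by Lemma 3.6, which gives $\int_0^\infty \sup_x\int_0^1 (p_{r+h}-p_r)^2\,dy\,dr\le Ch^{1/2}$, so that (after a Cauchy–Schwarz in $s$ using that $\int_0^t\sup_x\int_0^1(p_{t+h-s}(x,\cdot)-p_{t-s}(x,\cdot))^2\,dy\,ds\le Ch^{1/2}$) one obtains a bound of order $h^{1/4}$ times a quantity with finite $p$-th moment uniformly in $n$.

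**The main obstacle** is the logarithmic nonlinearity: $b_n$ grows like $|z|\log_+|z|$, so one cannot directly pull $\|b_n(u_n(s))\|_{L^2}$ out — one must reproduce, inside the time-increment estimate, the delicate concavity/Jensen trick of Theorem 4.3 (estimate \eqref{eq4}) that converts $\sup_x\int p_{t-s}(x,y)\log_+^2|u_n(s,y)|\,dy$ into something like $(2+\tfrac14\log_+\tfrac{1}{t-s}+\log_+\|u_n(s)\|_{L^2})^2$, and then check that the resulting $(t-s)^{-1/2}$-type singularities are integrable against the kernel-difference bounds from Lemma 3.6 while still leaving a positive power of $h$. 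Once this is in place, Lemma 5.1 supplies all the needed moments, and a Kolmogorov–Chentsov/Ascoli argument in $C([0,T],L^2([0,1]))$ concludes the tightness.
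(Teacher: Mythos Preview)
Your proposal has a genuine gap: conditions (i) and (ii) alone do \emph{not} yield tightness in $C([0,T],L^2([0,1]))$, because $L^2([0,1])$ is infinite-dimensional and bounded sets are not relatively compact. The Ascoli criterion in $C([0,T],L^2)$ requires, for each fixed $t$, that the laws of $I_n(t)$ be tight in $L^2$, and your bound $\sup_n\mathbb{E}\|I_n(t)\|_{L^2}^p\le C$ does not give this. You allude to ``a slightly better space'' but never establish membership in one; this is exactly the missing ingredient. The paper closes this gap by proving a \emph{pointwise} spatial H\"older estimate (Claim~1): $\sup_n\mathbb{E}|I_n(t,x)-I_n(t,y)|^p\le C|x-y|^{1+q}$, using Lemma~3.6 for $\int_0^\infty\sup_z|p_t(x,z)-p_t(y,z)|\,dt$. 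This places $I_n(t)$ in $C^\alpha([0,1])$ with uniform moment bounds, and the compact embedding $C^\alpha([0,1])\hookrightarrow L^2([0,1])$ then supplies the needed spatial tightness.

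There is also an unnecessary complication in your plan. You identify the logarithmic growth of $b_n$ as the ``main obstacle'' and propose to reproduce the concavity/Jensen argument of \eqref{eq4} inside the increment estimates. The paper avoids this entirely: for the tightness proof it simply uses the crude bound $|b_n(z)|\le L_b(1+|z|^2)$, which is valid since $|z|\log_+|z|\le C(1+|z|^2)$. Because Lemma~5.1 provides $\sup_n\mathbb{E}\sup_{t\le T}\|u_n(t)\|_{L^2}^{2p}<\infty$ for every $p$, this quadratic bound is harmless and makes both Claim~1 and Claim~2 essentially straightforward kernel estimates. Your route via the logarithmic bound would presumably work too, but it is considerably more laborious and does not buy anything here.
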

\begin{proof}
Since $C^{\alpha}([0,1])$ is compactly embedded into $L^2([0,1])$, to establish the tightness in $C([0,T],L^2([0,1]))$, it suffices to show that:
\begin{align*}
 \lim\limits_{M \to \infty}\sup_{n}\mathbb{P}(\|I_n(t)\|_{C^{\alpha}([0,1])} \geq M) = 0,\quad \quad \forall\ t \in (0,T],
\end{align*}
where $\|\cdot\|_{C^{\alpha}([0,1])}$ is the norm of the $\alpha$-H\"older space, and
\begin{align*}
\forall\ \epsilon >0,\quad\quad \lim\limits_{h \to 0}\sup_n\mathbb{P}\left(\max_{t,s \in[0,T],|t-s| \leq h}\|I_n(t,\cdot) - I_n(s,\cdot)\|_{L^2} > \epsilon\right) = 0.
\end{align*}
Noticing that $I_n(t,0) = 0$, to verify these two conditions, it is sufficient to prove the following claims:\\
\textbf{Claim 1}: $\forall\ t \in [0,T],$ $\exists\ p,q >0,$ $C_{p,q,T} >0$, such that
\begin{align*}
\sup_n\mathbb{E}[|I_n(t,x) - I_n(t,y)|^p] \leq C_{p,q,T}|x-y|^{1+q},\quad  \forall\ x,y \in [0,1].
\end{align*}
\textbf{Claim 2}: $\exists\ p, q >0,$ and $C_{T} > 0, $ such that
\begin{align*}
\sup_n\mathbb{E}[\|I_n(t,\cdot) - I_n(s,\cdot)\|_{L^2}^p] \leq C_{T}|t-s|^{1+q}, \quad \forall\ s,t \in [0,T].
\end{align*}
We now prove \textbf{Claim 1}. Without loss of generality, we assume that
\begin{align*}
    |b_n(z)| \leq L_b(1 + |z|^2).
\end{align*}
We have
\begin{align}\label{eq202}
\begin{aligned}[b]
    |I_n(t,x) - I_n(t,y)|  &= \left|\int_{0}^t\int_{0}^1(p_{t-s}(x,z) - p_{t-s}(y,z))b_n(u_n(s,z))dsdz\right| \\ &\leq
    \int_{0}^t\int_{0}^1|p_{t-s}(x,z) - p_{t-s}(y,z)||b_n(u_n(s,z))|dsdz \\ &\leq
    L_b\left(\int_{0}^t\int_{0}^1|p_{t-s}(x,z) - p_{t-s}(y,z)||u_n(s,z)|^2dsdz\right) \\ &+
    L_b\int_{0}^{t}\int_{0}^1|p_{t-s}(x,z) - p_{t-s}(y,z)|dsdz.
\end{aligned}
\end{align}
The first term on the right hand side of above can be bounded as follows,
\begin{align*}
    &\left(\int_{0}^t\int_{0}^1|p_{t-s}(x,z) - p_{t-s}(y,z)||u_n(s,z)|^2dsdz\right) \\\leq& \left(\max _{s \leq T}\int_{0}^1|u_n(s,z)|^2dz\right)\int_{0}^t\max_{z\in[0,1]}\left|p_{t-s}(x,z) - p_{t-s}(y,z)\right|ds  \\ \leq&
    C\left(\max_{s \leq T}\int_{0}^1|u_n(s,z)|^2dz\right)|x-y|\left[\log\left(\frac{1}{|x-y|}\right) + 1\right],
\end{align*}
where we have used Lemma 3.6. Note that $\forall\ \gamma \in (0,1)$, there exist nonnegative constants $c_1$ and $c_2$ such that
\begin{align*}
    \log\left(\frac{1}{|x-y|}\right) \leq c_1\frac{1}{|x-y|^{\gamma}}  + c_2.
\end{align*}
So we have
\begin{align*}
    &\int_{0}^t\int_{0}^1|p_{t-s}(x,z) - p_{t-s}(y,z)||u_n(s,z)|^2dsdz \\\leq& C\left(\max_{s\leq T}\int_{0}^1|u_n(s,z)|^2dz\right)(c_1|x-y|^{1-\gamma} + c_2|x-y|) \\ \leq&
    C\left(\max_{s\leq T}\int_{0}^1|u_n(s,z)|^2dz + 1\right)|x-y|^{1-\gamma}.
\end{align*}
The second term on the right hand side of \eqref{eq202} can be bounded similarly, but much simpler. Hence
\begin{align*}
    |I_n(t,x) - I_n(t,y)| &\leq C\left(\max_{s \leq T}\|u_n(s)\|_{L^2}^{2} + 1\right)|x-y|^{1-\gamma} + L_b\int_{0}^{t}\int_{0}^1|p_{t-s}(x,z) - p_{t-s}(y,z)|dsdz \\
    &\leq C\left(\max_{s \leq T}\|u_n(s)\|_{L^2}^{2} + 2\right)|x-y|^{1-\gamma}.
\end{align*}
So
\begin{align*}
    \sup_n\mathbb{E}\left[|I_n(t,x) - I_n(t,y)|^p\right] \leq C\sup_n\mathbb{E}\left[\max_{s\leq T}\|u_n(s)\|_{L^2}^{2} + 2\right]^p|x-y|^{(1-\gamma)p}.
\end{align*}
By choosing $p$ such that $(1-\gamma)p >1$ and noting Lemma 5.1, \textbf{Claim 1} is proved. Next we prove \textbf{Claim 2}. Without loss of generality, we assume $s < t$. Obviously
\begin{align}\label{eq203}
\begin{aligned}[b]
    \|I_n(t) - I_n(s)\|_{L^2}^p & \leq
    c_p\left\|\int_{0}^s\int_{0}^1(p_{t-r}(x,y)-p_{s-r}(x,y))b_n(u_n(r,y))drdy\right\|_{L^2_x}^p \\&+ c_p\left\|\int_{s}^t\int_{0}^1p_{t-r}(x,y)b_n(u_n(r,y))drdy\right\|_{L^2_x}^p \\ &:=
    c_p(A_1^p + A_2^p).
\end{aligned}
\end{align}
Now we estimate the terms $A_1^p$ and $A_2^p$. For $A_1$, we have
\begin{align*}
    A_1 &\leq
    L_b\left(\int_{0}^1\int_{0}^s\|p_{t-r}(x,y) - p_{s-r}(x,y)\|_{L^2_x}|u_n(r,y)|^2drdy\right)
    \\ &+
    L_b\int_{0}^1\int_{0}^s\|p_{t-r}(x,y) - p_{s-r}(x,y)\|_{L^2_x}drdy.
\end{align*}
Now
\begin{align*}
    &\int_{0}^1\int_{0}^s\|p_{t-r}(x,y) - p_{s-r}(x,y)\|_{L^2_x}|u_n(r,y)|^2drdy \\\leq& \left(\int_{0}^s\sup_{y}\|p_{t-r}(x,y) - p_{s-r}(x,y)\|_{L^2_x}dr\right) \times \left(\max_{r \leq T}\int_{0}^1|u_n(r,y)|^2dy\right) \\
    \leq&
    \left(\int_{0}^s\sup_y\|p_{t-r}(x,y) - p_{s-r}(x,y)\|_{L^2_x}^2dr\right)^{\frac{1}{2}} \times \left(\max_{r \leq T}\int_{0}^1|u_n(r,y)|^2dy\right)
    \\ \leq&
    C(t-s)^{\frac{1}{4}}\left(\max_{r \leq T}\int_{0}^1|u_n(r,y)|^2dy\right),
\end{align*}
where Lemma 3.5 is used in the last line. Therefore,
\begin{align}\label{eq204}
    A_1^{p} \leq C_{T}\left(\max_{r \leq T}\|u_n(r)\|_{L^2}^{2p} + 1\right)(t-s)^{\frac{p}{4}}.
\end{align}
For $A_2^p,$ we have
\begin{align}\label{eq205}
\begin{aligned}[b]
    &\left\|\int_{s}^t\int_{0}^1p_{t-r}(x,y)b_n(u_n(r,y))drdy\right\|_{L^2_x}^p \\\leq& \left(\int_{s}^t\int_{0}^1\|p_{t-r}(x,y)\|_{L^2}|b_n(u_n(r,y))|drdy\right)^p \\ \leq&
    \left(L_b\int_{s}^t\int_{0}^1(t-r)^{-\frac{1}{4}}(|u_n(r,y)|^2 + 1)drdy\right)^p  \\ \leq& C_p\left(\sup_{r \leq T}\|u_n(r)\|_{L^2}^{2p} + 1\right)(t-s)^{\frac{3}{4}p}.
\end{aligned}
\end{align}
Combining \eqref{eq203}-\eqref{eq205} together and then taking expectations gives
\begin{align*}
    \sup_{n}\mathbb{E}[\|I_n(t) - I_n(s)\|_{L^2}^p] \leq C_{T}\sup_n\mathbb{E}\left[\max_{r \leq T}\|u_n(r)\|_{L^2}^{2p} + 1\right](t-s)^{\frac{p}{4}}, \quad \forall\ s,t \in [0,T].
\end{align*}
We choose $p > 4$, then the \textbf{Claim 2} is proved by Lemma 5.1.
\end{proof}
Next, we will prove the tightness of $\{J_n\}_{n \geq 1}$ with a different method.
\begin{Lemma}
The sequence $J_n(t,x)$ is tight in $C([0,T],L^2([0,1]))$.
\end{Lemma}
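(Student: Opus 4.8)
The plan is to establish tightness of $\{J_n\}$ in $C([0,T],L^2([0,1]))$ by the same two-pronged criterion used for $I_n$: a spatial-regularity estimate that forces, for each fixed $t$, the laws of $J_n(t,\cdot)$ to live in a compact subset of $L^2([0,1])$ (via the compact embedding $C^{\alpha}([0,1]) \hookrightarrow L^2([0,1])$), together with a temporal modulus-of-continuity estimate
\begin{align*}
\sup_n \mathbb{E}\big[\|J_n(t) - J_n(s)\|_{L^2}^p\big] \leq C_T |t-s|^{1+q}, \qquad \forall\ s,t \in [0,T],
\end{align*}
for suitable $p,q>0$, which by Kolmogorov's criterion and Arzel\`a--Ascoli yields tightness of the paths in $C([0,T],L^2([0,1]))$. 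Because $J_n$ involves a stochastic integral rather than a deterministic convolution, the estimates here rest on the It\^o isometry / BDG inequality applied to the Hilbert--Schmidt representation in \eqref{eq108}, rather than on H\"older's inequality applied pathwise; this is the "different method" alluded to in the text.

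First I would write $J_n(t,x) = \int_0^t P_{t-s}\sigma(u_n(s)) \, dW_s$ as in Section 4. For the spatial regularity, I would estimate $\mathbb{E}[|J_n(t,x) - J_n(t,y)|^p]$ by Burkholder--Davis--Gundy, bounding it by a constant times
\begin{align*}
\mathbb{E}\left[\left(\int_0^t \int_0^1 (p_{t-s}(x,z) - p_{t-s}(y,z))^2 \sigma(u_n(s,z))^2 \, dz\, ds\right)^{p/2}\right],
\end{align*}
then using $|\sigma(u)|^2 \leq C(1 + |u|^{2\theta}) \leq C(1 + |u|^2)$ together with $\sup_n \mathbb{E}[\sup_{s\leq T}\|u_n(s)\|_{L^2}^{p}] < \infty$ from Lemma 5.1, and the heat-kernel increment bound; the needed space increment estimate $\int_0^\infty \sup_{x}\int_0^1 (p_r(x,z) - p_r(y,z))^2 \, dz \, dr \leq C|x-y|$ is the spatial analogue of Lemma 3.5 and can be derived in exactly the same Fourier-series fashion (or one interpolates between Lemma 3.6 and the $L^2$ bound). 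Choosing $p$ large enough that the resulting exponent of $|x-y|$ exceeds $1$ gives, via Kolmogorov's continuity theorem, the uniform bound $\sup_n \mathbb{E}[\|J_n(t)\|_{C^{\alpha}}^p] < \infty$ for some $\alpha>0$, hence tightness of $\{J_n(t,\cdot)\}$ in $L^2([0,1])$ for each $t$.

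For the temporal estimate I would split $J_n(t) - J_n(s)$ for $s<t$ into the "old noise" part $\int_0^s (P_{t-r} - P_{s-r})\sigma(u_n(r))\, dW_r$ and the "new noise" part $\int_s^t P_{t-r}\sigma(u_n(r))\, dW_r$. Applying BDG in the Hilbert space $L^2([0,1])$ to each, the first is controlled by $\mathbb{E}\big[(\int_0^s \|(P_{t-r}-P_{s-r})\sigma(u_n(r))\|_{HS}^2\, dr)^{p/2}\big]$, where $\|(P_{t-r}-P_{s-r})\sigma(u_n(r))\|_{HS}^2 = \int_0^1\int_0^1 (p_{t-r}(x,y)-p_{s-r}(x,y))^2 \sigma(u_n(r,y))^2 \, dy\, dx$; bounding $\sigma^2$ as above and invoking Lemma 3.5 (with a time shift) gives a factor $(t-s)^{1/2}$ from $\int_0^\infty \sup_x \int_0^1 (p_{r+h}(x,z)-p_r(x,z))^2\, dz\, dr \leq Ch^{1/2}$, so this part is $O((t-s)^{p/4})$ after Lemma 5.1. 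The second part is controlled by $\mathbb{E}\big[(\int_s^t \|P_{t-r}\sigma(u_n(r))\|_{HS}^2\, dr)^{p/2}\big]$ with $\|P_{t-r}\sigma(u_n(r))\|_{HS}^2 \leq C(t-r)^{-1/2}\|\sigma(u_n(r))\|_{L^2}^2$ by \eqref{eq109}, and $\int_s^t (t-r)^{-1/2}\, dr = 2(t-s)^{1/2}$, again yielding $O((t-s)^{p/4})$. Taking $p>4$ makes $1+q := p/4 > 1$. The main obstacle is organizing the BDG estimate so that only the first moment (or low moments) of $\sup_{s\leq T}\|u_n(s)\|_{L^2}$ of the solution — which Lemma 5.1 actually supplies for every $p\geq1$ — is used, and in particular ensuring the sublinear growth of $\sigma$ (the constant $\theta<1$) is genuinely exploited; here, since $2\theta \leq 2$, one can afford the crude bound $|u|^{2\theta}\leq 1+|u|^2$ and rely on Lemma 5.1 with a sufficiently large exponent, so there is no real difficulty, merely bookkeeping. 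Finally, combining the spatial compactness and the temporal modulus estimate gives tightness of $J_n$ in $C([0,T],L^2([0,1]))$, which together with Lemma 5.2 completes the tightness of $u_n$.
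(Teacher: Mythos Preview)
Your approach is genuinely different from the paper's. The paper does \emph{not} establish Kolmogorov-type increment estimates for $J_n$ at all; instead it invokes the factorization identity $J_n = \frac{\sin\alpha\pi}{\pi}\,J^{\alpha-1}(J_\alpha\sigma(u_n))$ together with the compactness of the linear map $J^{\alpha-1}:L^p([0,T],L^2)\to C([0,T],L^2)$ for $\alpha>1/p$ (Da Prato--Zabczyk). Tightness then reduces to a single moment bound $\sup_n\mathbb E\int_0^T\|J_\alpha\sigma(u_n)(s)\|_{L^2}^p\,ds<\infty$, obtained by BDG and the HS-norm estimate \eqref{eq109}, exactly as in the proof of Lemma~4.1. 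This sidesteps both the spatial $C^\alpha$ regularity and the temporal splitting into old/new noise. Your route is more hands-on and closer in spirit to the proof of Lemma~5.2; it has the advantage of being self-contained (no abstract compactness of $J^{\alpha-1}$), while the paper's method is shorter and avoids any pointwise-in-$x$ analysis of the stochastic convolution.

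Your temporal estimate is correct as written: pulling $\sup_y\int_0^1(p_{t-r}(x,y)-p_{s-r}(x,y))^2\,dx$ out of the $y$-integral leaves $\|\sigma(u_n(r))\|_{L^2}^2\le C(1+\|u_n(r)\|_{L^2}^2)$, and Lemma~3.5 closes the bound. However, your spatial step has a real gap. If you replace $|\sigma(u)|^2$ by $C(1+|u|^2)$ and try to separate the kernel from $|u_n(s,z)|^2$ with only $\|u_n(s)\|_{L^2}$ available, you are forced to pull out $\sup_z(p_r(x,z)-p_r(y,z))^2$, and $\int_0^t \sup_z(p_r(x,z)-p_r(y,z))^2\,dr$ \emph{diverges} near $r=0$ (for $r<|x-y|^2$ the two bumps are disjoint and the supremum behaves like $r^{-1}$). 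So the ``crude bound $|u|^{2\theta}\le 1+|u|^2$'' is \emph{not} merely bookkeeping here. The fix is to keep the exponent $2\theta$ and apply H\"older in $z$ with conjugate pair $(1/\theta,1/(1-\theta))$, which leads to $\int_0^t r^{-\theta}\big(\int_0^1(p_r(x,z)-p_r(y,z))^2\,dz\big)^{1-\theta}dr\le C|x-y|^{1-\theta}$, after which Kolmogorov goes through with $p>2/(1-\theta)$. Thus your strategy is salvageable, but the sublinearity $\theta<1$ is essential for the spatial estimate, contrary to your closing remark.
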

\begin{proof}
As in \eqref{fact2}, we use the factorization method to write
\begin{align*}
    J_n(t) = \frac{\sin\alpha\pi}{\pi}J^{\alpha-1}(J_{\alpha} \sigma(u_n))(t).
\end{align*}
According to the fact (see Proposition 8.4 in \cite{MR1207136}) that the embedding
\begin{align*}
    J^{\alpha-1}: L^p([0,T],L^2([0,1])) \to C([0,T],L^2([0,1]))
\end{align*}
is compact provided $\frac{1}{p} < \alpha$. Thus to prove the tightness of $J_n$ in $C([0,T],L^2([0,1]))$, it suffices to prove that for some $\frac{1}{p} < \alpha$, $\forall\ \epsilon > 0$, there exists a constant $M > 0$, such that
\begin{align*}
    \sup_n\mathbb{P}(\|J_{\alpha}\sigma\|_{L^p([0,T],L^2)}  > M)  < \epsilon.
\end{align*}
By the Chebyshev inequality, it is sufficient to show
\begin{align*}
    \sup_n\int_{0}^T\mathbb{E}[\|J_{\alpha}\sigma(s)\|_{L^2}^p]ds = \sup_n\int_{0}^{T}\mathbb{E}\left[\left\|\int_{0}^s(s-r)^{-\alpha}P_{s-r}\sigma(u_n(r))dW_r\right\|_{L^2}^p\right]ds < \infty.
\end{align*}
We use the BDG inequality and \eqref{eq108} to deduce
\begin{align*}
    &\sup_n\int_{0}^{T}\mathbb{E}\left[\left\|\int_{0}^s(s-r)^{-\alpha}P_{s-r}\sigma(u_n(r))dW_r\right\|_{L^2}^p\right]ds \\ \leq& \sup_n\int_{0}^{T}\mathbb{E}\left(\int_{0}^s(s-r)^{-2\alpha}\|P_{t-r}\sigma(u_n(r))\|_{HS(L^2,L^2)}^2dr\right)^{\frac{p}{2}}ds\\
    \leq& C_p\sup_n\int_{0}^T\mathbb{E}\left(\int_{0}^s(s-r)^{-2\alpha-\frac{1}{2}}\|\sigma(u_n(r))\|_{L^2}^2dr\right)^{\frac{p}{2}}ds \\=& C_p\sup_{n}\int_{0}^T\left\|\int_{0}^s(s-r)^{-2\alpha-\frac{1}{2}}\|\sigma(u_n(r))\|_{L^2}^2dr\right\|_{L^{\frac{p}{2}}(\Omega)}^{\frac{p}{2}}ds \\ \leq&C_p \sup_{n}\int_{0}^T
    \left(\int_{0}^s(s-r)^{-2\alpha - \frac{1}{2}}(\mathbb{E}\|\sigma(u_n(r))\|_{L^2}^p)^{\frac{2}{p}}dr\right)^{\frac{p}{2}}ds \\ \leq&C_p \sup_{n}\int_{0}^T\left(\sup_{r \in [0,T]}\mathbb{E}\|\sigma(u_n(r))\|_{L^2}^p\right)\left(\int_{0}^s(s-r)^{-2\alpha-\frac{1}{2}}dr\right)^{\frac{p}{2}}ds\\ \leq& C_p\sup_{n}\int_{0}^T\left(\sup_{r \in [0,T]}\mathbb{E}\|u_n(r)\|_{L^2}^p+1\right)\left(\int_{0}^s(s-r)^{-2\alpha-\frac{1}{2}}dr\right)^{\frac{p}{2}}ds.
\end{align*}
If we take $p > 4$ so that $\frac{1}{p}  < \alpha < \frac{1}{4}$, then the right-hand side of the above inequality is finite by Theorem 5.1. So we complete the proof.
\end{proof}
Now we prove the existence of probabilistically weak solutions to equation \eqref{eq5}.
\begin{proof}[Proof of Theorem 2.4]
 Let $C([0,T] {\times}[0,1],\mathbb{R})$ be the space of real-valued continuous functions on $[0,T] \times [0,1]$. Set $\Upsilon := C([0,T],L^2([0,1])) {\times} C([0,T]{\times}[0,1],\mathbb{R})$. By Lemma 5.2 and Lemma 5.3, the family of laws $\mathcal{L}(u_n,W)$ of the random vectors $(u_n,W)$ is tight in $\Upsilon$, where $W$ (with a slight abuse of notations) is the Brownian sheet corresponding to the space-time noise. By Prokhorov's theorem and  Skorokhod's representation theorem, there exist a new probability space $(\widetilde{\Omega},\widetilde{\mathcal{F}},\widetilde{\mathbb{P}})$, a sequence of of $\Upsilon$-valued random vectors $\{(\tilde{u}_n,\widetilde{W}_n)\}$ and a $\Upsilon$-valued random vector $(\tilde{u},\widetilde{W})$ such that $\mathcal{L}(\tilde{u}_n,\widetilde{W}_n) = \mathcal{L}(u_n,W)$, $\mathcal{L}(\widetilde{W}) = \mathcal{L}(W)$ and
\begin{align}\label{guo2}
\sup_{t\leq T}\|\tilde{u}_n - \tilde{u}\|_{L^2} + \sup_{t \geq 0,x\in[0,1]}|\widetilde{W}_n(t,x) - \widetilde{W}(t,x)| \xrightarrow{n \to \infty}0.
\end{align}
Let $\widetilde{\mathcal{F}}_t$ be the filtration satisfying the usual conditions and generated by
\begin{align*}
    \{\tilde{u}(s),\widetilde{W}(s,x):s \leq t,x\in \mathbb{R}\}.
\end{align*}
Then $\widetilde{W}$ is an $\{\widetilde{\mathcal{F}}_t\}$ -Brownian sheet. And let $\widetilde{\mathcal{F}}_t^n$ be the filtration satisfying the usual conditions and generated by
\begin{align*}
    \{\tilde{u}_n(s),\widetilde{W}_n(s,x):s \leq t,x\in \mathbb{R}\}.
\end{align*}
Then $\widetilde{W}_n$ is an $\{\widetilde{\mathcal{F}}_{t}^n\}$-Brownian sheet.
From the equation satisfied by the random vector $(u_n,W)$, it follows that
\begin{align}\label{guo1}
\begin{aligned}
    \tilde{u}_n(t,x) &= P_t\tilde{u}_0(x) + \int_{0}^t\int_{0}^1p_{t-s}(x,y)b_n(\tilde{u}_n(s,y))dsdy \\&+ \int_{0}^t\int_{0}^1p_{t-s}(x,y)\sigma(\tilde{u}_n(s,y))\widetilde{W}_n(ds,dy),
\end{aligned}
\end{align}
where for each $n \geq 1$, $\widetilde{W}_n(ds,dy)$ is the space-time white noise corresponding to the Brownian sheet $\widetilde{W}_n$. We will let $n$ tend to $\infty$ in \eqref{guo1} to show that $(\tilde{u},\widetilde{W})$ is a solution to equation \eqref{eq5}. First, for any $t \leq T$, we have
\begin{align*}
    &\sup_{n}\mathbb{E}\left[\int_{0}^t\int_{0}^1\|p_{t-s}(x,y)\|_{L_x^2}^{\frac{3}{2}}|b_n(\tilde{u}_n(s,y))|^{\frac{3}{2}}dyds\right]\\ \leq
    &c_1^{\frac{3}{2}}\sup_{n}\mathbb{E}\left[\int_{0}^t\int_{0}^1\|p_{t-s}(x,y)\|_{L^2_x}^{\frac{3}{2}}|\tilde{u}_n(s,y)|^{\frac{3}{2}}\log_{+}^{\frac{3}{2}}|\tilde{u}_n(s,y)|dyds\right] + C_{T,c_2}\\ \leq &
    2c_1^{\frac{3}{2}}\sup_{n}\mathbb{E}\left[\int_{0}^t\int_{0}^1\|p_{t-s}(x,y)\|_{L^2_x}^{\frac{3}{2}}|\tilde{u}_n(s,y)|^2dyds\right] + C_{T,c_2} \\ \leq &
    C_{T,c_1}\sup_{n}\mathbb{E}\left[\sup_{s \leq T}\|\tilde{u}_n(s)\|_{L^2}^2\right] + C_{T,c_2} < \infty,
\end{align*}
where we have used the inequality
\begin{align*}
\log_{+}|u| \leq 2|u|^{\frac{1}{3}}, \quad \forall\ u \in \mathbb{R}.
\end{align*}
So the sequence
$\{\|p_{t-s}(x,y)\|_{L^2_x}|b_n(\tilde{u}_n(s,y))|\}_{n=1}^{\infty}$
is uniformly integrable on $\Omega\times [0,t] \times[0,1]$. Hence by \eqref{wang3}, \eqref{guo2} and Vitali's convergence theorem, we have for any $t \leq T$,
\begin{align*}
    &\mathbb{E}\left[\left\|\int_{0}^t\int_{0}^1p_{t-s}(x,y)b_n(\tilde{u}_n(s,y))dyds - \int_{0}^t\int_{0}^1p_{t-s}(x,y)b(\tilde{u}(s,y))dyds\right\|_{L^2_x}\right] \\ \leq &
    \mathbb{E}\left[\int_{0}^t\int_{0}^1\|p_{t-s}(x,y)\|_{L^2_x}|b_n(\tilde{u}_n(s,y)) - b(\tilde{u}(s,y))|dyds\right] \xrightarrow{n \to \infty} 0.
\end{align*}
To prove the convergence of the stochastic integral term in \eqref{guo1}, we will show that $f_n(s,y):=p_{t-s}(x,y)\sigma(\tilde{u}_n(s,y))$, $f(s,y):=p_{t-s}(x,y)\sigma(\tilde{u}(s,y))$ satisfy the conditions in Theorem 7.1 in the appendix. First it is easy to see that $f_n$, $f$ satisfy \eqref{kk1} and \eqref{kk2} respectively. And for any $1 < \eta < \min\left\{\frac{1}{\theta},\frac{3}{2}\right\}$, we have
\begin{align*}
    &\sup_{n}\mathbb{E}\left[\int_{0}^t\int_{0}^1\int_{0}^1|p_{t-s}(x,y)\sigma(\tilde{u}_n(s,y))|^{2\eta}dxdyds\right] \\\leq &\sup_{n}2^{2\eta-1}d_1^{2\eta}\mathbb{E}\left[\int_{0}^t\int_{0}^1\int_{0}^1|p_{t-s}(x,y)|^{2\eta}|\tilde{u}_n(s,y)|^{2\theta\eta}dxdyds\right] \\&+2^{2\eta-1} d_2^{2\eta}\mathbb{E}\left[\int_{0}^t\int_{0}^1\int_{0}^1|p_{t-s}(x,y)|^{2\eta}dxdyds \right] \\\leq&
    \sup_{n}C_{T,d_1,\eta}\mathbb{E}\left[\sup_{s \leq T}\int_{0}^1(|\tilde{u}_n(s,y)|^2+1)dy \right] + C_{T,d_2,\eta} \\ =& \sup_{n}C_{T,d_1,\eta}\mathbb{E}\left[\sup_{s \leq T}\|\tilde{u}_n(s)\|_{L^2}^2\right] + C_{T,d_1,d_2,\eta} < \infty,
\end{align*}
where we have used (H3) and the following fact,
\begin{align*}
    \int_{0}^t\int_{0}^1p_{t-s}(x,y)^{2\eta}dxds \leq C\int_{0}^t(t-s)^{-(\eta-\frac{1}{2})}ds < \infty.
\end{align*}
So the sequence $\{|p_{t-s}(x,y)\sigma(\tilde{u}_n(s,y))|^2\}_{n=1}^{\infty}$ is uniformly integrable on $\Omega\times[0,t]\times[0,1]\times[0,1]$. Hence by the continuity of $\sigma$ and Vitali's convergence theorem, we have for any $t \leq T$,
\begin{align*}
    &\mathbb{E}\left[\int_{0}^t\int_{0}^1\|p_{t-s}(x,y)\sigma(\tilde{u}_n(s,y)) - p_{t-s}(x,y)\sigma(\tilde{u}(s,y))\|_{L^2_x}^2dyds\right] \\ =&\mathbb{E}\left[\int_{0}^t\int_{0}^1\int_{0}^1p_{t-s}(x,y)^2|\sigma(\tilde{u}_n(s,y))-\sigma(\tilde{u}(s,y))|^2dxdyds\right] \xrightarrow{n\to \infty}0.
\end{align*}
This implies that \eqref{xxx1} holds. So we can apply Theorem 7.1 to conclude that for any $ t \leq  T$,
\begin{align*}
    \int_{0}^t\int_{0}^1p_{t-s}(x,y)\sigma(\tilde{u}_n(s,y))\widetilde{W}_n(ds,dy) \xrightarrow{\mathbb{P}} \int_{0}^t\int_{0}^1p_{t-s}(x,y)\sigma(\tilde{u}(s,y))\widetilde{W}(ds,dy)
\end{align*}
in $L^2([0,1])$ as $ n\to \infty$. Taking $n \to \infty$ in \eqref{guo1}, we see that for any $t \leq T$,
\begin{align*}
    \tilde{u}(t) = P_{t}\tilde{u}_0 + \int_{0}^t\int_{0}^1p_{t-s}(\cdot,y)b(\tilde{u}(s,y))dsdy + \int_{0}^t\int_{0}^1p_{t-s}(\cdot,y)\sigma(\tilde{u}(s,y))\widetilde{W}(ds,dy).
\end{align*}
And by \eqref{guo2} we have $\tilde{u} \in C([0,T],L^2([0,1]))$. So $\tilde{u}$ is a probabilistically weak solution of \eqref{eq5}.
\end{proof}
\section{Pathwise uniqueness}
In this section, we prove the pathwise uniqueness of solutions to \eqref{eq5} and hence obtain a unique probabilistically strong solution.
\begin{proof}[Proof of Theorem 2.5]
Let $u$, $v$ be two solutions of equation \eqref{eq5}. Then we have
\begin{align*}
    u(t,x) - v(t,x) &= \int_{0}^t\int_{0}^1p_{t-s}(x,y)(b(u(s,y)) - b(v(s,y)))dyds \\&+ \int_{0}^t\int_{0}^1p_{t-s}(x,y)(\sigma(u(s,y)) - \sigma(v(s,y)))W(ds,dy).
\end{align*}
Set $z(t,x) := u(t,x) - v(t,x)$. Then
\begin{align*}
    \|z(t)\|_{L^2}  &\leq \left\|\int_{0}^t\int_{0}^1p_{t-s}(x,y)|b(u(s,y)) - b(v(s,y))|dyds\right\|_{L_x^2}
    \\ &+
    \left\|\int_{0}^t\int_{0}^1p_{t-s}(x,y)(\sigma(u(s,y) ) -\sigma(v(s,y)) )W(ds,dy) \right\|_{L^2_x} \\ &:= I(t) + J(t).
\end{align*}
By (H2),
\begin{align*}
    I(t) &\leq \left\|c_1\int_{0}^t\int_{0}^1p_{t-s}(x,y)|u(s,y) - v(s,y)|\log_+\frac{1}{|u(s,y - v(s,y))|}dyds\right\|_{L_x^2}  \\ &+
    \left\|c_2\int_{0}^t\int_{0}^1p_{t-s}(x,y)\log_+(|u(s,y)|\vee |v(s,y)|)|u(s,y) - v(s,y))|dyds\right\|_{L_x^2}  \\ &+
    \left\|c_3\int_{0}^t\int_{0}^1p_{t-s}(x,y)|u(s,y) - v(s,y )|dyds\right\|_{L_x^2}  \\& :=
    I_1(t) + I_{2}(t) + I_{3}(t).
\end{align*}
We first estimate $I_3(t)$ as follows. By the contractivity of the heat semigroup $P_t$, we have
\begin{align}\label{estimate1}
    I_3(t) &\leq c_3\int_{0}^t\left\|P_{t-s}(|u(s)-v(s)|)\right\|_{L^2}ds \leq c_3\int_{0}^t\|z(s)\|_{L^2}ds.
\end{align}
Next, we consider $I_2(t)$. By H\"older's inequality, we have
\begin{align*}
    I_2(t) &\leq c_2\int_{0}^t\left\{\int_{0}^1\left[\int_{0}^1p_{t-s}(x,y)\log_+(|u(s,y)|\vee |v(s,y)|)|u(s,y) - v(s,y)|dy\right]^2dx\right\}^{\frac{1}{2}}ds \\&\leq
    c_2\int_{0}^t\left\{\int_{0}^1\left[\int_{0}^1p_{t-s}(x,y)|z(s,y)|^2dy\right]\left[\int_{0}^1p_{t-s}(x,y)\log_+^2(|u(s,y)|\vee |v(s,y)|)dy\right]dx\right\}^{\frac{1}{2}}ds \\ &\leq
    c_2\int_{0}^t\|z(s)\|_{L^2}\left\{\sup_{x\in[0,1]}\int_{0}^1p_{t-s}(x,y)\log_+^2(|u(s,y)|\vee |v(s,y)|)dy\right\}^{\frac{1}{2}}ds.
\end{align*}
According to \eqref{eq4}, we see that
\begin{align*}
&\sup_{x\in[0,1]}\int_{0}^1p_{t-s}(x,y)\log_+^2(|u(s,y)|\vee |v(s,y)|)dy  \\\leq&
\left\{\log_{+}C + \frac{1}{4}\log_{+}\left(\frac{1}{t-s}\right) + \log_{+}(\|u(s)\|_{L^2} + \|v(s)\|_{L^2})\right\}^2.
\end{align*}
Let $\tau_M := \inf\{t \geq0:\|u(t)\|_{L^2} \geq M\} \land \inf\{t \geq0:\|v(t)\|_{L^2} \geq M\}$. For $ t < \tau_M$, we have
\begin{align}\label{estimate2}
\begin{aligned}
    I_2(t) &\leq C\int_{0}^t\|z(s)\|_{L^2}ds + C\log_{+}(M)\int_{0}^t\|z(s)\|_{L^2} \\&+ \frac{c_2}{4}\int_{0}^t\|z(s)\|_{L^2}\log_{+}\left(\frac{1}{t-s}\right)ds.
\end{aligned}
\end{align}
Next, for any $\delta \in (0,\frac{\sqrt{2}}{2}e^{-1})$, we let $\tau^{\delta} = \inf \{t > 0: \|z(t)\|_{L^2} \geq \delta\}$. We now estimate $I_1(t)$ for $ t \leq \tau^{\delta}$,
\begin{align*}
    I_1(t) &= c_1\left\|\int_{0}^t\int_{0}^1p_{t-s}(x,y)|u(s,y) - v(s,y)|\log_+\left(\frac{1}{|u(s,y) - v(s,y)| }\right)dyds\right\|_{L^2_x} \\ &\leq
    c_1\int_{0}^t\left\|\int_{0}^1p_{t-s}(x,y)|u(s,y) - v(s,y)|\log_+\left(\frac{1}{|u(s,y) - v(s,y)|}\right)dy\right\|_{L^2_x}ds \\ &=
    c_1\int_{0}^t\left\{\int_{0}^1\left[\int_{0}^1p_{t-s}(x,y)|u(s,y)-v(s,y)|\log_+\left(\frac{1}{|u(s,y) - v(s,y)|}\right)dy\right]^2dx\right\}^{\frac{1}{2}}ds \\ &\leq
    c_1\int_{0}^t\left\{\int_{0}^1\int_{0}^1p_{t-s}(x,y)|u(s,y) - v(s,y)|^2\log_+^2\left(\frac{1}{|u(s,y) - v(s,y)|}\right)dydx\right\}^{\frac{1}{2}}ds \\ & \leq
    c_1\int_{0}^t\left\{\int_{0}^1|u(s,y) - v(s,y)|^2\log_+^2\left(\frac{1}{|u(s,y) - v(s,y)|}\right)dy\right\}^{\frac{1}{2}}ds.
\end{align*}
Let $f(z) = z(\log \frac{1}{z})^2,$ for $z\in (0,1]$.
Then $f'(z) = (2+\log z)\log z > 0$ when $z \in (0,e^{-2})$, so $f(z)$ increases for $z \in (0,e^{-2})$.
We know $f''(z) = \frac{2}{z}(1 + \log z )  < 0$ when $z < e^{-1}$. So $f$ is concave in $(0,e^{-1})$. Moreover,
\begin{align}\label{wang1}
    |f(z)| \leq 4|z|, \quad \text{for} \ z\in [e^{-2},1].
\end{align}
Next, we fix $s > 0$ and set $\Gamma := \{y \in[0,1]:|z(s,y)|^2 < e^{-2}\}$. In view of $\log_{+}\frac{1}{z} = 0$ for $z > 1$, we have
\begin{align}\label{wesnesday1}
\begin{aligned}[b]
    I_1(t) &\leq  c_1\int_{0}^t\left\{\int_{0}^1|u(s,y) - v(s,y)|^2\log_+^2\left(\frac{1}{|u(s,y) - v(s,y)|}\right)dy\right\}^{\frac{1}{2}}ds \\&\leq
    \frac{c_1}{2}\int_{0}^t\left\{\int_{\{y\in[0,1]:e^{-2} \leq |z(s,y)|^2 < 1\}}f(z^2(s,y))dy + \int_{\Gamma}f(z^2(s,y))dy\right\}^{\frac{1}{2}}ds.
\end{aligned}
\end{align}
Then by the Chebyshev inequality, we know that the Lebesgue measure $|\Gamma|$ satisfies
\begin{align*}
    |\Gamma| \geq 1 - e^2\|z(s)\|_{L^2}^2.
\end{align*}
This implies that $\forall\ t \leq \tau^{\delta}$,
\begin{align*}
    \frac{1}{|\Gamma|}\|z(t)\|_{L^2}^2 \leq \frac{\|z(t)\|_{L^2}^2}{1 - e^2\|z(t)\|_{L^2}^2} \leq \frac{1}{\delta^{-2} - e^{2}} \leq e^{-2}.
\end{align*}
By the Jensen inequality, \eqref{wang1} and the fact that $f$ is concave and increasing in $(0,e^{-2})$, \eqref{wesnesday1} becomes
\begin{align}\label{estimate3}
    \notag I_1(t) &\leq \frac{c_2}{2}\int_{0}^t\left\{\int_{\{y\in[0,1]:\frac{2}{e}\leq|z(s,y)|^2<1\}}4|z(s,y)|^2dy + |\Gamma|\int_{\Gamma}f(z^2(s,y))\frac{dy}{|\Gamma|}\right\}^{\frac{1}{2}}ds \\ \notag&\leq
    \frac{c_1}{2}\int_{0}^t\left\{4\|z(s)\|_{L^2}^2 + |\Gamma| f(\int_{\Gamma}z^2(s,y)\frac{dy}{|\Gamma|})\right\}^{\frac{1}{2}}ds \\ \notag&\leq
    \frac{c_1}{2}\int_{0}^t\left\{4\|z(s)\|_{L^2}^2 + |\Gamma|f(\frac{1}{|\Gamma|}\|z(s)\|_{L^2}^2)\right\}^{\frac{1}{2}}ds
    \\ \notag&\leq c_1\int_{0}^t\|z(s)\|_{L^2}ds + \frac{c_1}{2}\int_{0}^t\sqrt{|\Gamma|}\frac{\|z(s)\|_{L^2}}{\sqrt{|\Gamma|}}\log_+\left(\frac{|\Gamma|}{\|z(s)\|_{L^2}^2}\right)ds \\ & \leq
    c_1\int_{0}^t\|z(s)\|_{L^2}ds + c_1\int_{0}^t\|z(s)\|_{L^2}\log_+{\frac{1}{\|z(s)\|_{L^2}}}ds.
\end{align}
Combining \eqref{estimate1}, \eqref{estimate2} and \eqref{estimate3} together yields that for any $t \leq \tau^{\delta}_M := \tau_M \wedge \tau^{\delta}$,
\begin{align*}
    I(t) &\leq C\int_{0}^t\|z(s)\|_{L^2}ds + c_2\int_{0}^t\left(C_M + \frac{1}{4}\log_+\left(\frac{1}{t-s}\right)\right)\|z(s)\|_{L^2}ds \\&+ c_1\int_{0}^t\|z(s)\|_{L^2}ds + c_1\int_{0}^t\|z(s)\|_{L^2}\log_+\frac{1}{\|z(s)\|_{L^2}}ds.
\end{align*}
Hence there exists a constant $c > 0$ such that for $t \leq \tau^{\delta}_M$,
\begin{align*}
    \|z(t)\|_{L^2} \leq& \sup_{s \leq t}J(s) +  C_M\int_{0}^t\|z(s)\|_{L^2}ds + C\int_{0}^t(t-s)^{-\frac{1}{4}}\|z(s)\|_{L^2}ds  \\ +& C\int_{0}^t\|z(s)\|_{L^2}\log_+\frac{1}{\|z(s)\|_{L^2}}ds.
\end{align*}
Then, by Lemma 3.2, we have
\begin{align*}
    \|z(t)\|_{L^2} \leq C_T\sup_{s\leq t}J(s) + C_T \int_{0}^t\|z(s)\|_{L^2}\log_+\frac{1}{\|z(s)\|_{L^2}}ds.
\end{align*}
This implies
\begin{align*}
    \sup_{t\leq r\land \tau_M^\delta}\|z(t)\|_{L^2}  &\leq
    C_{M,T}\sup_{t \leq r\land \tau_M^\delta}J(t) + C_{M,T}\int_{0}^r\sup_{\rho \leq s\land \tau_M^\delta}\left(\|z(\rho)\|_{L^2}\log_+\frac{1}{\|z(\rho)\|_{L^2}}\right)ds .
\end{align*}
Denote
\begin{align*}
    F(r) = \mathbb{E}\left[\sup_{t\leq r \land \tau_M^\delta}\|z(t)\|_{L^2}\right].
\end{align*}
Let $h(x) = x\log_{+}\frac{1}{x}$ for $x \in (0,e^{-1})$. Then $h'(x) = \log\frac{1}{x} - 1 \geq 0$, this implies that $h(x)$ increases in $(0,e^{-1})$. And $h''(x) = -\frac{1}{x} \leq 0$, so $h(x)$ is concave in $(0,e^{-1})$.
Then we have
\begin{align*}
F(r) \leq  C_{M,T}\mathbb{E}\left[\sup_{t \leq r\land \tau_M^\delta}J(t)\right] + C_{M,T}\int_{0}^rF(s)\log_+\frac{1}{F(s)}ds.
\end{align*}
According to Lemma 4.2, for any $\ \epsilon > 0$, there exists a constant $C_\epsilon$ such that
\begin{align*}
    \mathbb{E}\left[\sup_{t \leq r\land \tau_M^\delta}J(t)\right] &= \mathbb{E}\left[\sup_{t \leq r\land \tau_M^\delta}\left\|\int_{0}^t\int_{0}^1p_{t-s}(x,y)[\sigma(u(s,y)) - \sigma(v(s,y))]W(ds,dy)\right\|_{L^2_x}\right]
    \\ &\leq
    \epsilon F(r) + C_{\epsilon,T}\int_{0}^rF(s)ds.
\end{align*}
So we get that
\begin{align*}
F(r) \leq \epsilon F(r) + C_{\epsilon,M,T}\int_{0}^rF(s)ds + C_{M,T}\int_{0}^rF(s)\log_+\frac{1}{F(s)}ds.
\end{align*}
According to Theorem 3.2, we conclude that $F \equiv 0$ on $[0,T]$. This implies
\begin{align*}
\mathbb{E}\left[\sup_{t \leq T\land \tau_M^\delta}\|u(t) - v(t)\|_{L^2}\right] = 0.
\end{align*}
Let $M \to \infty$, then $\tau_M \to \infty$. By Fatou's lemma, we have
\begin{align*}
\mathbb{E}\left[\sup_{t \leq T\land \tau^\delta}\|u(t) - v(t)\|_{L^2}\right] = 0.
\end{align*}
So $\sup_{t \leq T\land \tau^\delta}\|u(t) - v(t)\|_{L^2} = 0$, $\mathbb{P}-a.s$. This implies $\tau^\delta \geq T  $ $a.s.$ and
\begin{align*}
\sup_{t \leq T}\|u(t) - v(t)\|_{L^2} = 0.
\end{align*}
The proof of pathwise uniqueness is completed.
\end{proof}
\section{Appendix}
In this section, we will prove a result on the convergence of stochastic integral with respect to space-time white noise, which will be used in the proof of existence of probabilistically weak solutions. The result is similar to Lemma 4.3 in \cite{MR4904095} and Lemma 2.1 in \cite{MR2812364}.
\begin{Theorem}
    Assume that $H$ is a separable Hilbert space. Let $(\Omega,\mathcal{F,\mathbb{P}})$ be a probability space with a sequence of filtrations $\{\mathcal{F}_t^n\}_{t \geq 0}$ and $\{\mathcal{F}_t\}_{t \geq 0}$ satisfying the usual conditions. For each $n \in \mathbb{N}$, let $W_n$ be a Brownian sheet with respect to $\mathcal{F}_t^n$ and let $W$ be a Brownian sheet with respect to $\mathcal{F}_t$. Assume that for each $n \in \mathbb{N}$, $f_n$ is $H$-valued $\mathcal{F}_t^n$-adapted measurable processes such that
    \begin{align}
    \begin{aligned}\label{kk1}
    \sup_{n}\int_{0}^T\int_{0}^1\|f_n(s,y)\|_{H}^2dyds < \infty,\quad a.s.
    \end{aligned}
    \end{align}
    And $f$ is a $H$-valued $\mathcal{F}_t$-adapted measurable process such that
    \begin{align}\label{kk2}
    \begin{aligned}
    \int_{0}^T\int_{0}^1\|f(s,y)\|_{H}^2dyds < \infty, \quad a.s.
    \end{aligned}
    \end{align}
    If
    \begin{align}\label{eq10}
        \sup_{t\leq T,x\in[0,1]}|W_n(t,x) - W(t,x)| \xrightarrow[n \to \infty]{\mathbb{P}} 0 ,\quad
    \end{align}
    and
    \begin{align}\label{xxx1}
        \int_{0}^T\int_{0}^1\|f_{n}(s,y) - f(s,y)\|_{H}^2dyds \xrightarrow[n \to \infty]{\mathbb{P}}0.
    \end{align}
    Then we have
    \begin{align}\label{wang5}
        \sup_{t \leq T}\left\|\int_{0}^t\int_{0}^1f_n(s,y)W_n(ds,dy) - \int_{0}^t\int_{0}^1f(s,y)W(ds,dy)\right\|_{H} \xrightarrow[n \to \infty]{\mathbb{P}}0,
    \end{align}
    where $W_n(ds,dy)$ and $W(ds,dy)$ are the space-time white noise corresponding to the Brownian sheet $W_n$ and $W$ respectively.
\end{Theorem}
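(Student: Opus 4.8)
The whole difficulty is that $(f,W)$ lives on the filtration $\{\mathcal F_t\}$ while $(f_n,W_n)$ lives on the \emph{different} filtrations $\{\mathcal F_t^n\}$, so one cannot directly apply the BDG inequality to the difference $\int f_n\,dW_n-\int f\,dW$. The plan is to compare the two integrals by routing through \emph{time--simple} integrands, for which the stochastic integral is an explicit measurable functional of the driving Brownian sheet, and then to invoke the uniform convergence \eqref{eq10}.

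First I would localize, then approximate. Using Cauchy--Schwarz together with \eqref{xxx1} one gets $\sup_{t\le T}\big|\int_0^t\!\!\int_0^1(\|f_n\|_H^2-\|f\|_H^2)\,dy\,ds\big|\to0$ in probability, so for a.e.\ level $R$ the first times the increasing processes $\int_0^\cdot\!\!\int_0^1\|f_n\|_H^2$ hit $R$ converge in probability to the corresponding time for $f$; together with \eqref{kk1} and \eqref{kk2} (which make early crossings uniformly improbable in $n$), a routine stopping--time argument reduces the proof to the case $\int_0^T\!\!\int_0^1\|f_n\|_H^2\le R$ and $\int_0^T\!\!\int_0^1\|f\|_H^2\le R$ a.s.\ for a deterministic $R$; then \eqref{xxx1} upgrades, by dominated convergence, to $\|f_n-f\|_{\mathcal H}\to0$ where $\mathcal H:=L^2(\Omega\times[0,T]\times[0,1];H)$. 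Now let $A_m$ replace a process by its average over the preceding interval of the dyadic mesh of $[0,T]$ with step $2^{-m}T$: it preserves adaptedness to either filtration, is a contraction on $\mathcal H$, and converges strongly to the identity on $\mathcal H$. Since $\{f_n\}_n\cup\{f\}$ is compact in $\mathcal H$, $A_m\to\mathrm{Id}$ uniformly on it, so $\varepsilon_m:=\sup_n\|A_mf_n-f_n\|_{\mathcal H}+\|A_mf-f\|_{\mathcal H}\to0$. Write $f_n^{(m)}=A_mf_n=\sum_j g_{n,j}\mathbf{1}_{(t_{j-1},t_j]}$ and $f^{(m)}=A_mf=\sum_j g_j\mathbf{1}_{(t_{j-1},t_j]}$; for each fixed $m$ one also has $g_{n,j}\to g_j$ in $L^2(\Omega;L^2([0,1];H))$ as $n\to\infty$.

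Then, for $\epsilon,\eta>0$ given, I would split
\begin{align*}
\int_0^t f_n\,dW_n-\int_0^t f\,dW&=\underbrace{\int_0^t (f_n-f_n^{(m)})\,dW_n}_{\mathrm I_n^m}+\underbrace{\int_0^t (f^{(m)}-f)\,dW}_{\mathrm{III}^m}\\
&\quad+\underbrace{\Big(\int_0^t f_n^{(m)}\,dW_n-\int_0^t f^{(m)}\,dW\Big)}_{\mathrm{II}_n^m}.
\end{align*}
Because $f_n-f_n^{(m)}$ and $f^{(m)}-f$ are honest adapted integrands, the BDG inequality for $H$-valued integrals against space--time white noise (with $\|g(s)\|_{HS}^2=\int_0^1\|g(s,y)\|_H^2\,dy$ as in \eqref{eq108}) gives $\mathbb E\sup_{t\le T}\|\mathrm I_n^m\|_H^2+\mathbb E\sup_{t\le T}\|\mathrm{III}^m\|_H^2\le C\varepsilon_m^2$, so I fix $m$ so large that these are below $\epsilon/3$ with probability $>1-2\eta/3$, uniformly in $n$. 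For the remaining term with $m$ fixed, I expand the coefficients in the sine basis $e_k=\sqrt2\sin(k\pi\cdot)$ and write $\int_0^t f_n^{(m)}\,dW_n=\sum_j\sum_{k\ge1}\langle g_{n,j},e_k\rangle_{L^2([0,1])}\big(\beta^k_{n,t\wedge t_j}-\beta^k_{n,t\wedge t_{j-1}}\big)$, where $\beta^k_{n,t}=\int_0^t\!\!\int_0^1 e_k(y)\,W_n(ds,dy)$ and the $k$-series converges in $L^2(\Omega;H)$; likewise for $f^{(m)},W$. Truncating the $k$-sum at level $K$ leaves, by orthogonality and compactness of $\{g_{n,j}\}_n\cup\{g_j\}$ in $L^2(\Omega;L^2([0,1];H))$, an error uniformly small in $n$; the truncated object is a \emph{finite} sum of products $\langle g_{n,j},e_k\rangle\,(\beta^k_{n,\cdot\wedge t_j}-\beta^k_{n,\cdot\wedge t_{j-1}})$. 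Since $e_k(0)=e_k(1)=0$, integration by parts gives $\sup_{t\le T}|\beta^k_{n,t}-\beta^k_t|\le\|e_k'\|_{L^1}\sup_{t\le T,\,y}|W_n(t,y)-W(t,y)|\to0$ in probability by \eqref{eq10}, and $g_{n,j}\to g_j$ in probability, so this finite sum converges to its $W$-counterpart uniformly in $t$ in probability; hence $\mathbb P(\sup_{t\le T}\|\mathrm{II}_n^m\|_H>\epsilon/3)<\eta/3$ for large $n$. Combining the three estimates and letting $\eta\downarrow0$ proves \eqref{wang5}.

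I expect the main obstacle to be twofold: making the simple--process approximation accurate \emph{uniformly in} $n$ — which is precisely why the compactness of $\{f_n\}\cup\{f\}$ in $\mathcal H$ together with the uniform boundedness and strong convergence of the averaging operators $A_m$ are needed — and then the simultaneous passage to the limit in $\mathrm{II}_n^m$, where the integrand \emph{and} the noise change together; this is handled by truncating the basis expansion so that only finitely many products of probabilistically convergent factors survive, each increment $\beta^k_{n,\cdot}$ being a continuous functional of the Brownian sheet by integration by parts against the smooth boundary--vanishing basis functions.
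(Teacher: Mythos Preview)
Your argument is essentially correct, but it takes a genuinely different route from the paper. The paper's proof is much shorter: it rewrites the space--time stochastic integral as an integral of a Hilbert--Schmidt operator $F_n(s)\in HS(L^2([0,1]),H)$, $F_n(s)h=\int_0^1 f_n(s,y)h(y)\,dy$, against the cylindrical Wiener process $\mathbf W_n(s)=\sum_k W_n^k(s)e_k$, checks that $\|F_n-F\|_{HS}^2=\int_0^1\|f_n-f\|_H^2\,dy$ so that \eqref{xxx1} gives convergence of the operators, observes (exactly as you do, by integration by parts against $e_k$) that $\sup_{t\le T}|W_n^k(t)-W^k(t)|\to0$ in probability from \eqref{eq10}, and then simply \emph{cites} Lemma~4.3 of \cite{MR4904095} to conclude. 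In other words, the paper reduces to an existing convergence lemma for stochastic integrals with respect to cylindrical Wiener processes; you instead reprove (a version of) that lemma from scratch via localization, dyadic time--averaging, and basis truncation. Both arguments share the same key analytic observation---that $\beta_{n}^k(t)=-\int_0^1 W_n(t,x)e_k'(x)\,dx$, so uniform convergence of the sheets yields convergence of the coordinate Brownian motions---but you package the rest by hand. Your approach buys self-containment; the paper's buys brevity.

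One caveat: your localization step is a bit compressed. The claim that stopping each $f_n$ at its own $\{\mathcal F_t^n\}$-stopping time $\tau_n^R$ and $f$ at $\tau^R$ still yields $\tilde f_n\to\tilde f$ in $\mathcal H=L^2(\Omega\times[0,T]\times[0,1];H)$ requires controlling the cross terms $f_n\mathbf 1_{(\tau^R\wedge\tau_n^R,\tau_n^R]}$ and $f\mathbf 1_{(\tau^R\wedge\tau_n^R,\tau^R]}$; your argument via convergence of the hitting times and uniform convergence of the running integrals $\int_0^\cdot\!\int_0^1\|f_n\|_H^2$ is the right idea, but a referee would likely ask you to spell it out (or, more simply, to work on the high-probability event $\{\sup_n\int_0^T\!\int_0^1\|f_n\|_H^2+\int_0^T\!\int_0^1\|f\|_H^2\le R\}$ on which all the stopping times exceed $T$ and the stopped integrands coincide with the original ones). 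This is routine, not a genuine gap.
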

\begin{proof}
    Let $e_k(x) = \sqrt{2}\sin(k\pi x)$, which constitutes a set of orthonormal basis of $L^2([0,1])$. Then for each $k \geq 1$,
    \begin{align*}
        W_n^k(t)  = \int_{0}^t\int_{0}^1e_k(y)W_n(ds,dy) = -\int_{0}^1W_n(t,x)e'_k(x)dx
    \end{align*}
    is a one-dimensional standard Brownian motion. And similarly,
    \begin{align*}
        W^k(t) = \int_{0}^t\int_{0}^1e_k(y)W(ds,dy)= -\int_{0}^1W(t,x)e'_k(x)dx
    \end{align*}
    is also a one-dimensional standard Brownian motion for each $k \geq 1$. From \eqref{eq10}, we have
    \begin{align}\label{+2}
        \sup_{t \leq T}|W^k_n(t) - W^k(t)| \leq C\sup_{t \leq T}\int_{0}^1|W_n(t,x) - W(t,x)|dx \xrightarrow[n \to \infty]{\mathbb{P}}0.
    \end{align}
    Note that
    \begin{align*}
        \int_{0}^t\int_{0}^1f_n(s,y)W_n(ds,dy) &= \sum_{k=1}^\infty\int_{0}^t\left(\int_{0}^1f_n(s,y)e_k(y)dy\right)dW_n^k(s) \\&=\int_{0}^tF_n(s)dW_n(s),
    \end{align*}
    where $W_n(s) = \sum_{k=1}^{\infty}W_n^k(s)e_k$ is the corresponding cylindrical Wiener process on $L^2([0,1])$, and $F_n(s)$ is an Hilbert-Schmidt operator from $L^2([0,1])$ to $H$ and defined by
    \begin{align*}
        F_n(s)(g) := \int_{0}^1f_n(s,y)g(y)dy, \quad \forall\ g \in L^2([0,1]).
    \end{align*}
    Let $\{h_j\}_{j=1}^{\infty}$ be an orthonormal basis of $H$ and define $h_j\otimes e_k := h_j\langle e_k,\cdot\rangle_{L^2}$  for any $k,j \geq1$, then $h_j \otimes e_k$ is an orthonormal basis of $HS(L^2,H)$ (see Proposition B.0.7 in \cite{MR3410409}). Then the Hilbert-Schmidt norm of $F_n(s)$ can be computed as follows:
    \begin{align}\label{wang2}
    \begin{aligned}[b]
        \|F_n(s)\|_{HS(L^2,H)}^2 &= \sum_{k,j=1}^{\infty}\langle F_n(s),h_j \otimes e_k\rangle^2_{HS(L^2,H)} \\&= \sum_{k,j=1}^{\infty}\langle h_j,F_n(s)(e_k)\rangle_H^2 \\&= \sum_{j=1}^{\infty}\sum_{k=1}^{\infty}\left(\int_{0}^1\langle f_n(s,y),h_j\rangle_He_k(y)dy\right)^2 \\&=
        \sum_{j=1}^{\infty}\int_{0}^1\langle f_n(s,y),h_j\rangle_H^2dy \\&=
        \int_{0}^1\|f_n(s,y)\|_H^2dy,
    \end{aligned}
    \end{align}
    Similarly, we can define $F(s)$ and $W(s)$ and have
    \begin{align*}
        \int_{0}^t\int_{0}^1f(s,y)W(ds,dy) = &\sum_{k=1}^{\infty}\int_{0}^t\left(\int_{0}^1f(s,y)e_{k}(y)dy\right)dW^{k}(s) \\=& \int_{0}^tF(s)dW(s).
    \end{align*}
    By \eqref{xxx1} and \eqref{wang2}, we have
    \begin{align}\label{+3}
        \int_{0}^T\|F_n(s)-F(s)\|_{HS(L^2,H)}^2ds = \int_{0}^T\int_{0}^1\|f_n(s,y)-f(s,y)\|_{H}^2dyds \xrightarrow[n \to \infty]{\mathbb{P}}0.
    \end{align}
    As the conditions of Lemma 4.3 in \cite{MR4904095} are fulfilled by \eqref{+2} and \eqref{+3}, it follows from Lemma 4.3 in \cite{MR4904095} that \eqref{wang5} holds.
\end{proof}
\vskip 0.3cm
	\noindent {\large{\bf{{Acknowledgement}}}}

This work is partially supported by the National Key R\&D Program of China (No. 2022
YFA1006001), the National Natural Science Foundation of China (No. 12131019, 12571158, 12371151, 11721101), and the Fundamental Research Funds for the Central Universities(No. WK3470000031, WK0010000081).
\bibliographystyle{plain}
\bibliography{ref.bib}

\begin{thebibliography}{10}

\bibitem{MR3745304}
Matthieu Alfaro and R\'emi Carles.
\newblock Superexponential growth or decay in the heat equation with a logarithmic nonlinearity.
\newblock {\em Dyn. Partial Differ. Equ.}, 14(4):343--358, 2017.

\bibitem{MR4904095}
Marco Bagnara, Mario Maurelli, and Fanhui Xu.
\newblock No blow-up by nonlinear {I}t\^o{} noise for the {E}uler equations.
\newblock {\em Electron. J. Probab.}, 30:Paper No. 81, 29, 2025.

\bibitem{MR426670}
Iwo Bia\l~ynicki Birula and Jerzy Mycielski.
\newblock Nonlinear wave mechanics.
\newblock {\em Ann. Physics}, 100(1-2):62--93, 1976.

\bibitem{MR3813596}
R\'emi Carles and Isabelle Gallagher.
\newblock Universal dynamics for the defocusing logarithmic {S}chr\"odinger equation.
\newblock {\em Duke Math. J.}, 167(9):1761--1801, 2018.

\bibitem{MR1961346}
Sandra Cerrai.
\newblock Stochastic reaction-diffusion systems with multiplicative noise and non-{L}ipschitz reaction term.
\newblock {\em Probab. Theory Related Fields}, 125(2):271--304, 2003.

\bibitem{MR3263450}
Hua Chen, Peng Luo, and Gongwei Liu.
\newblock Global solution and blow-up of a semilinear heat equation with logarithmic nonlinearity.
\newblock {\em J. Math. Anal. Appl.}, 422(1):84--98, 2015.

\bibitem{MR3327559}
Hua Chen and Shuying Tian.
\newblock Initial boundary value problem for a class of semilinear pseudo-parabolic equations with logarithmic nonlinearity.
\newblock {\em J. Differential Equations}, 258(12):4424--4442, 2015.

\bibitem{MR4904500}
Le~Chen, Mohammud Foondun, Jingyu Huang, and Michael Salins.
\newblock Global solution for superlinear stochastic heat equation on {$\mathbb{R}^d$} under {O}sgood-type conditions.
\newblock {\em Nonlinearity}, 38(5):Paper No. 055026, 25, 2025.

\bibitem{MR4607649}
Le~Chen and Jingyu Huang.
\newblock Superlinear stochastic heat equation on {$\mathbb{R}^d$}.
\newblock {\em Proc. Amer. Math. Soc.}, 151(9):4063--4078, 2023.

\bibitem{chen2025stochasticheatequationnonlocally}
Le~Chen, Jingyu Huang, and Wenxuan Tao.
\newblock A stochastic heat equation with non-locally lipschitz coefficients.
\newblock arXiv:2507.23637, 2025.

\bibitem{MR1115140}
Carmen Cort\'azar and Manuel Elgueta.
\newblock Unstability of the steady solution of a nonlinear reaction-diffusion equation.
\newblock {\em Houston J. Math.}, 17(2):149--155, 1991.

\bibitem{MR1207136}
Giuseppe Da~Prato and Jerzy Zabczyk.
\newblock {\em Stochastic equations in infinite dimensions}, volume~44 of {\em Encyclopedia of Mathematics and its Applications}.
\newblock Cambridge University Press, Cambridge, 1992.

\bibitem{MR3909975}
Robert~C. Dalang, Davar Khoshnevisan, and Tusheng Zhang.
\newblock Global solutions to stochastic reaction-diffusion equations with super-linear drift and multiplicative noise.
\newblock {\em Ann. Probab.}, 47(1):519--559, 2019.

\bibitem{dalang2025stochasticpartialdifferentialequations}
Robert~C. Dalang and Marta Sanz-Solé.
\newblock Stochastic partial differential equations, space-time white noise and random fields.
\newblock arXiv:2402.02119, 2025.

\bibitem{MR2812364}
Arnaud Debussche, Nathan Glatt-Holtz, and Roger Temam.
\newblock Local martingale and pathwise solutions for an abstract fluids model.
\newblock {\em Phys. D}, 240(14-15):1123--1144, 2011.

\bibitem{MR1207304}
C.~Donati-Martin and \'E. Pardoux.
\newblock White noise driven {SPDE}s with reflection.
\newblock {\em Probab. Theory Related Fields}, 95(1):1--24, 1993.

\bibitem{MR2516340}
Julian Fern\'andez~Bonder and Pablo Groisman.
\newblock Time-space white noise eliminates global solutions in reaction-diffusion equations.
\newblock {\em Phys. D}, 238(2):209--215, 2009.

\bibitem{MR4797376}
Mohammud Foondun, Davar Khoshnevisan, and Eulalia Nualart.
\newblock Instantaneous everywhere-blowup of parabolic {SPDE}s.
\newblock {\em Probab. Theory Related Fields}, 190(1-2):601--624, 2024.

\bibitem{MR4177371}
Mohammud Foondun and Eulalia Nualart.
\newblock The {O}sgood condition for stochastic partial differential equations.
\newblock {\em Bernoulli}, 27(1):295--311, 2021.

\bibitem{MR214914}
Hiroshi Fujita.
\newblock On the blowing up of solutions of the {C}auchy problem for {$u\sb{t}=\Delta u+u\sp{1+\alpha }$}.
\newblock {\em J. Fac. Sci. Univ. Tokyo Sect. I}, 13:109--124, 1966.

\bibitem{han2024supportsolutionsnonlinearstochastic}
Beom-Seok Han, Kunwoo Kim, and Jaeyun Yi.
\newblock On the support of solutions to nonlinear stochastic heat equations.
\newblock arXiv:2407.06827, 2024.

\bibitem{MR4881026}
R.~Kavin and Ananta~K. Majee.
\newblock L\'evy driven stochastic heat equation with logarithmic nonlinearity: well-posedness and large deviation principle.
\newblock {\em Appl. Math. Optim.}, 91(2):Paper No. 51, 48, 2025.

\bibitem{MR1056055}
Howard~A. Levine.
\newblock The role of critical exponents in blowup theorems.
\newblock {\em SIAM Rev.}, 32(2):262--288, 1990.

\bibitem{MR4805051}
Yue Li, Shijie Shang, and Jianliang Zhai.
\newblock Large deviation principle for stochastic reaction-diffusion equations with superlinear drift on {$\mathbb{R}$} driven by space-time white noise.
\newblock {\em J. Theoret. Probab.}, 37(4):3496--3539, 2024.

\bibitem{MR3410409}
Wei Liu and Michael R\"ockner.
\newblock {\em Stochastic partial differential equations: an introduction}.
\newblock Universitext. Springer, Cham, 2015.

\bibitem{MR2264255}
Li~Ma.
\newblock Gradient estimates for a simple elliptic equation on complete non-compact {R}iemannian manifolds.
\newblock {\em J. Funct. Anal.}, 241(1):374--382, 2006.

\bibitem{MR4866619}
Tianyi Pan, Shijie Shang, and Tusheng Zhang.
\newblock Large deviations of stochastic heat equations with logarithmic nonlinearity.
\newblock {\em Potential Anal.}, 62(2):439--463, 2025.

\bibitem{rosen1969dilatation}
Gerald Rosen.
\newblock Dilatation covariance and exact solutions in local relativistic field theories.
\newblock {\em Physical Review}, 183(5):1186, 1969.

\bibitem{MR4486234}
Michael Salins.
\newblock Existence and uniqueness of global solutions to the stochastic heat equation with superlinear drift on an unbounded spatial domain.
\newblock {\em Stoch. Dyn.}, 22(5):Paper No. 2250014, 30, 2022.

\bibitem{MR4491446}
Michael Salins.
\newblock Global solutions to the stochastic reaction-diffusion equation with superlinear accretive reaction term and superlinear multiplicative noise term on a bounded spatial domain.
\newblock {\em Trans. Amer. Math. Soc.}, 375(11):8083--8099, 2022.

\bibitem{salins2025global}
Michael Salins.
\newblock Global solutions to the stochastic heat equation with superlinear accretive reaction term and polynomially growing multiplicative white noise coefficient.
\newblock {\em Stochastics and Partial Differential Equations: Analysis and Computations}, pages 1--24, 2025.

\bibitem{MR4362370}
Shijie Shang and Tusheng Zhang.
\newblock Stochastic heat equations with logarithmic nonlinearity.
\newblock {\em J. Differential Equations}, 313:85--121, 2022.

\bibitem{MR4674636}
Shijie Shang and Tusheng Zhang.
\newblock Global well-posedness to stochastic reaction-diffusion equations on the real line {$\mathbb{R}$} with superlinear drifts driven by multiplicative space-time white noise.
\newblock {\em Electron. J. Probab.}, 28:Paper No. 166, 29, 2023.

\bibitem{MR2425752}
Yunyan Yang.
\newblock Gradient estimates for a nonlinear parabolic equation on {R}iemannian manifolds.
\newblock {\em Proc. Amer. Math. Soc.}, 136(11):4095--4102, 2008.

\end{thebibliography}
\end{document}